\theoremstyle{definition}
\newtheorem{sa}{Theorem}[section]
\renewcommand{\thesa}{\arabic{section}.\arabic{sa}}
\newtheorem{de}[sa]{Definition}
\newtheorem{rp}[sa]{Repetition}
\newtheorem{lm}[sa]{Lemma}
\newtheorem{pr}[sa]{Proposition}
\newtheorem{rem}[sa]{Remark}
\newtheorem{ex}[sa]{Example}
\newtheorem{com}[sa]{Comment}
\newtheorem{qu}[sa]{Question}
\newtheorem{con}[sa]{Construction}
\newtheorem{nota}[sa]{Notation}
\newcommand{\Z}{\mathbb{Z}}
\newcommand{\Q}{\mathbb{Q}}
\newcommand{\N}{\mathbb{N}}
\newcommand{\R}{\mathbb{R}}
\newcommand{\Set}{\mathbf{Set}}
\newcommand{\s}{\mathbf{s}}
\newcommand{\Top}{\mathbf{Top}}
\newcommand{\cgda}{\mathbf{cgda}}
\newcommand{\del}{\partial}
\newcommand{\id}{\mathrm{id}}
\newcommand{\Hom}{\mathrm{Hom}}
\newcommand{\width}{\mathrm{width}}
\newcommand{\union}{\cup}
\newcommand{\intersection}{\cap}
\newcommand{\bigunion}{\bigcup}
\newcommand{\bigintersection}{\bigcap}
\newcommand{\ev}{\mathrm{ev}}
\newcommand{\Fill}{\mathrm{Fill}}
\newcommand{\fil}{\mathrm{fill}}
\newcommand{\bigslant}[2]{{\raisebox{.2em}{$#1$}\left/\raisebox{-.2em}{$#2$}\right.}}
\newcommand{\vol}{\operatorname{vol}}
\newcommand{\rk}{\operatorname{rk}}
\newcommand{\defeq}{\vcentcolon=}
\DeclareMathOperator*{\im}{\mathrm{im}}
\DeclareMathOperator*{\inte}{\mathrm{int}}
\numberwithin{equation}{section}
\title{Algebraic filling inequalities and cohomological width}
\author{Meru Alagalingam}
\begin{document}
\maketitle
\begin{abstract}
In his work on singularities, expanders and topology of maps, Gromov showed, using isoperimetric inequalities in graded algebras, that every real valued map on the \mbox{$n$-torus} admits a fibre whose homological size is bounded below by some universal constant depending on $n$. He obtained similar estimates for maps with values in finite dimensional complexes, by a Lusternik--Schnirelmann type argument. 

We describe a new homological filling technique which enables us to derive sharp lower bounds in these theorems in certain situations. This partly realizes a programme envisaged by Gromov. 

In contrast to previous approaches our methods imply similar lower bounds for maps defined on products of higher dimensional spheres.
\end{abstract}
\section{Introduction}
This paper is profoundly inspired by \cite{Gromov2009} and \cite{Gromov2010}, in which, among others, the following two theorems were shown.
\begin{sa}[{{\cite[pp.~424]{Gromov2010}}}]\label{gromovtorus}
Let $k<\frac{n}{2}$ and let $T^n$ denote the $n$-dimensional torus. Every continuous map $f\colon T^n\to\R$ admits a point $y\in\R$ such that the rank of the restriction homomorphism satisfies \[\rk\left[H^k(T^n)\to H^k(f^{-1}(y))\right]\geq\left(1-\frac{2k}{n}\right){n\choose k}\text{.}\]
\end{sa}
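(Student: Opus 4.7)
The plan is to reduce Theorem~\ref{gromovtorus} to an algebraic isoperimetric inequality in the exterior algebra $\Lambda^{*}(\R^{n})\cong H^{*}(T^{n};\R)$, via a Mayer--Vietoris argument applied to sublevel and superlevel sets of $f$.

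For each $y\in\R$ let $U_y^{-}\defeq f^{-1}((-\infty,y])$ and $U_y^{+}\defeq f^{-1}([y,\infty))$, and consider the graded subspaces
\[A_y\defeq\ker\bigl[H^{*}(T^{n})\to H^{*}(U_y^{-})\bigr]\text{,}\qquad B_y\defeq\ker\bigl[H^{*}(T^{n})\to H^{*}(U_y^{+})\bigr]\text{.}\]
Since restriction is a ring homomorphism, both are graded ideals in $H^{*}(T^{n})$, and because $U_y^{-}\cup U_y^{+}=T^{n}$ one has $A_y\cdot B_y=0$. A chase of the Mayer--Vietoris exact sequence for the cover $\{U_y^{-},U_y^{+}\}$ (with intersection $f^{-1}(y)$), combined with excision and the long exact sequence of the pair $(U_y^{-},f^{-1}(y))$, shows
\[\ker\bigl[H^{k}(T^{n})\to H^{k}(f^{-1}(y))\bigr]\;\subseteq\;(A_y)_{k}+(B_y)_{k}\text{,}\]
so it suffices to find a $y$ with $\dim(A_y)_{k}+\dim(B_y)_{k}\leq\tfrac{2k}{n}\binom{n}{k}$.

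The heart of the argument --- and where I expect the main obstacle to lie --- is an algebraic filling inequality for $\Lambda^{*}(\R^{n})$ strong enough to produce such a $y$. Two elementary building blocks are: (i) a Poincar\'e duality bound, namely $\dim V+\dim W\leq\binom{n}{k}$ whenever $V\subseteq\Lambda^{k}$ and $W\subseteq\Lambda^{n-k}$ satisfy $V\cdot W=0$, coming from non-degeneracy of the pairing $\Lambda^{k}\otimes\Lambda^{n-k}\to\Lambda^{n}$; and (ii) a Kruskal--Katona--type expansion bound relating $\dim V$ to $\dim(V\cdot\Lambda^{n-2k})\subseteq\Lambda^{n-k}$ for $V\subseteq\Lambda^{k}$, ensuring that the ideal generated by $V$ is large in degree $n-k$ --- the extremal case being the principal ideal of a single decomposable $k$-form, where the ratio of dimensions is exactly $(n-k)/k$, accounting for the constant $2k/n$ in the conclusion. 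A purely pointwise application of (i) and (ii) at a single $y$ is not enough: $A_y$ could, for example, be the full augmentation ideal while $B_y$ collapses to the top class, obstructing any clean inequality at that specific $y$. The decisive step is therefore to orchestrate these pointwise estimates against the monotone family $\{(A_y,B_y)\}_{y\in\R}$ --- $A_y$ is non-increasing and $B_y$ is non-decreasing in $y$ --- and select $y$ at a ``balance point'' where the two ideals are simultaneously controlled in degree $k$. The hypothesis $k<n/2$ enters precisely to ensure $n-2k>0$, so that multiplication by $\Lambda^{n-2k}$ is the ``expanding'' map to which (ii) applies.

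Granted the algebraic inequality at such a $y$, the theorem follows at once:
\[\rk\bigl[H^{k}(T^{n})\to H^{k}(f^{-1}(y))\bigr]\;\geq\;\binom{n}{k}-\dim(A_y)_{k}-\dim(B_y)_{k}\;\geq\;\bigl(1-\tfrac{2k}{n}\bigr)\binom{n}{k}\text{.}\]
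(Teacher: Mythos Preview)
The paper does not contain its own proof of Theorem~\ref{gromovtorus}: the result is quoted from \cite[pp.~424]{Gromov2010} and used only as background and motivation. The paper merely remarks that ``its proof uses so-called \emph{isoperimetric inequalities in algebras}'' and then develops an entirely different filling technique to prove the new Theorems~\ref{estimate}, \ref{essential2}, and~\ref{higherdeg}. So there is no proof in the paper to compare your proposal against.

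That said, your outline is recognisably Gromov's own strategy (ideals $A_y,B_y$ from sub-/superlevel sets, $A_y\cdot B_y=0$, then an isoperimetric-type bound in $\Lambda^*\R^n$), so in spirit you are not proposing an alternative route but reconstructing the cited one. The proposal, however, is not a proof: you explicitly flag the decisive step --- the existence of a balance point $y$ at which $\dim(A_y)_k+\dim(B_y)_k\le\tfrac{2k}{n}\binom{n}{k}$ --- as something to be ``granted'', and the Kruskal--Katona/expansion ingredient (ii) is asserted rather than proved or even precisely formulated. Everything before and after that step is routine; the whole content of the theorem lies exactly in the inequality you leave open. A minor technical point: the Mayer--Vietoris/excision argument for the closed cover $\{U_y^-,U_y^+\}$ with intersection $f^{-1}(y)$ needs care (open thickenings and the continuity of \v{C}ech cohomology, as in Proposition~\ref{continuity} of the present paper, are the standard fix), but this is not the essential gap.
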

The second theorem is the so-called \emph{maximal fibre inequality}:
\begin{sa}[{{\cite[pp.~13]{Gromov2009}}},{{\cite[Section~4.2]{Gromov2010}}}]\label{cortorus}
Let $Y^q$ be a $q$-dimensional simplicial complex and $n\geq p(q+1)$. Every continuous map $f\colon T^n\to Y$ admits a point $y\in Y$ satisfying \[\rk\left[H^*T^n\to H^*(f^{-1}(y))\right]\geq 2^p\text{.}\]
\end{sa}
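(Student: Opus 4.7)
I would attempt a Lusternik--Schnirelmann-type covering argument, reducing Theorem~\ref{cortorus} to an algebraic filling inequality in the exterior cohomology ring $H^*(T^n)=\Lambda[e_1,\ldots,e_n]$.

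\textit{Step 1 (cover $Y$).} Since $\dim Y\le q$, triangulate $Y$, properly $(q+1)$-colour its vertices, and let $U_j$ be the union of the open stars of the colour-$j$ vertices. This produces a cover $Y=U_0\cup\cdots\cup U_q$ in which each $U_j$ is a disjoint union of small contractible open pieces $U_{j,\alpha}$.

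\textit{Step 2 (ideal relation).} Set $V_j=f^{-1}(U_j)$ and $J_j=\ker[H^*(T^n)\to H^*(V_j)]$. Identifying $J_j$ with the image of $H^*(T^n,V_j)\to H^*(T^n)$, the relative cup product yields
\[
H^*(T^n,V_0)\otimes\cdots\otimes H^*(T^n,V_q)\longrightarrow H^*(T^n,V_0\cup\cdots\cup V_q)=H^*(T^n,T^n)=0,
\]
so that $J_0\cdot J_1\cdots J_q=0$ in $H^*(T^n)$.

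\textit{Step 3 (algebraic filling).} I would then invoke an algebraic filling inequality in the exterior algebra: if $q+1$ ideals $J_0,\ldots,J_q\subset\Lambda[e_1,\ldots,e_n]$ satisfy $J_0\cdots J_q=0$, then some index $j$ has $\dim H^*(T^n)/J_j\ge 2^{\lceil n/(q+1)\rceil}$. Combined with $n\ge p(q+1)$, this delivers $\rk[H^*(T^n)\to H^*(V_j)]\ge 2^p$ for some $j$.

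\textit{Step 4 (localisation to a fibre).} Step 3 controls the rank on the disjoint union $V_j$ rather than on a single fibre. I would finish by iteratively refining the cover, letting the diameters of the pieces $U_{j,\alpha}$ shrink, and using compactness of $Y$ together with Čech continuity $H^*(f^{-1}(y))=\varinjlim_{W\ni y}H^*(f^{-1}(W))$ to extract a point $y\in Y$ every neighbourhood of which still satisfies the rank bound, whence $\rk[H^*(T^n)\to H^*(f^{-1}(y))]\ge 2^p$.

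\textit{Main obstacle.} The heart of the argument is Step 3, the algebraic filling inequality in the exterior algebra; this is precisely the new machinery advertised by the paper. The threshold $n=p(q+1)$ is sharp (a matching configuration is obtained by letting the $J_j$ be generated by disjoint blocks of $\lceil n/(q+1)\rceil$ degree-one generators), so the proof must be combinatorially tight, plausibly via a Kruskal--Katona or Koszul-type shadow argument in the exterior algebra. A secondary technical difficulty is Step 4: since $\rk[H^*(T^n)\to H^*(\bigsqcup_\alpha V_{j,\alpha})]$ can be much larger than any single $\rk[H^*(T^n)\to H^*(V_{j,\alpha})]$, isolating a single fibre needs a careful diagonal extraction and not mere pigeonhole.
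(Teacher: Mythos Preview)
This theorem is not proved in the paper; it is quoted from Gromov, and the paper only remarks that its proof ``uses a Lusternik--Schnirelmann type argument.'' Your outline is precisely such an argument, so the overall strategy is the right one.

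However, you misidentify Step~3 as ``the new machinery advertised by the paper.'' The paper's novelty is the cycle-space filling technique (Filling Lemma~\ref{fill} and its rational analogue Lemma~\ref{ratfill}) used for Theorems~\ref{estimate} and~\ref{higherdeg}; the algebraic step needed for Theorem~\ref{cortorus} is Gromov's and is in fact elementary. Partition $\{1,\dots,n\}$ into blocks $B_0,\dots,B_q$ of size at least $p$ and set $\omega_j=\prod_{i\in B_j}e_i$. Since $\omega_0\cdots\omega_q=e_1\cdots e_n\neq0$ while $J_0\cdots J_q=0$, some $\omega_{j_0}\notin J_{j_0}$. The map $\Lambda[e_i:i\in B_{j_0}]\to H^*(T^n)/J_{j_0}$ is then injective (every nonzero ideal of a finite exterior algebra contains its top class), so $\dim H^*(T^n)/J_{j_0}\ge 2^{|B_{j_0}|}\ge 2^p$; this also accounts for the degreewise refinement~\eqref{purely} mentioned in the introduction.

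The difficulty you flag in Step~4 is then an artefact of having stated Step~3 only as a dimension bound. The argument above produces a \emph{single class} $\omega_{j_0}\notin J_{j_0}=\bigcap_\alpha J_{j_0,\alpha}$, so $\omega_{j_0}\notin J_{j_0,\alpha_0}$ for some component $\alpha_0$, and the same injectivity gives $\rk[H^*T^n\to H^*V_{j_0,\alpha_0}]\ge 2^p$ already on that one piece. Passing from a small piece to a point then requires only the \v{C}ech continuity argument of Proposition~\ref{continuity} (equivalently the mechanism behind Proposition~\ref{upper}), with no diagonal extraction. Incidentally, your proposed sharpness configuration does not satisfy $J_0\cdots J_q=0$: if $J_j$ is generated by the degree-one classes of block $B_j$, then $\prod_j e_{i_j}$ with $i_j\in B_j$ lies in $J_0\cdots J_q$ and is nonzero.
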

In the theorems above $H^*$ shall denote \v{C}ech cohomology with coefficients in $\Z$.

\begin{de}[Cohomological width]\label{cowidth}
Let $R$ be a coefficient ring such that the rank of a homomorphism between $R$-modules makes sense, e.g. $\Z$, $\Z_2$ or $\Q$. For every $y\in Y$ we can consider the rank of the \v{C}ech cohomology restriction homomorphism \[H^*(X;R)\to H^*(f^{-1}y;R)\text{.}\]
\begin{enumerate}[(i)]
\item For every continuous map $f\colon X\to Y$ the expressions
\begin{align*}
\width_*(f;R)\defeq &\max_{y\in Y}\rk\left[H^*(X;R)\to H^*(f^{-1}y;R)\right]\text{ and}\\
\width_k(f;R)\defeq &\max_{y\in Y}\rk\left[H^k(X;R)\to H^k(f^{-1}y;R)\right]
\end{align*}
are called the \emph{total} or \emph{degree $k$ cohomological width of $f$}.
\item For fixed topological spaces $X$ and $Y$ the minima
\begin{align*}
\width_*(X/Y;R)\defeq &\min_{f\in C(X,Y)}\width_*(f;R)\text{ and}\\
\width_k(X/Y;R)\defeq &\min_{f\in C(X,Y)}\width_k(f;R)
\end{align*}
where $C(X,Y)$ denotes the set of all continuous maps $f\colon X\to Y$ are called the \emph{total} or \emph{degree $k$ cohomological width of $X$ over $Y$}.
\end{enumerate}
\end{de}
For every continuous map $f\colon X\to Y$ the preimages of points are called the \emph{fibres of $f$} and $\width_k(f)$ gives a lower bound for the topological complexity of one fibre of $f$. The expression $\width_k(X/Y)$ measures the complexity of $X$ in terms of continuous maps to $Y$.

Up to now Theorems \ref{gromovtorus} and \ref{cortorus} have been essentially the only two inequalities about cohomological width. In this paper we will give new lower bounds for $\width_k(X/Y)$ where $X$ and $Y$ are fixed manifolds.

A careful analysis of the proof of Theorem \ref{cortorus} shows that this $y\in Y$ actually satisfies
\begin{align}\label{purely}
\rk\left[H^k(T^n)\to H^k(f^{-1}(y))\right]\geq{p\choose k}
\end{align}
for every $0\leq k\leq p$.

We can compare the different lower bounds, e.g. for $\width_k(T^{2p}/\R)$: Theorem \ref{gromovtorus} yields
\begin{align}\label{ungl1}
\width_k(T^{2p}/\R)\geq\left(1-\frac{k}{p}\right){2p\choose k}
\end{align}
whereas we get from Theorem \ref{cortorus} that
\begin{align}\label{ungl2}
\width_k(T^{2p}/\R)\geq{p\choose k}\text{.}
\end{align}
The bound (\ref{ungl1}) is significantly stronger than (\ref{ungl2}) but the latter holds for all $1$-dimensional target spaces $Y^1$, not just $Y^1=\R$.

When investigating $\width_k(X/Y)$ we will call the dimension of the target space $Y$ the \emph{codimension} of the cohomological width problem. Theorem \ref{gromovtorus} is a codimension $1$ result and its proof uses so-called \emph{isoperimetric inequalities in algebras}. Theorem \ref{cortorus} on the other hand is a result admitting target spaces $Y^q$ of arbitrary codimension $q\geq 1$. Its proof is far less geometric and uses a Lusternik--Schnirelmann type argument. This argument and isoperimetric inequalities in algebras have been the only known techniques to prove cohomological waist inequalities.

Using a certain \emph{filling argument in a space of $(n-q)$-cycles in $T^n$} -- which we will sketch in a moment -- we sharpen estimate (\ref{purely}) as follows.
\begin{sa}\label{estimate}
If $N^q$ is a manifold we have \[\width_1(T^n/N)=n-q\text{,}\] i.e. for every continuous $f\colon T^n\to N$ there exists a point $y\in N$ such that \[\rk\left[H^1(T^n)\to H^1(f^{-1}(y))\right]\geq n-q\text{.}\]
\end{sa}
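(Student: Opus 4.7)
The plan is to prove the bound in two halves: first the upper bound $\width_1(T^n/N) \leq n-q$ by exhibiting an explicit map, then the matching lower bound by the filling argument sketched in the introduction.

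\emph{Upper bound.} Write $T^n = T^{n-q} \times T^q$, fix a small coordinate ball $B \subset N$, and choose a smooth map $\mu: T^q \to B$ whose generic fibre is a finite set (e.g.\ the componentwise cosine map scaled into $B$). Set $f := \mu \circ \pi_{T^q}$. A generic fibre of $f$ is then a disjoint union of finitely many copies of $T^{n-q}$. The $n-q$ coordinate $1$-cocycles pulled from the $T^{n-q}$-factor restrict to a diagonal copy of $H^1(T^{n-q}) \cong \Q^{n-q}$ inside $H^1(f^{-1}(y);\Q)$, while the $q$ coordinate cocycles pulled from $T^q$ restrict to zero. Hence $\rk[H^1(T^n) \to H^1(f^{-1}(y))] = n-q$ for every regular $y$.

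\emph{Lower bound.} Suppose towards contradiction that some continuous $f: T^n \to N$ satisfies $\width_1(f;\Q) \leq n-q-1$; then at each $y \in N$ the kernel $V_y := \ker[H^1(T^n;\Q) \to H^1(f^{-1}(y);\Q)]$ has dimension at least $q+1$. By the continuity of \v{C}ech cohomology, every $y$ has an open neighbourhood $U_y \subset N$ on which the classes of $V_y$ already vanish on $H^1(f^{-1}(U_y);\Q)$. Interpret this geometrically: a class $\alpha \in V_y$, viewed as a map $\tilde\alpha: T^n \to S^1$, becomes null\-homotopic on $f^{-1}(U_y)$ and thus lifts to a real-valued function $\Phi_\alpha: f^{-1}(U_y) \to \R$. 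Picking $q+1$ linearly independent $\alpha_1, \ldots, \alpha_{q+1} \in V_y$ yields a joint lift $\Phi_y := (\Phi_{\alpha_1}, \ldots, \Phi_{\alpha_{q+1}}): f^{-1}(U_y) \to \R^{q+1}$ whose common regular level sets are $(n-q-1)$-cycles in $f^{-1}(U_y)$ Poincar\'e-dual in $T^n$ to $\alpha_1 \cup \cdots \cup \alpha_{q+1}$.

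The core of the argument is the global filling step, carried out in the space $\mathcal{Z}_{n-q}(T^n;\Q)$ of $(n-q)$-cycles in $T^n$. Slicing by $f$ defines a map $s_f: N \to \mathcal{Z}_{n-q}(T^n;\Q)$, $z \mapsto [f^{-1}(z)]$; by an Almgren-type isomorphism $\pi_q(\mathcal{Z}_{n-q}(T^n;\Q)) \cong H_n(T^n;\Q) \cong \Q$, the homotopy class of $s_f$ detects the fundamental class $[T^n]$. The local lifts $\Phi_y$ confine each fibre $f^{-1}(z)$ for $z \in U_y$ to a codimension-$(q+1)$ subfamily of cycles. Patching these local restrictions over a cover $\{U_y\}$ of $N$ via a Mayer--Vietoris style gluing in $\mathcal{Z}_{n-q}(T^n;\Q)$---available because the codimension $q+1$ of the local families just exceeds the $q$ dimensions of parameter freedom in $N$---produces a null\-homotopy of $s_f$, contradicting the non\-triviality of its Almgren class.

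The chief obstacle is this last step: translating the pointwise algebraic vanishing $\dim V_y \geq q+1$ into a coherent geometric filling of $s_f$. This is exactly the promised ``filling inequality in a space of $(n-q)$-cycles'', and its force relies on the codimension $q+1$ of the local kernel data barely outweighing the $q$ dimensions of $N$. Formalising the gluing---most plausibly through an algebraic filling inequality in a tractable chain model for $\mathcal{Z}_{n-q}(T^n)$, such as a symmetric-product or free-abelian-group model, then transferred back to the geometric cycle space---will be the technically hardest part of the proof.
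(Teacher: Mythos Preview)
Your overall strategy matches the paper's: both arguments work in a space of $(n-q)$-cycles in $T^n$, observe that slicing by $f$ produces a nontrivial $q$-dimensional family there (detecting $[T^n]$), and derive a contradiction by building a null-homotopy/cone of that family under the hypothesis $\rk[H^1(T^n)\to H^1(f^{-1}y)]<n-q$ for all $y$. Your upper-bound construction is fine.

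However, your lower-bound argument is not a proof; it is a programme with the decisive step missing, and you say so yourself. Two concrete issues:

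\begin{itemize}
\item \emph{The local-to-global gluing is the whole theorem.} You obtain, for each $y$, a lift $\Phi_y\colon f^{-1}(U_y)\to\R^{q+1}$. What this really encodes is that $f^{-1}(U_y)\to T^n$ factors through a covering $T^{n-q-1}\times\R^{q+1}\to T^n$, and \emph{that} is why the $(n-q)$-cycle $[f^{-1}(z)]$ bounds there: $H_{\ge n-q}(T^{n-q-1})=0$. But the subspaces $V_y\subset H^1(T^n)$ --- hence the coverings --- vary with $y$, and nothing forces them to agree on overlaps $U_y\cap U_{y'}$. A ``Mayer--Vietoris style gluing'' in $\mathcal{Z}_{n-q}(T^n)$ does not come for free: you must control the \emph{fillings} on overlaps, then fillings of those on triple overlaps, and so on up to dimension $q$. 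This is exactly what the paper does, and it does so not by patching over an open cover but by working over a \emph{triangulation} of $N$ and coning the canonical simplicial cycle $Z(f,\mathcal{T})$ inductively over the skeleta, using a precise Filling Lemma (covering-space factorisation through a wedge of tori of dimension $<n-q$) at every stage. Without an analogue of that inductive mechanism your argument has no content beyond the $q=0$ case.
\item \emph{Coefficients.} You pass freely between $V_y\subset H^1(T^n;\Q)$ and maps $\tilde\alpha\colon T^n\to S^1$; the latter needs integral classes. The paper is careful here: the rank hypothesis and the Filling Lemma are formulated over $\Z$ (covering spaces and Hurewicz), while the cycle-space coning is done over $\Z_2$ to avoid orientability assumptions on $N$. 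Your sketch should at least clear denominators before invoking $S^1$-lifts.
\end{itemize}

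Finally, the sentence about ``common regular level sets of $\Phi_y$ being $(n-q-1)$-cycles Poincar\'e-dual to $\alpha_1\cup\cdots\cup\alpha_{q+1}$'' is a distraction: those level sets play no role in null-homotoping $s_f$. What matters is the factorisation through $T^{n-q-1}\times\R^{q+1}$, which kills $H_{\ge n-q}$ and hence lets each fibre cycle bound --- the paper's Filling Lemma, stated and used precisely.
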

Any projection $f\colon T^n\to T^q$ shows that this inequality is sharp. A slightly more general construction shows equality can happen for every $q$-dimensional target manifold $N$.

It is the first non-trivial sharp evaluation of cohomological width, slightly improves the best known lower bound for $\width_1(T^n/\R)$ coming from Theorem \ref{gromovtorus} and generalises to arbitrary source manifolds that need not be tori but can be arbitrary essential $m$-manifolds with fundamental group $\Z^n$ (cf. Theorem \ref{essential2}).

Gromov asked (cf. \cite[Section 4.1 and Section 4.13 D]{Gromov2010}) whether one could use minimal models to prove cohomological waist inequalities. Using rational homotopy theory we could indeed prove the following estimate about cartesian powers of higher-dimensional spheres.

\begin{sa}\label{higherdeg}
Let $p\geq 3$ be odd and $n\leq p-2$. Consider $M=(S^p)^n$ or any simply connected, closed manifold of dimension $pn$ with the rational homotopy type $(S^p)_{\Q}^n$. For any orientable manifold $N^q$ we have \[\width_p(M/N;\Q)\geq n-q\text{.}\]
\end{sa}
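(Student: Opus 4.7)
The plan is to adapt the filling technique behind Theorem~\ref{estimate} to the higher-degree, simply connected setting, working throughout with Sullivan's rational minimal models. Since $p$ is odd, any simply connected closed manifold with the rational homotopy type of $(S^p)^n$ is intrinsically formal, so its polynomial-form algebra is quasi-isomorphic to $(\Lambda V,0)$ with $V=\Q\langle x_1,\ldots,x_n\rangle$ concentrated in degree $p$. In particular $H^*(M;\Q)=\Lambda V$ is the exterior algebra on $n$ degree-$p$ generators, and the top class $x_1\wedge\cdots\wedge x_n\in H^{pn}(M;\Q)$ generates the top cohomology by Poincar\'e duality.

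Arguing by contradiction I would suppose $\width_p(f;\Q)\leq n-q-1$. Then for every $y\in N$ the subspace $K_y\defeq\ker\bigl[H^p(M;\Q)\to H^p(f^{-1}(y);\Q)\bigr]$ of $V$ has dimension at least $q+1$. Since \v{C}ech cohomology of a fibre is a direct limit over open neighbourhoods of $y$, this assignment is upper semicontinuous: each $K_y$ already lies in the kernel of restriction to $f^{-1}(U)$ for some open $U\ni y$, and compactness of $N$ gives a finite open cover by such neighbourhoods, each carrying a distinguished $(q+1)$-dimensional subspace of $V$.

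The heart of the argument is the filling step. In Theorem~\ref{estimate} this filling is carried out in a topologised space of $(n-q)$-cycles in $T^n$; here the analogous role is played by a parameter object $\mathcal{Z}$ built directly from the relative Sullivan model of $f$, into which the assignment $y\mapsto f^{-1}(y)$ gives a continuous map $\Phi\colon N\to\mathcal{Z}$ recording the cohomological restriction data above each point. The orientability of $N$ supplies a fundamental class $[N]$, and a homological filling of $\Phi_*[N]$ produces a cochain exhibiting $x_1\wedge\cdots\wedge x_n$ as a coboundary in $H^*(M;\Q)$, contradicting Poincar\'e duality. The hypothesis $n\leq p-2$ enters crucially here: because $H^*(M;\Q)$ is concentrated on the lattice $\{0,p,2p,\ldots,pn\}$ with empty intermediate degrees, the Sullivan bicomplex attached to $f$ has no cohomology off these diagonals, so the tower of obstructions to the filling collapses to a single computable class.

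The main obstacle is the precise construction of $\mathcal{Z}$ and $\Phi$ so that the geometric fibres of $f$ parametrise points of $\mathcal{Z}$ continuously while the degree-$p$ cohomological data is paired correctly with $(pn-p)$-dimensional cycles in $M$. Pinning down the role of the degree gap $n\leq p-2$ in killing all intermediate obstructions is the technical heart of the argument; once both ingredients are in place the contradiction with $x_1\wedge\cdots\wedge x_n\neq 0$ is immediate.
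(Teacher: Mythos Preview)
Your proposal is a strategy outline, not a proof, and it misdiagnoses where the hypothesis $n\leq p-2$ enters.

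You explicitly defer the construction of $\mathcal{Z}$ and $\Phi$, calling it ``the main obstacle'' --- but this \emph{is} the content of the theorem. In the paper the role of $\mathcal{Z}$ is played by the simplicial cycle space $cl^{np-q}C_*\bigl((S^p)_{\Q}^n;\Q\bigr)$, the role of $\Phi_*[N]$ by the canonical cycle $Z(f,\mathcal{T})$ built from preimages of a generic triangulation, and the cone is produced inductively over simplices via an explicit \emph{Rational Filling Lemma}: for any map $k\colon K\to (S^p)_{\Q}^n$ with $\rk H^p(k;\Q)<n-q$, one factors the minimal-model map $\bigl(\bigwedge[x_1,\ldots,x_n],0\bigr)\to A_{PL}K$ through the free cdga $\bigl(\bigwedge[H^{p-1}K\oplus\im H^p(k^\sharp)],0\bigr)$ and spatially realises to obtain a relative CW pair $(\Fill(k),K)$ with $H_{\geq np-q}(\iota;\Q)=0$ and the degree-$p$ rank bound preserved. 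Without some concrete mechanism of this kind, ``fill $\Phi_*[N]$ and derive a contradiction'' is a restatement of the goal, not a proof step.

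More seriously, your account of $n\leq p-2$ is wrong. The concentration of $H^*(M;\Q)$ on multiples of $p$ holds for \emph{every} $n$ and carries no information here. The real mechanism is Diophantine and lives inside the filling model $\bigwedge\bigl[H^{p-1}K\oplus\im H^p(k^\sharp)\bigr]$, which has generators in degrees $p-1$ \emph{and} $p$: a monomial of degree $np-a$ with $0\leq a\leq q$ is a product of $\lambda$ generators of degree $p-1$ and $\mu$ of degree $p$, and the equation $\lambda(p-1)+\mu p=np-a$ has the unique nonnegative solution $(\lambda,\mu)=(a,n-a)$ precisely when $n\leq p-2$. This forces $\mu\geq n-q$, so each such monomial contains at least $n-q$ degree-$p$ factors, and the rank hypothesis on $H^p(k^\sharp)$ kills its image. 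For $n=p-1$ a top-degree class can be a pure product of $p$ generators of degree $p-1$ and the argument collapses. Any approach that ignores the degree-$(p-1)$ cohomology of the intermediate spaces $K$ will neither see nor use the condition $n\leq p-2$.
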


Consider the map \[f\colon\left(S^p\right)^n=\left(S^p\right)^{n-q}\times\left(S^p\right)^q\longrightarrow\left(S^p\right)^q\stackrel{g^q}{\longrightarrow}\R^q\] where the first map is a projection and the second one is the $q$-fold Cartesian power of a nonconstant map $g\colon S^p\to\R$. All fibres $f^{-1}(y)$ are of the form $\left(S^p\right)^{n-q}\times A^q\subset\left(S^p\right)^n$ for some proper subset $A\subsetneq S^p$. This proves at least the equality \[\width_p\left(M/N;\Q\right)=n-q\] for $M=\left(S^p\right)^n$.

We have not been able to remove the assumption $n \leq p-2$ in this theorem but suspect that this can be done. Theorems \ref{gromovtorus} and \ref{cortorus} can be adapted to show $\width_p\left(\left(S^p\right)^n/\R\right)\geq n-2$ and $\width_p\left(\left(S^p\right)^n/Y^q\right)\geq\frac{n}{q+1}$ but our bound is stronger.

The proofs of Theorems \ref{estimate} and \ref{higherdeg}, which admit target manifolds of arbitrary codimension $q\geq 1$, use a new technique that is inspired by the metric filling argument sketched below. Theorem \ref{higherdeg} is the first lower bound on $\width_p$ with $p>1$ that has been proven using this technique.

Recall the important and classic \emph{waist of the sphere inequality}:
\begin{sa}
Every (for simplicity smooth and generic) $\R^q$-valued map $f$ on the unit $n$-sphere admits a point $y\in\R^q$ such that the $(n-q)$-dimensional Hausdorff volume satisfies \[\vol_{n-q}f^{-1}(y)\geq\vol_{n-q}S^{n-q}\] where $S^{n-q}\subset S^n$ is the $(n-q)$-dimension equator in $S^n$.
\end{sa}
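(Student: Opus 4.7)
The plan is to follow Gromov's original strategy via min-max over the Almgren space of cycles combined with a spherical symmetrization argument. The guiding principle is that the coordinate projection $S^n \to \R^q$ is, up to rearrangement, the extremal example: its equatorial fibre $S^{n-q}$ should realise the minimal possible ``max-fibre volume.''

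First I would pass from the map $f$ to a family of cycles. Compactify $\R^q$ to $S^q$ by adjoining a point at infinity; since $f^{-1}(\infty)=\emptyset$, the assignment $y\mapsto[f^{-1}(y)]$ yields a continuous map
\[F\colon S^q\longrightarrow\mathcal{Z}_{n-q}(S^n;\Z_2)\]
into Almgren's space of mod-$2$ $(n-q)$-cycles, with $F(\infty)=0$. Almgren's fundamental isomorphism $\pi_q(\mathcal{Z}_{n-q}(S^n;\Z_2))\cong H_n(S^n;\Z_2)=\Z_2$ identifies homotopy classes of such sweepouts with the top homology of $S^n$, and one checks (by integrating the intersection pairing with $F$, or by the coarea formula) that $[F]$ is the nonzero generator, because the fibres sweep out all of $S^n$.

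Second, I would apply the Almgren--Pitts min-max principle to the homotopy class of $F$. This produces a stationary integral $(n-q)$-varifold $V$ in $S^n$ with
\[\vol_{n-q}V\;\leq\;\max_{y\in S^q}\vol_{n-q}f^{-1}(y)\]
and $V$ nontrivial, since $[F]\neq 0$. If one could show $\vol_{n-q}V\geq\vol_{n-q}S^{n-q}$ the theorem would follow at once.

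The hard step, and the real heart of Gromov's proof, is exactly that lower bound on the min-max width. I would carry it out by Gromov's \emph{pancake decomposition} / spherical symmetrization: iteratively decompose $S^n$ along level sets of carefully chosen spherical distance functions so that at each stage the fibres of $f$ are compared, slice by slice, with concentric spherical caps centred at poles of an equator. The symmetrization only decreases the maximal fibre volume, and at the end one is reduced to the genuine coordinate projection $S^n\to\R^q$, whose extremal fibre is the equator $S^{n-q}$. In a more modern presentation (Memarian, Klartag) the same conclusion is obtained by coupling the min-max varifold with the monotonicity-type inequality $\vol_{n-q}(V\cap B_r)\geq\vol_{n-q}(S^{n-q}\cap B_r)$ for geodesic balls. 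Either way, combining this lower bound with the min-max upper bound yields $\max_y\vol_{n-q}f^{-1}(y)\geq\vol_{n-q}S^{n-q}$, which is the statement. The pancake/symmetrization step is where essentially all the geometric work sits; the Almgren--Pitts machinery is standard once the non-triviality of $[F]$ has been verified.
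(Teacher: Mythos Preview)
Your outline is a reasonable high-level account of one route to the waist inequality, but it is \emph{not} the argument the paper sketches, and the paper's choice of argument is deliberate.

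The paper gives only a ``Proof scheme'' (explicitly so labelled, with references to Gromov 1983 and Guth), and that scheme is a \emph{filling/contradiction} argument, not a min-max argument. One assumes every fibre has volume strictly below $\vol_{n-q}S^{n-q}$, takes a sufficiently fine generic triangulation $\mathcal{T}$ of $\R^q$, and observes that the preimages of the top simplices assemble to a cycle representing $[S^n]$. One then invokes metric (Federer--Fleming type) isoperimetric filling inequalities in $S^n$: a $k$-cycle of small enough mass bounds a $(k+1)$-chain of controlled mass. Applying these inductively over the skeleta of $\mathcal{T}$ produces a chain bounding the fundamental cycle, contradicting $[S^n]\neq 0$. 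No stationary varifold, no Almgren--Pitts, no symmetrization.

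Your approach instead passes to $\mathcal{Z}_{n-q}(S^n)$, runs min-max to extract a stationary varifold $V$, and then needs the lower bound $\mathrm{mass}(V)\geq \vol_{n-q}S^{n-q}$. You propose to get that either by Gromov's pancake/symmetrization or by a monotonicity inequality. Two comments. First, Gromov's pancake argument is a direct argument on the \emph{sweepout} itself, not on a limiting stationary varifold; bolting it onto Almgren--Pitts is redundant, since once the symmetrization is done the inequality follows without ever producing $V$. Second, the statement ``symmetrization only decreases the maximal fibre volume'' is where all the difficulty hides and is not literally true as stated; Gromov's actual mechanism is considerably more delicate (convex partitions, centre-of-mass arguments, the ``pancake'' localisation). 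So your sketch is at the same level of roughness as the paper's, but points at a different and heavier body of machinery.

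The reason the paper presents the filling version is that \emph{this} is the template it then transplants to the cohomological setting: replace ``$(n-q)$-volume small'' by ``restriction rank small'', replace metric filling inequalities by the Filling Lemma, and replace the contradiction with $[S^n]\neq 0$ by the nonvanishing of the canonical class in $cl^{n-q}C_*(M)$. Your min-max/symmetrization route, while a legitimate path to the sphere waist theorem, does not suggest that translation.
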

Equality can happen e.g. if $f$ is the restriction of a linear projection $\R^{n+1}\to\R^q$.
\begin{proof}[Proof scheme (cf. {\cite[p.~134]{Gromov1983}}, \cite{Guth2014})]
We proceed by contradiction and assume that there is a smooth generic map $f\colon S^n\to\R^q$ such that every fibre $f^{-1}(y)$ satisfies \[\vol_{n-q}f^{-1}(y)<\vol_{n-q}S^{n-q}\text{.}\] Choose a generic and fine triangulation $\mathcal{T}$ of $\R^q$. Fine means that the preimage of every $k$-simplex of $\mathcal{T}$ has arbitrarily small $(n-q+k)$-volume. For the preimages of the vertices of $\mathcal{T}$ this holds by assumption, for the preimages of higher-dimensional simplices this can be achieved by subdivision. The sum of the preimages of the $q$-simplices represent the fundamental class $[S^n]\in H_n(S^n;\Z)$. But using certain metric filling inequalities for $(n-q+l)$-chains in $S^n$ we can inductively construct a cone of $[S^n]$. This contradicts the non-vanishing of the fundamental class $[S^n]$. Thus the assumption that all fibres $f^{-1}(y)$ can have arbitrarily small $(n-q)$-volume must have been false.
\end{proof}
We prove cohomological width inequalities by feeding this proof scheme with a new \mbox{\emph{cohomological filling inequality}} (cf. Filling Lemma \ref{fill}). This executes a plan that was indicated by Gromov in \cite[Section 4.13 D]{Gromov2010}.

This paper is organised as follows: in Section 2 we will define cohomological restriction kernels and their connection to cohomological width. The core ideas already appeard in \cite[Section 4.1]{Gromov2010} but only as rough sketches and we allow ourselves the captatio benevolentiae and give rigorous statements and proofs. In particular we give a complete proof that the waist functional only increases under uniform limits, which allows us to reduce waist inequalities to the case of generic maps. This is important for our and possible further treatment of the subject. In Section 3 we define the space $cl^{n-q}(M)$ of $(n-q)$-cycles in a manifold $M$ such that every continuous map $f\colon M\to N$ between manifolds induces a non-trivial element in the homology of $cl^{n-q}(M)$. We show cohomological filling inequalities and use all of these ingredients to prove our waist inequalities.

{\em Acknowledgements.} This paper arose from my Augsburg dissertation. I want to thank my advisor Bernhard Hanke, Mikhail Gromov, Larry Guth and the anonymous referee.

\section{Cohomological width and restriction kernels}
In this paper we will give lower bounds of $\width_k(X/Y)$ for various fixed manifolds $X$~and~$Y$. In Section 3 the proofs of these are given for generic maps $f\colon X\to Y$, e.g.~we will find a lower bound of $\width_k(f)$ for all smooth $f$ which intersect some smooth triangulation of $Y$ transversally. In this section we will show that the same lower bound will also hold for all continuous $f$ (cf. Proposition \ref{upper}). In other words it is sufficient to prove waist inequalities just for (in some sense) generic maps. This is motivated by a sentence in \cite[p.~417]{Gromov2010} about a quantity which ``may only \emph{increase} under uniform limits of maps''. The aim of this section is to render this precise. The reader may feel free to skip ahead to section 3 where the core argument (the case of generic $f$) is presented.

Let $H^*$ denote \v{C}ech cohomology with coefficients in a ring $R$. The following important observation motivates the rest of this section. For every continuous map $f\colon X\to Y$ and every $y\in Y$ we have
\begin{align}\label{kernel}
\rk\left[ H^*X\to H^*f^{-1}y\right]=\rk\left[\bigslant{H^*X}{\ker\left[H^*X\to H^*f^{-1}y\right]}\right]\text{.}
\end{align}
Therefore we want to systematically study kernels of restriction homomorphisms $ H^*X\to H^*C$ for closed subsets $C\subseteq X$ and this motivates \cite[Section~4.1, ideal valued measures]{Gromov2010} the following

\begin{de}\label{restrictionkernel}[Cohomological restriction kernel]
Let $f\colon X\to Y$ be a continuous map. The map $\kappa_f\colon\mathfrak{P}(Y)\to\mathcal{I}(H^*X)$ from the power set of $Y$ to the set of graded ideals in $H^*X$ defined by \[\kappa_f(C)\defeq\ker\left[H^*X\to H^*\left(f^{-1}(C)\right)\right]\] is called the \emph{cohomological restriction kernel of $X$}. For the sake of legibility we will denote $\kappa_{\id_X}$ by $\kappa_X$.
\end{de}

\begin{rem}\label{widthideal}
\begin{enumerate}[(i)]
\item For any continuous map $f\colon X\to Y$ the cohomological restriction kernel $\kappa_f$ satisfies $\kappa_f(Y)=0$ (normalisation), $\kappa_f(C_1)\supseteq\kappa_f(C_2)$ for $C_1\subseteq C_2$ (monotonicity) and $\kappa_f(\emptyset)=H^*X$ (fullness).
\item Equation (\ref{kernel}) becomes \[\rk\left[H^*X\to H^*f^{-1}y\right]=\rk\left[\bigslant{H^*X}{\kappa_f(y)}\right]\] and similarly \[\rk\left[H^kX\to H^kf^{-1}y\right]=\rk\left[\bigslant{H^kX}{\kappa_f(y)\intersection H^kX}\right]\text{.}\]
\end{enumerate}
\end{rem}

Using the specific features of \v{C}ech cohomology we will derive the following property of cohomological restriction kernels.

\begin{pr}[Continuity]\label{continuity}
Let $X$ be a compact topological space and $f\colon X\to Y$ continuous. The cohomological restriction kernel $\kappa_f$ satisfies \emph{continuity}, i.e. for any decreasing nested sequence of closed subsets $Y\supseteq V_1\supseteq V_2\supseteq V_3\supseteq\dots$ we have
\begin{align}\label{cont}
\kappa_f\left(\bigintersection_{i=1}^\infty V_i\right)=\bigunion_{i=1}^\infty\kappa_f(V_i)\text{.}
\end{align}
\end{pr}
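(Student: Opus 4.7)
The plan is to reduce the statement to the classical continuity property of \v{C}ech cohomology with respect to nested intersections of compact subsets. Since $f$ is continuous and $V_i\subseteq Y$ is closed, each $K_i\defeq f^{-1}(V_i)$ is closed in the compact space $X$, hence compact, and $K_1\supseteq K_2\supseteq\dots$ is a decreasing nested family with $\bigintersection_i K_i=f^{-1}\bigl(\bigintersection_i V_i\bigr)$. The key external input I would invoke is that for such a sequence of compact Hausdorff subspaces one has a canonical isomorphism
\[H^*\Bigl(\bigintersection_{i=1}^\infty K_i\Bigr)\cong\varinjlim_i H^*(K_i),\]
with structure maps induced by the inclusions $K_{i+1}\hookrightarrow K_i$. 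This is the feature of \v{C}ech theory that distinguishes it from singular cohomology here and is the only non-formal ingredient of the argument.

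The inclusion ``$\supseteq$'' of (\ref{cont}) is immediate from monotonicity (Remark \ref{widthideal}(i)): each $\kappa_f(V_i)$ restricts trivially to $f^{-1}(V_i)\supseteq f^{-1}\bigl(\bigintersection V_j\bigr)$ and so lies in $\kappa_f\bigl(\bigintersection V_j\bigr)$. For the reverse inclusion, pick $\alpha\in\kappa_f\bigl(\bigintersection V_i\bigr)$, i.e.\ $\alpha\in H^*X$ with $\alpha|_{\bigintersection K_i}=0$. Let $\alpha_i\in H^*(K_i)$ denote the restriction of $\alpha$; the family $(\alpha_i)_i$ is coherent with respect to the restriction system, and by the continuity isomorphism above its image in $\varinjlim H^*(K_i)$ is precisely $\alpha|_{\bigintersection K_i}=0$. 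By the usual description of filtered colimits of modules, an element of $\varinjlim H^*(K_i)$ is trivial iff it is annihilated at some finite stage, so there exists $i$ with $\alpha_i=0$, i.e.\ $\alpha\in\kappa_f(V_i)\subseteq\bigunion_j\kappa_f(V_j)$.

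The main conceptual step -- and the one I expect to be the only substantive obstacle -- is the continuity isomorphism for \v{C}ech cohomology of nested compacta. In a careful write-up I would either cite a standard reference (e.g.\ Eilenberg--Steenrod, Spanier, or Bredon's \emph{Sheaf Theory}) or sketch it via the cofinality of the system of open neighbourhoods: $H^*\bigl(\bigintersection K_i\bigr)=\varinjlim_U H^*(U)$ over open neighbourhoods $U$ of $\bigintersection K_i$, and by compactness of $X$ every such $U$ eventually contains some $K_i$, so the two direct systems are cofinal. Once this is in place, the argument above is purely formal manipulation of kernels and filtered colimits and requires no further input.
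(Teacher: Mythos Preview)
Your proposal is correct and follows essentially the same strategy as the paper: both reduce the nontrivial inclusion to the continuity property of \v{C}ech cohomology on nested compacta, with the easy inclusion handled by monotonicity. The only cosmetic difference is packaging: the paper lifts $z$ with $z|_V=0$ to a relative class in $H^*(X,V)$ via the long exact sequence and then invokes continuity for the compact \emph{pairs} $(X,V_i)$ (the form stated in Theorem~\ref{cechcont}), whereas you work directly with the absolute colimit $H^*\bigl(\bigcap K_i\bigr)\cong\varinjlim H^*(K_i)$ and read off vanishing at a finite stage from the description of filtered colimits. Your cofinality sketch (every open neighbourhood of $\bigcap K_i$ contains some $K_i$ by compactness) is exactly the mechanism underlying both versions, so the two arguments are interchangeable.
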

We will reduce this proposition to the so-called \emph{continuity} of \v{C}ech cohomology. In order to state this property properly we need a little preparation.
\begin{de}A \emph{compact pair} $(X,A)$ is a pair of spaces such that $X$ is compact and $A\subseteq X$ is closed. In particular $A$ itself is compact. Let $Z$ be a topological space. A sequence of pairs $(X_i,A_i)\subseteq (Z,Z)$ ($i\in\N$) together with inclusions $\iota_i^j\colon (X_i,A_i)\hookrightarrow (X_j,A_j)$ whenever $i<j$ is called a \emph{nested sequence of pairs in $Z$} and we denote it by $\left((X_i,A_i)_{i\in\N},\iota_i^j\right)$. For such a nested sequence its \emph{intersection} is the topological pair $(X,A)\subseteq(Z,Z)$ defined by $X\defeq \bigintersection_iX_i$ and $A\defeq \bigintersection_iA_i$.
\end{de}
We will only need the following very weak version of continuity.
\begin{sa}[{{Continuity of \v{C}ech cohomology, \cite[Theorem 2.6]{EilenbergSteenrod1952}}}]\label{cechcont}
Let $(X,A)$ be the intersection of a nested sequence of compact pairs. Let $\iota_i\colon (X,A)\hookrightarrow (X_i,A_i)$ denote the inclusion. Each $u\in\check{H}^q(X,A)$ is of the form $\iota_i^*u_i$ for some $i\in\N$ and some $u_i\in\check{H}^q(X_i,A_i)$.
\end{sa}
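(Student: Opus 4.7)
The plan is to deduce the statement from the \emph{tautness} property of \v{C}ech cohomology for compact pairs in an ambient space: for any compact pair $(K,L)\subseteq(Z,Z)$ there is a natural isomorphism
\[\check{H}^q(K,L)\cong\varinjlim_{(U,V)\supseteq(K,L)}\check{H}^q(U,V)\text{,}\]
where the direct limit runs over open neighbourhoods $(U,V)$ of $(K,L)$ in $Z$, ordered by reverse inclusion. Granting this, the proof splits into two short steps: represent $u$ as the restriction of a class from some open neighbourhood of $(X,A)$, and show that this neighbourhood must eventually contain some $(X_i,A_i)$.

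For the first step, tautness applied to $(X,A)$ produces an open pair $(U,V)\supseteq(X,A)$ in $Z$ together with a class $\tilde{u}\in\check{H}^q(U,V)$ whose restriction to $(X,A)$ equals $u$. For the second step I would consider the closed sets $C_i\defeq X_i\setminus U$ in the compact space $X_1$. They form a decreasing nested family with $\bigintersection_iC_i=X\setminus U=\emptyset$, so by the finite intersection property some $C_i$ is empty, i.e.\ $X_i\subseteq U$. The same argument applied to $A_i\setminus V$ in $A_1$ gives an index where $A_i\subseteq V$; taking the larger of the two indices yields $i$ with $(X_i,A_i)\subseteq(U,V)$. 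Setting $u_i\defeq\bigl((X_i,A_i)\hookrightarrow(U,V)\bigr)^*\tilde{u}$ and using the commutative triangle $(X,A)\hookrightarrow(X_i,A_i)\hookrightarrow(U,V)$, functoriality of pullback gives $\iota_i^*u_i=u$.

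The main obstacle is tautness, which is not formal from the Eilenberg--Steenrod axioms but requires the concrete definition of \v{C}ech cohomology as a direct limit over finite open covers. The essential observation behind it is that any finite open cover of the compact pair $(X,A)$ is the restriction of a finite open cover of some neighbourhood $(U,V)$ in $Z$, which relies on the compactness of $X$ and enough normality of $Z$ to extend open sets. An alternative, more self-contained route avoids tautness and argues directly with covers: given a cocycle representative of $u$ on a finite open cover of $X$, extend each member to an open set in $Z$, observe that their union is an open neighbourhood $U$ of $X$, apply the same finite intersection argument to find $X_i\subseteq U$, and restrict the extended cover to $X_i$ to obtain a cocycle representing the desired $u_i$. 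Either route reduces the problem to the same point-set lemma about decreasing compact intersections.
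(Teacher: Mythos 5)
The paper does not prove this statement; it is quoted verbatim as a citation to Eilenberg--Steenrod, so there is no in-paper proof to compare against. Evaluated on its own merits, your tautness-based argument is the standard way to establish this weak form of continuity and is essentially correct. Two small points are worth making explicit. First, you need to fix an ambient space in which tautness holds: the natural choice is $Z=X_1$, which is compact (Hausdorff in Eilenberg--Steenrod's conventions), hence normal, which is exactly what is required for tautness of closed pairs. Without naming $Z$, the argument is formally incomplete, because the paper's notion of a nested sequence of compact pairs is always relative to some ambient $Z$. Second, the point-set step is fine: $C_i\defeq X_i\setminus U$ are closed in the compact $X_1$, decreasing, with $\bigintersection_iC_i=X\setminus U=\emptyset$, so some $C_i=\emptyset$; the same for $A_i\setminus V$, and then functoriality of the pullback along $(X,A)\hookrightarrow(X_i,A_i)\hookrightarrow(U,V)$ gives $\iota_i^*u_i=u$.

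The only caveat is that invoking tautness is not a reduction to something strictly more elementary: tautness of compact pairs in a normal space is a theorem of roughly the same depth as the continuity statement itself, and in Eilenberg--Steenrod the two are proved by closely related direct arguments with nerves of covers. You acknowledge this and sketch the cover-based alternative, but that sketch glosses over the genuine technical point, namely that a cocycle on the nerve of a cover of $X$ by sets $W_\alpha$ need not extend to a cocycle on the nerve of a cover of a neighbourhood $U$ by open $U_\alpha\supseteq W_\alpha$, since the nerve of $\{U_\alpha\}$ can have more simplices. Making the extension work requires refining and shrinking, and this is precisely the content of the tautness lemma you are trying to bypass. So the tautness route is the one to commit to; the alternative as written is not yet a proof.
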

\begin{proof}[Proof of Proposition \ref{continuity}]
Let us prove the continuity of $\kappa_f$ assuming we have proven that of $\kappa_X$. The subsets $\left(f^{-1}V_i\right)_{i\in\N}$ form a decreasing nested sequence of closed subsets of $X$ and the continuity of $\kappa_X$ implies

\[\kappa_f\left(\bigintersection_{i=1}^\infty V_i\right)=\kappa_X\left(f^{-1}\bigintersection_{i=1}^\infty V_i\right)=\kappa_X\left(\bigintersection_{i=1}^\infty f^{-1}V_i\right)=\bigunion_{i=1}^\infty\kappa_f\left(f^{-1}V_i\right)=\bigunion_{i=1}^\infty\kappa_f(V_i)\text{.}\]

It remains to prove the continuity of $\kappa_X$. Let $V$ denote the intersection $\bigintersection_{i=1}^\infty V_i$. The only inclusion of (\ref{cont}) not following from monotonicity is \[\kappa_f\left(\bigintersection_{i=1}^\infty V_i\right)\subseteq\bigunion_{i=1}^\infty\kappa_X(V_i)\text{,}\] i.e. given a cohomology class $z\in H^qX$ satisfying $z\vert V=0\in H^q V$ we have to show the existence of an index $i\in\N$ such that $z\vert V_i=0$.

Consider the nested sequence of compact pairs given by $(X,V_i)_{i\in\N}$. The intersection of this nested sequence is precisely $(X,V)$. For every $i\in\N$ naturality of the long exact sequence yields the following commutativ diagram.

\[\begin{xy}
  \xymatrix{
      H^q(X,V_i) \ar[r]\ar[d] & H^qX\ar[r]\ar@{=}[d] &   H^qV_i\ar[d] \\
      H^q(X,V) \ar[r]           & H^qX \ar[r]&  H^qV
  }\end{xy}\]
In the diagram above every arrow is given by restriction. Because the class $z\in H^qX$ satisfies $z\vert V=0\in H^qV$ we can lift $z$ to a class $\widetilde{z}\in H^q(X,V)$. By Theorem \ref{cechcont} there exists an index $i\in\N$ and a class $u_i\in H^q(X,V_i)$ such that $u_i\vert(X,V)=\widetilde{z}$. We get $u_i\vert X=z$ and the top horizontal sequence yields $z\vert V_i=0$.
\end{proof}
\begin{rem}
\begin{enumerate}[(i)]
\item The continuity axiom fails if $X$ is not compact. Let $X\defeq B^2\setminus 0=\left\{x\in\R^2\middle\vert 0<x_1^2+x_2^2\leq 1\right\}$ and $V_i\defeq \left\{x\in X\middle\vert x_1\leq\frac{1}{i}\right\}$. The intersection $\bigintersection_iV_i=\{x\in X\vert x_1\leq 0\}$ is contractible but the generator of $\check{H}^1X$ survives when restricted to any $V_i$.
\item Continuity also fails when the $V_i$ are not required to be closed. Consider $X\defeq [0,1]$ and $V_i\defeq (0,\frac{1}{i}]$. We have $H^0X=H^0V_i=\Z$ but $\bigintersection_i V_i=\emptyset$ hence $H^0\bigintersection_iV_i=0$.
\item Most interestingly continuity fails if one uses singular instead of \v{C}ech cohomology. Consider the closed topologist's sine curve
\[S\defeq\underbrace{\left\{(t,\sin t)\middle\vert t>0\right\}}_{S_+}\union\underbrace{\{0\}\times[-1,1]}_{S_0}\subset\R^2\] and $V_i\defeq\{(x,y)\in S\vert x\leq\frac{1}{i}\}$. The space $S$ and all the $V_i$ have exactly two path-components hence $H_{sing}^0S=H_{sing}^0V_i=\Z^2$ but $\bigintersection_iV_i$ is homeomorphic to an interval hence $H_{sing}^0\bigintersection_iV_i=\Z$. Consider the cohomology class $z\in H_{sing}^0S$ which takes the value $1$ on the path-component $S_+$ and $0$ on $S_0$. We have that $z\vert\bigintersection_iV_i=0$ but $z\vert V_i\neq 0$ for all $i$.
\end{enumerate}
\end{rem}

Let $X$ and $Y$ be topological spaces and $R$ a coefficient ring such that the rank of a homormorphism between $R$-modules makes sense, e.g. $\Z$, $\Z_2$ or $\Q$.

Recall from Definition \ref{cowidth} that for every continuous map $f\colon X\to Y$ the \emph{total} or \emph{degree $k$ cohomological width of $f$} is given by
\begin{align*}
\width_*(f)\defeq &\max_{y\in Y}\rk\left[H^*X\to H^*f^{-1}y\right]\text{ and}\\
\width_k(f)\defeq &\max_{y\in Y}\rk\left[H^kX\to H^kf^{-1}y\right]\text{.}
\end{align*}
They give rise to the \emph{waist functionals} $\width_*$ and $\width_k$ both of which are (not necessarily in any sense continuous) maps $C(X,Y)\to\N_0$ where $C(X,Y)$ is the space of all continuous maps $f\colon X\to Y$.

\begin{pr}[Upper semi-continuity of waists]\label{upper}
Let $X$ and $Y$ be compact and $Y$ metrisable. If the \v{C}ech cohomology algebra $H^*X$ is finite dimensional the waist functionals $\width_*$ and $\width_k\colon C(X,Y)\to\N_0$ are upper semi-continuous with respect to the compact-open topology.
\end{pr}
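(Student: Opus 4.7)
The plan is to show that for every $f \in C(X,Y)$ there exists a neighbourhood $U$ of $f$ on which $\width_k(g) \leq \width_k(f)$ for all $g \in U$; since $\width_k$ is integer valued this is equivalent to upper semi-continuity at $f$. Because $X$ is compact and $Y$ is metrisable by some metric $d$, the compact-open topology on $C(X,Y)$ agrees with the topology of uniform convergence induced by $d_\infty(f,g) \defeq \sup_{x \in X} d(f(x),g(x))$, so it suffices to produce $\delta > 0$ such that $d_\infty(f,g) < \delta$ forces $\width_k(g) \leq \width_k(f)$.

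The decisive elementary observation is a two-step comparison of ranks. First, whenever $A \subseteq B \subseteq X$ are closed, monotonicity of $\kappa_X$ gives $\kappa_X(B) \subseteq \kappa_X(A)$, hence $\rk[H^k X \to H^k A] \leq \rk[H^k X \to H^k B]$. Second, $d_\infty(f,g) < \delta$ forces $g^{-1}(y) \subseteq f^{-1}(\overline{B}_\delta(y))$ for every $y \in Y$, because $g(x)=y$ implies $d(f(x),y) < \delta$. Combining the two yields
\[\rk[H^k X \to H^k g^{-1}(y)] \leq \rk[H^k X \to H^k f^{-1}(\overline{B}_\delta(y))]\text{,}\]
reducing the proposition to finding a single $\delta > 0$ so that the right hand side is at most $\width_k(f)$ uniformly in $y$.

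For each fixed $y \in Y$ the closed balls $\overline{B}_{1/n}(y)$ form a decreasing nested family with intersection $\{y\}$, so Proposition \ref{continuity} applied to $\kappa_f$ gives
\[\kappa_f(y) \cap H^k X = \bigcup_{n=1}^\infty \bigl(\kappa_f(\overline{B}_{1/n}(y)) \cap H^k X\bigr)\text{,}\]
an increasing chain of submodules of the finitely generated module $H^k X$. Such a chain stabilises, so there is $\epsilon_y > 0$ with
\[\rk[H^k X \to H^k f^{-1}(\overline{B}_{\epsilon_y}(y))] = \rk[H^k X \to H^k f^{-1}(y)] \leq \width_k(f)\text{.}\]

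The remaining and only genuinely nontrivial step is to upgrade the pointwise radii $\epsilon_y$ to a single uniform $\delta$; here the compactness of $Y$ is essential. Cover $Y$ by the open balls $B_{\epsilon_y/2}(y)$, extract a finite subcover with centres $y_1,\ldots,y_N$, and set $\delta \defeq \tfrac{1}{2}\min_i \epsilon_{y_i}$. Given $y \in Y$, pick $i$ with $d(y,y_i) < \epsilon_{y_i}/2$; the triangle inequality then yields $\overline{B}_\delta(y) \subseteq \overline{B}_{\epsilon_{y_i}}(y_i)$, and one more application of the monotonicity step forces $\rk[H^k X \to H^k f^{-1}(\overline{B}_\delta(y))] \leq \width_k(f)$ for every $y$. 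Taking the maximum over $y$ delivers $\width_k(g) \leq \width_k(f)$ whenever $d_\infty(f,g) < \delta$. Replacing $H^k X$ by the finite-dimensional algebra $H^* X$ throughout handles $\width_*$ verbatim.
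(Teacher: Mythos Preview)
Your proof is correct and takes a genuinely different route from the paper's.

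The paper argues sequentially: given $f_n\to f$ with $\width_*(f_n)\geq\alpha$, it chooses witness points $y_n$, passes to a convergent subsequence $y_n\to y$, and then sets up a nested family of closed sets $\{x : d(f_n(x),y_n)\leq \tfrac{1}{4n^2}+\tfrac{1}{n}\}$ whose intersection is shown (via explicit $\varepsilon$-estimates) to equal $f^{-1}(y)$; continuity of $\kappa_X$ and Noetherianity then force $\kappa_{f_n}(y_n)\supseteq\kappa_f(y)$ for some $n$. Your argument instead fixes $f$, uses continuity of $\kappa_f$ at each $y$ to obtain a pointwise radius $\epsilon_y$ with $\rk[H^kX\to H^kf^{-1}(\overline{B}_{\epsilon_y}(y))]\leq\width_k(f)$, and then promotes these to a uniform $\delta$ by a Lebesgue-number style finite-subcover argument; the elementary inclusion $g^{-1}(y)\subseteq f^{-1}(\overline{B}_\delta(y))$ for $d_\infty(f,g)<\delta$ finishes the job.

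Both proofs rest on the same three ingredients---monotonicity of $\kappa$, Proposition~\ref{continuity}, and the Noetherian property of $H^kX$---and both use compactness of $Y$ in an essential way (the paper for sequential compactness, you for the finite subcover). Your approach is more streamlined: it avoids the subsequence extraction and the somewhat delicate bookkeeping with the quantities $\tfrac{1}{4n^2}+\tfrac{1}{n}$, and it makes the role of compactness of $Y$ more transparent. The paper's approach, on the other hand, directly produces the witness point $y$ for $f$ as a limit of the $y_n$, which is conceptually satisfying even if technically heavier.
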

\begin{proof}
We will just show the upper semi-continuity of $\width_*$. The corresponding statement for $\width_k$ can be proven analogously. Endow $Y$ with an arbitrary metric $d$. The compact-open topology is identical with the metric topology induced from the uniform norm. As $C(X,Y)$ is a metric space semi-continuity is equivalent to sequential semi-continuity. So given a sequence of functions $f_n\colon X\to Y$ uniformly converging to $f$ we need to show that $\width_*(f_n)\geq\alpha$ for every $n$ implies \[\width_*(f)\geq\alpha\text{.}\] Hence for every $n$ there exists a point $y_n\in Y$ such that \[\rk\left[H^*X\to H^*f_n^{-1}y_n\right]=\rk\left[\bigslant{H^*X}{\kappa_{f_n}(y_n)}\right]\geq\alpha\] where $\kappa_{f_n}$ is the cohomological restriction kernel of $f_n$ and we used Corollary \ref{widthideal} (ii).

Since $Y$ is sequentially compact we can pass to a subsequence and assume that the $y_n$ converge to some point $y\in Y$ and that the convergences $f_n\to f$ and $y_n\to y$ are controlled by
\begin{align*}
d(y_n,y)<\frac{1}{8n^2}\\
d(f_n,f)<\frac{1}{8n^2}\text{.}
\end{align*}
We claim the following equality of subsets of $X$.
\begin{align}
\bigintersection_{n>0}\left\{x\in X\middle\vert d(f_n(x),y_n)\leq\frac{1}{4n^2}+\frac{1}{n}\right\}=f^{-1}(y)\label{seteq}
\end{align}
Let us first discuss the inclusion ``$\supseteq$'': For every $x\in X$ with $d(f_n(x),y_n)>\frac{1}{4n^2}+\frac{1}{n}$ for some $n$ the reverse triangle inequality implies
\begin{align*}
d(f(x),y)\geq d(f_n(x),y_n)-d(f_n(x),f(x))-d(y_n,y)\\
>\frac{1}{4n^2}+\frac{1}{n}-\frac{1}{8n^2}-\frac{1}{8n^2}=\frac{1}{n}>0
\end{align*}
Similarly the inclusion ``$\subseteq$'' can be shown as follows: If $x\in X$ satisfies $d(f_n(x),y_n)\leq\frac{1}{4n^2}+\frac{1}{n}$ for every $n$ we can conclude
\begin{align*}
d(f(x),y)\leq d(f(x),f_n(x))+d(f_n(x),y_n)+d(y_n,y)\\
<\frac{1}{8n^2}+\frac{1}{4n^2}+\frac{1}{n}+\frac{1}{8n^2}\to 0
\end{align*} and hence $f(x)=y$. This proves (\ref{seteq}).

Moreover we claim that the sets on the left hand side of (\ref{seteq}) are nested, i.e. we have
\begin{align}\label{setnested}
\left\{x\in X\middle\vert d(f_n(x),y_n)\leq\frac{1}{4n^2}+\frac{1}{n}\right\}\supseteq\left\{x\in X\middle\vert d(f_{n+1}(x),y_{n+1})\leq\frac{1}{4(n+1)^2}+\frac{1}{n+1}\right\}\text{.}
\end{align}
If $x\in X$ is an element of the right hand side we have
\begin{align*}
d(f_n(x),y_n)\leq d(f_n(x),f_{n+1}(x))+d(f_{n+1}(x),y_{n+1})+d(y_{n+1},y_n)\\
\leq\frac{1}{4n^2}+\frac{1}{4(n+1)^2}+\frac{1}{n+1}+\frac{1}{4n^2}\leq\frac{1}{4n^2}+\frac{1}{n}\text{.}
\end{align*}
proving (\ref{setnested}).

The continuity axiom (which holds by Proposition \ref{continuity} since $X$ is compact) implies \[\bigunion_{n>0}\kappa_X\left\{x\in X\middle\vert d(f_n(x),y_n)\leq\frac{1}{4n^2}+\frac{1}{n}\right\}=\kappa_Xf^{-1}(y)=\kappa_f(y)\text{.}\] The left hand side is an increasing sequence of ideals in $H^*X$ and since the latter is finitely generated (as an $R$-module) there exists an $n>0$ such that \[\kappa_X\left\{x\in X\middle\vert d(f_n(x),y_n)>\frac{1}{4n^2}+\frac{1}{n}\right\}=\kappa_f(y)  \text{.}\] Monotonicity yields \[\kappa_{f_n}(y_n)\supseteq\kappa_f(y)\] proving \[\rk\left[\bigslant{H^*X}{\kappa_f(y)}\right]\geq\rk\left[\bigslant{H^*X}{\kappa_{f_n}(y_n)}\right]\geq\alpha\text{.}\qedhere\]
\end{proof}
\begin{rem}\label{explain}
\begin{enumerate}[(i)]
\item The waist functionals fail to be lower semi-continuous. Consider the embedding $g\colon S^2\hookrightarrow D^3$ and the sequence $f_n\colon S^2\hookrightarrow D^3$ shrinking $g$ to a point, e.g. $f_n(x)=g(x)/n$. This sequence uniformly converges to the constant map $f$ with value $0\in D^3$ but $\width_2(f_n)=0$ whereas $\width_2(f)=1$.
\item One question which immediately arises about the definition of cohomological width of a map $f\colon X\to Y$ is why we defined it as \[\width_k(f)=\max_{y\in Y}\rk\left[H^kX\to H^kf^{-1}y\right]\] where we could have equally been interested in \[w_k(f)\defeq \max_{y\in Y}\rk H^kf^{-1}y\text{.}\] However this functional $w_k\colon C(X,Y)\to\N_0$ fails to be upper semi-continuous. Consider the composition \[g\colon S^2\hookrightarrow D^3\to[-1,1]\] where the first map is the standard embedding and the second map is the restriction of a linear projection, e.g. onto the $x$-axis. Again we consider the family $f_n(x)\defeq g(x)/n$ which converges uniformly to $f$, the constant map with value $0\in[0,1]$. All fibers of $f_n$ are points or circles so we have $w_1(f_n)=1$ but the only nonempty fiber of $f$ is $S^2$ hence $w_1(f)=0$.

Nevertheless we clearly have $w_k(f)\geq\width_k(f)$ so any lower bound for $\width_k(f)$ is also one for $w_k(f)$.

\item The proposition above fails if $X$ is noncompact. Consider again $X=B^2\setminus 0=\left\{x\in\R^2\middle\vert 0<x_1^2+x_2^2\leq 1\right\}$ and the compositions \[B^2\setminus 0\stackrel{pr_1}{\longrightarrow}\R\stackrel{f_\varepsilon}{\longrightarrow}\R\] where the first map is the projection onto the first coordinate and
\[f_\varepsilon\colon x\mapsto\begin{cases}0\text{ for }x\leq\varepsilon\\x-\varepsilon\text{ for }x>\varepsilon\end{cases}\] for every $\varepsilon\in\R$. The maps $f_{1/n}\circ pr_1$ converge uniformly to $f_0\circ pr_1$ and satisfy $\width_1(f_{1/n}\circ pr_1)=1$ whereas we have $\width_1(f_0\circ pr_1)=0$.

\item Upper semi-continuity also fails if cohomological width is not defined via \v{C}ech but singular cohomology. Define
\[\width_0^{sing}(f)\defeq\max_{y\in Y}\rk\left[H_{sing}^0X\to H_{sing}^0f^{-1}y\right]\] and consider again the closed topologist's sine curve \[S\defeq\underbrace{\left\{(t,\sin t)\middle\vert t>0\right\}}_{S_+}\union\underbrace{\{0\}\times[-1,1]}_{S_0}\subset\R^2\] together with the compositions \[S\stackrel{pr_1}{\longrightarrow}\R\stackrel{f_\varepsilon}{\longrightarrow}\R\text{.}\] They satisfy \[\width_0^{sing}\left(f_{1/n}\circ pr_1\right)=2\] but $\width_0^{sing}\left(f_0\circ pr_1\right)=1$.

\item The proof of Proposition \ref{upper} still shows the upper semi-continuity of $\width_k$ if only $H^kX$ is finitely generated and that of $\width_*$ if $H^*X$ is finitely generated as an $R$-algebra since all finitely generated graded commutative algebras over Noetherian base rings are Noetherian.

On the other hand there must be some finiteness condition on $H^*X$. For $X\defeq\{0\}\union\left\{\frac{1}{n}\middle\vert n\in\N\right\}\subset\R$ the cohomology group $H^0X$ is not finitely generated. We have $\width_0(f_{1/n})=\infty$ for all $n\in\N$ but their limit satisfies $\width_0(f_0)=1$.
\end{enumerate}
\end{rem}

\section{Filling argument}
Prior to prove cohomological waist inequalities we need some preliminaries. We will deal with various kinds of manifolds such as smooth manifolds, topological manifolds and manifolds with corners (cf. \cite{Lee2003}). If any specifier is missing by a manifold we mean a smooth manifold. An example of a smooth manifold with corners up to codimension $k$ is the standard $k$-simplex $\Delta^k$. Another source of examples will given in the following proposition.
\begin{de}[Smooth, embedded simplices]
Let $N^q$ be a manifold. A \emph{smooth, embedded $k$-simplex $\sigma$ in $N$} is a smooth map $\sigma\colon\Delta^k\to N$ such that there exists an open neighbourhood $\Delta^k\subset U\subset\R^k$ and a smooth extension $\widetilde{\sigma}\colon U\to N$ which is an embedding.
\end{de}
\begin{de}[Stratum transversality]
Let $M^n$ and $N^q$ be manifolds without boundary, $f\colon M\to N$ smooth and $\sigma\colon\Delta^k\to N$ a smooth, embedded simplex. We say that $f$ intersects $\sigma$ \emph{stratum transversally} if $f$ intersects the interior of $\sigma$ transversally and all of its faces stratum transversally. The map $f$ intersects a $0$-simplex stratum transversally iff its image point is a regular value of $f$.
\end{de}
\begin{pr}[Generic preimages of simplices]\label{genpre}
Let $M^n$ and $N^q$ be closed, oriented manifolds, $\sigma\colon\Delta^k\to N$ a smooth, embedded simplex and $f\colon M\to N$ a smooth map intersecting $\sigma$ stratum transversally.

The preimage $f^{-1}\sigma(\Delta^k)$ is an oriented topological $(n-q+k)$-manifold with boundary \[\del f^{-1}\sigma(\Delta^k)=f^{-1}\sigma(\del\Delta^k)\text{.}\]
\end{pr}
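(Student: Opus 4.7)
The plan is to reduce the statement to a local question: around any point $x\in f^{-1}\sigma(\Delta^k)$ I will exhibit a chart showing that a neighbourhood of $x$ is homeomorphic either to $\R^{n-q+k}$ (if $f(x)\in\sigma(\inte\Delta^k)$) or to a half-space of the same dimension (if $f(x)\in\sigma(\del\Delta^k)$). To set up the local picture at a given $x$, I would apply the tubular neighbourhood theorem to the embedded submanifold $\widetilde\sigma(U)\subseteq N$ supplied by the smooth extension $\widetilde\sigma$ of $\sigma$. This yields coordinates on a small open neighbourhood $W\subseteq N$ of $f(x)$ in which $\widetilde\sigma(U)\cap W$ identifies with $V\times\{0\}\subseteq\R^k\times\R^{q-k}$ for some open $V\subseteq\R^k$, and under which $\sigma(\Delta^k)\cap W$ identifies with $(V\cap\Delta^k)\times\{0\}$.

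Writing $f=(f_1,f_2)$ on $f^{-1}(W)$ in these coordinates, transversality of $f$ to the top stratum $\sigma(\inte\Delta^k)$ means precisely that $0$ is a regular value of $f_2$. Hence $L\defeq f_2^{-1}(0)$ is locally a smooth submanifold of $M$ of dimension $n-q+k$, and on $f^{-1}(W)$ we have
\[f^{-1}\sigma(\Delta^k)=\bigl\{y\in L\ :\ f_1(y)\in\Delta^k\bigr\}=(f_1\vert_L)^{-1}(\Delta^k)\text{.}\]
The problem thereby reduces to analysing the preimage of the standard simplex $\Delta^k\subseteq\R^k$ under the smooth map $f_1\vert_L\colon L\to\R^k$.

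The remaining stratum transversality hypotheses translate directly into the assertion that $f_1\vert_L$ is transverse to every open face of $\Delta^k$. A standard induction on the depth of the corner — using the local submersion theorem on each stratum together with the standard product model $[0,\infty)^j\times\R^{k-j}$ of a codimension-$j$ corner of $\Delta^k$ — then shows that $(f_1\vert_L)^{-1}(\Delta^k)$ is a smooth $(n-q+k)$-manifold with corners whose codimension-$j$ stratum is the $(f_1\vert_L)$-preimage of the corresponding stratum of $\Delta^k$. Since every smooth manifold with corners is canonically a topological manifold with boundary (the local model $[0,\infty)^j\times\R^{k-j}$ being homeomorphic to a half-space), this supplies the required chart and identifies the topological boundary locally with $f^{-1}\sigma(\del\Delta^k)$.

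For orientations I would combine those of $M$ and $N$ with the standard orientation of $\Delta^k$ via the usual preimage convention on the open top stratum, and extend by continuity to the lower-dimensional faces. The main obstacle I anticipate is the bookkeeping at the deepest corners of $\Delta^k$, where several transversality conditions must be combined simultaneously and the corner-smoothing must be seen to produce a \emph{coherent} topological-manifold-with-boundary structure globally on $f^{-1}\sigma(\Delta^k)$; this is standard in the smooth-with-corners category but needs to be invoked rather than taken for granted.
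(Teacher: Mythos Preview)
Your argument is correct. The reduction via a tubular neighbourhood of $\widetilde\sigma(U)$ to a map $f_1\vert_L\colon L\to\R^k$, together with the observation that stratum transversality of $f$ to $\sigma$ is equivalent to transversality of $f_1\vert_L$ to each open face of $\Delta^k$, is exactly the right local picture; the induction on corner depth then produces the manifold-with-corners structure whose strata are the preimages of the strata of $\Delta^k$.

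The paper's own proof, however, does none of this explicitly: it simply invokes a result from the literature (a theorem of Nielsen on preimages under stratum-transverse maps) which asserts directly that $f^{-1}\sigma(\Delta^k)$ is a smooth manifold with corners up to codimension $k$ whose codimension-$l$ points are precisely the preimages of the codimension-$l$ corner points of $\Delta^k$. What you have written is in effect a sketch of the content of that cited theorem in the present special case. So your approach is not genuinely different---it is the black box unpacked---and the ``bookkeeping at the deepest corners'' that you correctly flag as the delicate point is exactly what the paper delegates to the reference.
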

\begin{proof}
Theorem 3 in \cite{Nielsen1982} shows that $f^{-1}\sigma(\Delta^k)$ is a smooth manifold with corners up to codimension $k$ hence it is a topological manifold with boundary. Note that most of the technical assumptions are met since $M$ and $N$ do not have boundary. Moreover the theorem states that the codimension $l$ corner points of $f^{-1}\sigma(\Delta^k)$ are precisely the preimages of codimension $l$ corner points of $\Delta^k$, in particular $\del f^{-1}\sigma(\Delta^k)=f^{-1}\sigma(\del\Delta^k)$.
\end{proof}
Mind the following notational convention.
\begin{nota}
In the situation of Proposition \ref{genpre} we frequently denote the preimage of a simplex $\sigma\colon\Delta^k\to N$ by \[F_{\sigma}\defeq f^{-1}\sigma(\Delta^k)\] and similarly \[F_{\del\sigma}\defeq f^{-1}\sigma(\del\Delta^k)=\del F_{\sigma}\text{.}\] We will often use this notation without explicitly mentioning it.
\end{nota}
\begin{de}[Smooth triangulations]
Let $N^q$ be a smooth manifold. A \emph{smooth triangulation $\mathcal{T}=(K,\varphi)$ of $N$} consists of a finite simplicial complex $K$ together with a homeomorphism $\varphi\colon\vert K\vert\to N$ such that the restriction of $\varphi$ to any simplex yields a smooth, embedded simplex in $N$. The set of all of these smooth $k$-simplices of $\mathcal{T}$ shall be denoted by $\mathcal{T}_k$. We will often omit the specification \emph{smooth} and simply talk about \emph{a triangulation} and its \emph{simplices}.

Let $R$ be a coeffcient ring. If $N^q$ is $R$-oriented a triangulation $\mathcal{T}$ is called \emph{$R$-oriented} iff the sum of the elements in $\mathcal{T}_q$, i.e. the top-dimensional  simplices, represents the $R$-oriented fundamental class of $N^q$.
\end{de}
Any smooth manifold $N$ admits a smooth triangulation \cite[Theorem~10.6]{Munkres1967}.
\begin{pr}\label{cyclesmot}
Let $f\colon M^n\to N^q$ be a smooth map between closed $R$-oriented manifolds, $\mathcal{T}$ an $R$-oriented triangulation of $N$ such that $f$ intersects all the simplices $\sigma\in\mathcal{T}_q$ stratum transversally. For $k=0,\ldots,q$ we can inductively assign singular chains $c_{\sigma}\in C_{n-q+k}(F_{\sigma};R)$ to every $\sigma\in\mathcal{T}_k$ such that the following properties hold.
\begin{enumerate}[(i)]
\item For $\sigma\in\mathcal{T}_0$ the chain $c_{\sigma}\in C_{n-q}(F_{\sigma};R)$ represents the (correctly oriented) fundamental class of $F_{\sigma}$.
\item For $1\leq k\leq q$ and $\sigma\in\mathcal{T}_k$ we can view the sum
\begin{align}\label{disclaimer1}
\sum_{i=0}^k(-1)^ic_{\del_i\sigma}
\end{align}
as an element of $C_{n-q+k-1}(\del F_{\sigma};R)$ and this represents the (correctly oriented) fundamental class of $\del F_{\sigma}$ with the boundary orientation. The element $c_{\sigma}\in C_{n-q+k}(F_{\sigma};R)$ satisfies
\begin{align}\label{disclaimer2}
\del c_{\sigma}=\sum_{i=0}^k(-1)^ic_{\del_i\sigma}
\end{align}
as an equation in $C_{n-q+k-1}(F_{\sigma};R)$ and $c_{\sigma}$ represents the (correctly oriented) relative fundamental class in $H_{n-q+k}\left(F_{\sigma},\del F_{\sigma};R\right)$.
\item The sum
\begin{align}\label{disclaimer3}
\sum_{\sigma\in\mathcal{T}_q}c_{\sigma}\in C_{n}(M;R)
\end{align}
represents the (correctly oriented) fundamental class of $M$.
\end{enumerate}
\end{pr}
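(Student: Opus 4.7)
The plan is to proceed by induction on $k = 0, 1, \dots, q$, building the chains $c_\sigma$ simplex by simplex and verifying at each stage both the boundary equation (\ref{disclaimer2}) and the fact that $c_\sigma$ represents the (relative) fundamental class. The base case $k = 0$ is straightforward: since $0$-simplices are regular values, $F_\sigma = f^{-1}\sigma(\Delta^0)$ is a closed oriented $(n-q)$-manifold, so I simply pick any singular cycle $c_\sigma \in C_{n-q}(F_\sigma;R)$ representing $[F_\sigma]$.

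For the inductive step, fix $\sigma \in \mathcal{T}_k$. By Proposition \ref{genpre}, $F_\sigma$ is a compact oriented $(n-q+k)$-manifold with boundary $\partial F_\sigma = F_{\partial\sigma}$, and the boundary decomposes as $\partial F_\sigma = \bigcup_{i=0}^k F_{\partial_i \sigma}$ where the pieces meet along the lower-dimensional strata $F_{\partial_i \partial_j \sigma}$. Set $z_\sigma \defeq \sum_{i=0}^k (-1)^i c_{\partial_i \sigma} \in C_{n-q+k-1}(\partial F_\sigma;R)$; this makes sense because each $F_{\partial_i \sigma} \subseteq \partial F_\sigma$. To see that $z_\sigma$ is a cycle, use the inductive formula $\partial c_{\partial_i \sigma} = \sum_j (-1)^j c_{\partial_j \partial_i \sigma}$ and invoke the simplicial identity $\partial_j \partial_i = \partial_{i-1} \partial_j$ for $j < i$; the usual reindexing forces $\partial z_\sigma = 0$.

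The main obstacle is showing that $z_\sigma$ actually represents the fundamental class $[\partial F_\sigma]$ with the boundary orientation. Here I would argue as follows: the stratum-transversality assumption and the corner structure of Proposition \ref{genpre} give a CW-type decomposition of $\partial F_\sigma$ whose top cells are the interiors of the $F_{\partial_i \sigma}$. The relative class $[F_{\partial_i \sigma}, \partial F_{\partial_i \sigma}]$ maps in $H_{n-q+k-1}(\partial F_\sigma)$ to the image of $[\partial_i \Delta^k]$ in the analogous decomposition of $\partial \Delta^k$; since $\sum_i (-1)^i [\partial_i \Delta^k] = \partial[\Delta^k] = [\partial \Delta^k]$ and pulling back preserves this relation (orientations are induced coherently by the transversal preimage construction), the class $[z_\sigma]$ coincides with $[\partial F_\sigma]$. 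Once this is settled, the long exact sequence of the pair $(F_\sigma, \partial F_\sigma)$, together with $\partial[F_\sigma, \partial F_\sigma] = [\partial F_\sigma]$, guarantees that $z_\sigma$ is the image under $\partial$ of a chain $c_\sigma \in C_{n-q+k}(F_\sigma;R)$ representing the relative fundamental class, and any such lift works.

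Finally, for (iii), consider $c \defeq \sum_{\sigma \in \mathcal{T}_q} c_\sigma \in C_n(M;R)$. Applying $\partial$ and grouping contributions by $(q-1)$-faces $\tau$ of $\mathcal{T}$, each interior $\tau$ is a face of exactly two top-dimensional simplices with opposite induced boundary orientations, so the contributions of $c_\tau$ cancel; since $M$ is closed every $(q-1)$-face is interior and hence $\partial c = 0$. That $[c] = [M]$ follows by pushing the decomposition $M = \bigcup_{\sigma \in \mathcal{T}_q} F_\sigma$ through the Mayer--Vietoris / excision picture: the sum $\sum_\sigma [F_\sigma, \partial F_\sigma] \in \bigoplus_\sigma H_n(F_\sigma, \partial F_\sigma)$ maps to the fundamental class of $M$ exactly because $\mathcal{T}$ is $R$-oriented (so $\sum_{\sigma \in \mathcal{T}_q} \sigma$ represents $[N]$) and $f$ has degree compatible with the transversal construction, yielding $[c] = [M]$.
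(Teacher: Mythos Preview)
Your proof follows the same inductive scheme as the paper's: build $c_\sigma$ by induction on $k$, verify that $z_\sigma=\sum_i(-1)^ic_{\partial_i\sigma}$ is a cycle via the simplicial identities, identify $[z_\sigma]$ with $[\partial F_\sigma]$, and then lift to a relative cycle. The difference lies in how fundamental classes are verified. The paper uses the local orientation criterion throughout: to check that $z_\sigma$ represents $[\partial F_\sigma]$, it observes that for each $j$ the map
\[
H_{n-q+k-1}(\partial F_\sigma)\longrightarrow H_{n-q+k-1}\Bigl(\partial F_\sigma,\textstyle\bigcup_{i\neq j}F_{\partial_i\sigma}\Bigr)
\]
sends $[z_\sigma]$ to $[(-1)^jc_{\partial_j\sigma}]$, and then pushes further to $H_{n-q+k-1}(\partial F_\sigma,\partial F_\sigma\setminus p)$ for each interior point $p\in F_{\partial_j\sigma}$ to recover the correct local orientation. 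The same device handles (iii): for each $\tau\in\mathcal{T}_q$ one restricts $[c]$ to $H_n\bigl(M,\bigcup_{\sigma\neq\tau}F_\sigma\bigr)$ and then to $H_n(M,M\setminus p)$.

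Your verification in (ii) via comparison with $\partial\Delta^k$ is a reasonable alternative, though the phrase ``pulling back preserves this relation'' hides precisely the orientation check that the paper makes explicit. Your argument for (iii), however, is too loose to stand as written: the clause ``$f$ has degree compatible with the transversal construction'' does not make sense (the map $f\colon M^n\to N^q$ has no degree when $n\neq q$), and the Mayer--Vietoris gluing you invoke still requires a concrete check that the pieces assemble to the generator of $H_n(M;R)$ rather than to a multiple or to zero. The paper's pointwise local orientation check is the clean way to close this, and also makes the chain-level adjustment explicit: one first takes any relative cycle representing $[F_\sigma,\partial F_\sigma]$ and then modifies it by a chain in $\partial F_\sigma$ so that (\ref{disclaimer2}) holds on the nose.
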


\begin{center}
\def\svgwidth{0.95\textwidth}
\begingroup%
  \makeatletter%
  \providecommand\color[2][]{%
    \errmessage{(Inkscape) Color is used for the text in Inkscape, but the package 'color.sty' is not loaded}%
    \renewcommand\color[2][]{}%
  }%
  \providecommand\transparent[1]{%
    \errmessage{(Inkscape) Transparency is used (non-zero) for the text in Inkscape, but the package 'transparent.sty' is not loaded}%
    \renewcommand\transparent[1]{}%
  }%
  \providecommand\rotatebox[2]{#2}%
  \ifx\svgwidth\undefined%
    \setlength{\unitlength}{569.071661bp}%
    \ifx\svgscale\undefined%
      \relax%
    \else%
      \setlength{\unitlength}{\unitlength * \real{\svgscale}}%
    \fi%
  \else%
    \setlength{\unitlength}{\svgwidth}%
  \fi%
  \global\let\svgwidth\undefined%
  \global\let\svgscale\undefined%
  \makeatother%
  \begin{picture}(1,0.66104986)%
    \put(0,0){\includegraphics[width=\unitlength,page=1]{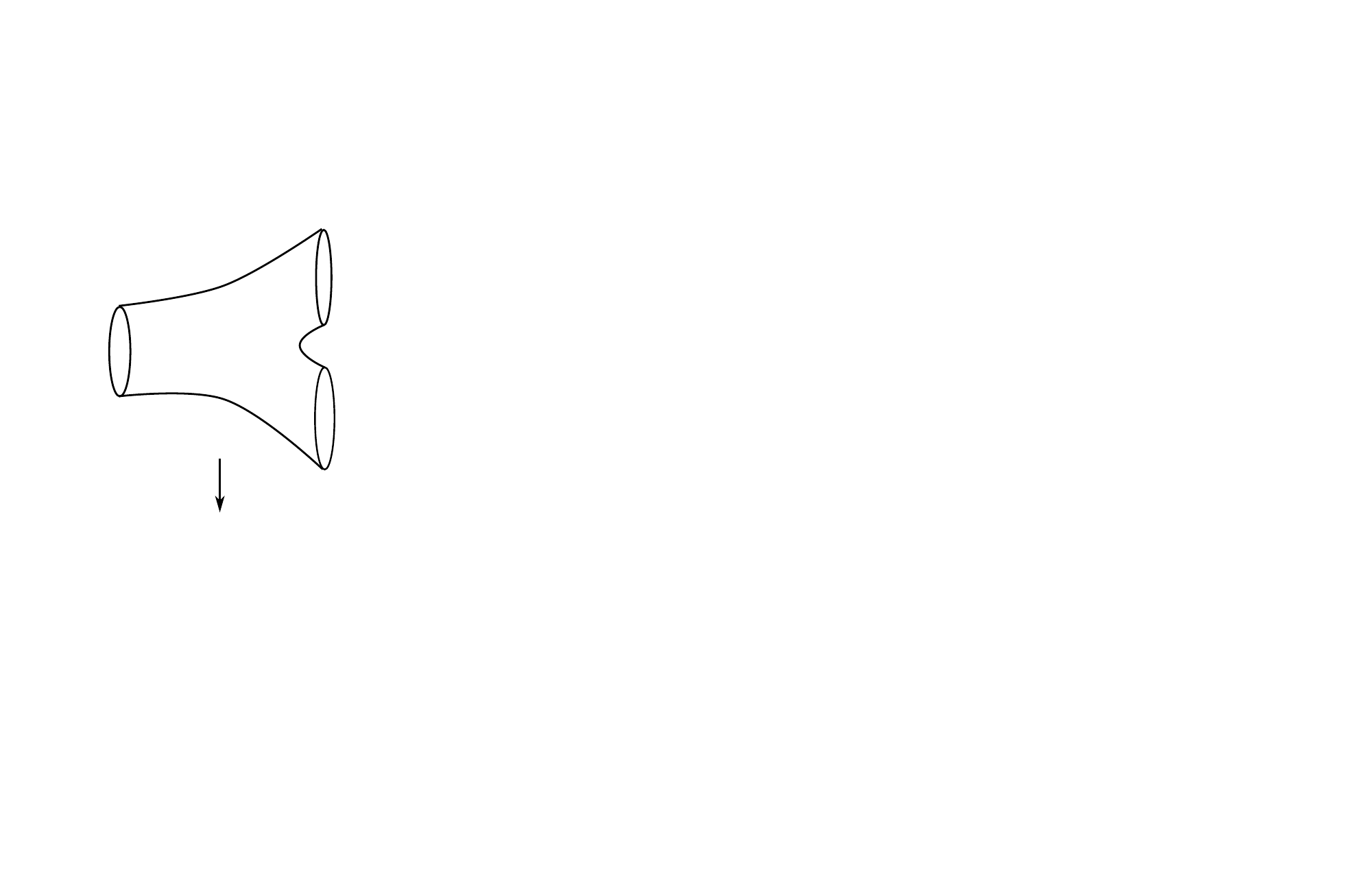}}%
    \put(0.18072871,0.29536738){\color[rgb]{0,0,0}\makebox(0,0)[lb]{\smash{$f$}}}%
    \put(0.11256531,0.56447081){\color[rgb]{0,0,0}\makebox(0,0)[lb]{\smash{$\dim f^{-1}[v,w]=2$}}}%
    \put(0,0){\includegraphics[width=\unitlength,page=2]{cycles.pdf}}%
    \put(0.13386638,0.16472089){\color[rgb]{0,0,0}\makebox(0,0)[lb]{\smash{$[v,w]$}}}%
    \put(0.04087264,0.19052583){\color[rgb]{0,0,0}\makebox(0,0)[lb]{\smash{$v$}}}%
    \put(0.25883261,0.20022261){\color[rgb]{0,0,0}\makebox(0,0)[lb]{\smash{$w$}}}%
    \put(-0.00175038,0.38270174){\color[rgb]{0,0,0}\makebox(0,0)[lb]{\smash{$c_v$}}}%
    \put(0,0){\includegraphics[width=\unitlength,page=3]{cycles.pdf}}%
    \put(0.2709032,0.39406225){\color[rgb]{0,0,0}\makebox(0,0)[lb]{\smash{$c_w$}}}%
    \put(0,0){\includegraphics[width=\unitlength,page=4]{cycles.pdf}}%
    \put(0.0380116,0.49133712){\color[rgb]{0,0,0}\makebox(0,0)[lb]{\smash{$c_{[v,w]}$}}}%
    \put(0,0){\includegraphics[width=\unitlength,page=5]{cycles.pdf}}%
    \put(0.06712305,0.10294779){\color[rgb]{0,0,0}\makebox(0,0)[lb]{\smash{$n-q=1$, $k=1$}}}%
    \put(0,0){\includegraphics[width=\unitlength,page=6]{cycles.pdf}}%
    \put(0.74844438,0.25240927){\color[rgb]{0,0,0}\makebox(0,0)[lb]{\smash{$f$}}}%
    \put(0.66510059,0.6450287){\color[rgb]{0,0,0}\makebox(0,0)[lb]{\smash{$\dim f^{-1}[u,v,w]=3$}}}%
    \put(0,0){\includegraphics[width=\unitlength,page=7]{cycles.pdf}}%
    \put(0.56970717,0.07467617){\color[rgb]{0,0,0}\makebox(0,0)[lb]{\smash{$u$}}}%
    \put(0.86793724,0.06764718){\color[rgb]{0,0,0}\makebox(0,0)[lb]{\smash{$v$}}}%
    \put(0.73639471,0.21425194){\color[rgb]{0,0,0}\makebox(0,0)[lb]{\smash{$w$}}}%
    \put(0.60083555,0.00438626){\color[rgb]{0,0,0}\makebox(0,0)[lb]{\smash{$n-q=1$, $k=2$}}}%
    \put(0,0){\includegraphics[width=\unitlength,page=8]{cycles.pdf}}%
    \put(0.89705736,0.14597024){\color[rgb]{0,0,0}\makebox(0,0)[lb]{\smash{$[u,v,w]$}}}%
    \put(0.50142552,0.26947972){\color[rgb]{0,0,0}\makebox(0,0)[lb]{\smash{$c_u$}}}%
    \put(0,0){\includegraphics[width=\unitlength,page=9]{cycles.pdf}}%
    \put(0.53556632,0.51850685){\color[rgb]{0,0,0}\makebox(0,0)[lb]{\smash{$c_{[u,w]}$}}}%
    \put(0.8528751,0.55465592){\color[rgb]{0,0,0}\makebox(0,0)[lb]{\smash{$c_{[u,v,w]}$}}}%
    \put(0,0){\includegraphics[width=\unitlength,page=10]{cycles.pdf}}%
  \end{picture}%
\endgroup%

\end{center}

In the example picture on the right hand side $c_{[u,w]}$ is a cylinder and both $c_{[u,v]}$ and $c_{[v,w]}$ are pairs of pants. The chain $c_{[u,v,w]}$ is a solid torus. The bold line is mapped to the barycentre of $[u,v,w]$ and the farther a point in $c_{[u,v,w]}$ is from this core line the closer it is mapped to $\del[u,v,w]$.

\begin{rem}
Technically the summands appearing in the expressions (\ref{disclaimer1}), (\ref{disclaimer2}) and (\ref{disclaimer3}) are elements of different chain groups $C_{n-q+k-1}F_{\del_i\sigma}$ (for varying $i$) or $C_nF_{\sigma}$ (for varying $\sigma$). In order to make sense of the sums and equations we view these summands as chains in the chain group of the larger space $F_{\del\sigma}$ or $M$. For the sake of legibility we omit all the inclusions and their induced maps on chain groups and ask the reader to interpret such equations of cycles in a sensible way. This convention holds for the rest of this paper.
\end{rem}
\begin{proof}[Proof of Proposition \ref{cyclesmot}]
Proposition \ref{genpre} shows that for all $\sigma\in\mathcal{T}_k$ the preimage $F_{\sigma}$ is an oriented topological $(n-q+k)$-manifold with boundary $F_{\del\sigma}$. Hence the notion of fundamental classes makes sense. Bear in mind that both $F_{\sigma}$ and $\del F_{\sigma}$ may be empty or have several components.
\begin{enumerate}[(i)]
\item For every $\sigma\in\mathcal{T}_0$ the preimage $F_{\sigma}$ is a closed oriented $(n-q)$-dimensional submanifold of $M$ and it is easy to arrange (i). We proceed by induction over $k$ and assume that we have constructed chains $c_{\tau}$ for all simplices $\tau\in\mathcal{T}_l$ of dimension $l<k$.
\item A standard calculation shows \[\del\sum_{i=0}^k(-1)^ic_{\del_i\sigma}=\sum_{i=0}^k(-1)^i\del c_{\del_i\sigma}=\sum_{i=0}^k(-1)^i\sum_{j=0}^{k-1}(-1)^jc_{\del_j\del_i\sigma}=0\text{.}\] Hence $\sum_{i=0}^k(-1)^ic_{\del_i\sigma}$ defines a homology class in $H_{n-q+k-1}(\del F_{\sigma})$. For every $0\leq j\leq k$ the induced maps of the inclusions satisfy
\begin{align*}
H_{n-q+k-1}(\del F_{\sigma})&\to H_{n-q+k-1}\left(\del F_{\sigma},\bigunion_{i\neq j}F_{\del_i\sigma}\right)\\
\left[\sum_{i=0}^k(-1)^ic_{\del_i\sigma}\right]&\mapsto\left[(-1)^jc_{\del_j\sigma}\right]\text{.}
\end{align*}

For every $p\in F_{\del_j\sigma}$ the image of these classes in $H_{n-q+k-1}(F_{\sigma},F_{\sigma}\setminus p)$ is the correct local orientation of $F_{\del_j\sigma}$ in the point $p$ where $F_{\del_j\sigma}\subseteq \del F_{\sigma}$ is oriented as the boundary of $F_{\sigma}$. This proves that $\sum_{i=0}^k(-1)^ic_{\del_i\sigma}$ represents the (correctly oriented) fundamental class of $F_{\del\sigma}$.

The fundamental class $[c_{\sigma}]\in H_{n-q+k}(F_{\sigma},\del F_{\sigma})$ satisfies
\begin{align}
\del\colon H_{n-q+k}(F_{\sigma},\del F_{\sigma})&\to H_{n-q+k-1}(\del F_{\sigma})\\
[c_{\sigma}]&\mapsto\left[\sum_{i=0}^k(-1)^ic_{\del_i\sigma}\right]
\end{align}
and the relative cycle $c_{\sigma}$ can be modified so as to achieve equation (\ref{disclaimer2}) on chain level.
\item We have \[\del\sum_{\sigma\in\mathcal{T}_q}c_{\sigma}=\sum_{\sigma\in\mathcal{T}_q}\sum_{i=0}^q(-1)^ic_{\del_i\sigma}=0\] since every $(q-1)$-simplex is the face of exactly two $q$-simplices and inherits different orientations from them. Hence $\sum_{\sigma\in\mathcal{T}_q}c_{\sigma}$ defines a homology class in $H_nM$. Again for every $\tau\in\mathcal{T}_q$ the inclusion $(M,\emptyset)\to\left(M,\bigunion_{\sigma\in\mathcal{T}\setminus\tau}F_{\sigma}\right)$ satisfies
\begin{align*}
H_n(M)&\to H_n\left(M,\bigunion_{\sigma\in\mathcal{T}\setminus\tau}F_{\sigma}\right)\\
\left[\sum_{\sigma\in\mathcal{T}_q}c_{\sigma}\right]&\mapsto c_{\tau}
\end{align*}
and for every $p\in F_{\tau}$ arbitrary the image of these classes in $H_n(M,M\setminus p)$ yields the correct local orientation of $M$ in $p$.\qedhere
\end{enumerate}
\end{proof}
The rest of this section is devoted to the formulation and proof of Proposition \ref{genericfine}, a genericity result which for any map $f\colon M\to N$ guarantees the existence of a triangulation of the target manifold $N$ which is (in a precise sense) generic and fine.
\begin{rp}
Let $M$ and $N$ be manifolds without boundary. We will denote the space of all continuous maps $f\colon M\to N$ by $C^0(M,N)$ and it shall be equipped with the compact-open topology. If $M$ is compact the subspace topology on $C^{\infty}(M,N)\subset C^0(M,N)$ is coarser than the weak $C^{\infty}$-topology.
\end{rp}
\begin{pr}\label{genericfine}
Let $M$ and $N$ be two closed manifolds. For every smooth map $f\colon M\to N$ and every open cover $\mathcal{U}=(U_i)_{i\in I}$ of $N$ there exists a smooth triangulation $\mathcal{T}$ of $N$ and a sequence of smooth maps $f_n\colon M\to N$ uniformly converging to $f$ such that the following properties hold:
\begin{enumerate}[(i)]
\item Every map $f_n$ intersects every simplex $\sigma\in\mathcal{T}_k$ stratum transversally.
\item For every $\sigma\in\mathcal{T}_k$ there exists an index $i\in I$ such that $\sigma(\Delta^k)\subseteq U_i$.
\end{enumerate}
\end{pr}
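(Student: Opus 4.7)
The plan has two parts: first construct a smooth triangulation $\mathcal{T}$ of $N$ subordinate to $\mathcal{U}$, and then use a transversality theorem to perturb $f$ into maps $f_n$ which intersect every simplex of $\mathcal{T}$ stratum transversally.

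For the triangulation I would start with any smooth triangulation $\mathcal{T}_0$ of $N$ (existence from \cite{Munkres1967}) and iteratively apply barycentric subdivision, producing triangulations $\mathcal{T}_0,\mathcal{T}_1,\mathcal{T}_2,\dots$. Each iterate remains a smooth triangulation in the strong sense used in this paper: every simplex $\tau\in\mathcal{T}_n$ sits as an affine sub-simplex of some original simplex $\sigma\in\mathcal{T}_0$, so a smooth embedded extension of $\tau$ is obtained by pre-composing the smooth embedded extension of $\sigma$ with an affine map $\R^{\dim\tau}\to\R^{\dim\sigma}$ and restricting to a suitable open neighbourhood of $\Delta^{\dim\tau}$. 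Fixing any Riemannian metric on $N$, compactness forces the mesh of $\mathcal{T}_n$ to tend to zero, so Lebesgue's number lemma applied to $\mathcal{U}$ yields an index $n$ large enough that every simplex of $\mathcal{T}\defeq\mathcal{T}_n$ is contained in some $U_i$. This establishes condition (ii).

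With $\mathcal{T}$ fixed, I turn to (i). For each $\sigma\in\mathcal{T}_k$ pick a smooth embedded extension $\widetilde{\sigma}\colon V_\sigma\hookrightarrow N$ with $V_\sigma\supset\Delta^k$ open in $\R^k$, and set $\Sigma_\sigma^\circ\defeq\widetilde{\sigma}(\inte\Delta^k)$. This is a smooth $k$-dimensional submanifold of $N$ without boundary. By the Thom transversality theorem the set
\[
\mathcal{R}_\sigma\defeq\{g\in C^\infty(M,N)\mid g\pitchfork\Sigma_\sigma^\circ\}
\]
is residual, and hence dense, in $C^\infty(M,N)$ equipped with the weak $C^\infty$-topology. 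Since $\mathcal{T}$ has only finitely many simplices, the intersection $\mathcal{R}\defeq\bigintersection_{\sigma\in\mathcal{T}}\mathcal{R}_\sigma$ is still dense. Any $g\in\mathcal{R}$ is transverse to the interior of every simplex of $\mathcal{T}$; since every face of a simplex of $\mathcal{T}$ is itself an element of $\mathcal{T}$, an easy induction on dimension shows that $g$ intersects every $\sigma\in\mathcal{T}$ stratum transversally.

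To produce the required sequence, fix a metric $d$ on $N$. Because $M$ is compact, the weak $C^\infty$-topology on $C^\infty(M,N)$ refines the compact-open topology, which coincides with the topology of uniform convergence. Density of $\mathcal{R}$ thus allows me to pick $f_n\in\mathcal{R}$ with $\sup_{x\in M}d(f_n(x),f(x))<1/n$; then $f_n\to f$ uniformly and every $f_n$ satisfies (i), while (ii) has already been arranged for $\mathcal{T}$. The main technical point I expect to have to justify carefully is the assertion that iterated barycentric subdivision preserves the strong smooth-triangulation structure (the existence of an open smooth embedded extension for each new simplex); once that bookkeeping is settled, the argument reduces to standard applications of Lebesgue's number lemma and Thom transversality.
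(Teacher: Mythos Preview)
Your proposal is correct and follows essentially the same route as the paper: barycentric subdivision together with a Lebesgue-number argument to obtain a smooth triangulation subordinate to $\mathcal{U}$, then Thom transversality plus Baire category in the weak $C^\infty$-topology (which refines uniform convergence on the compact $M$) to produce the approximating sequence. Your formulation is in fact slightly more explicit than the paper's in two places---you work with the open interiors $\Sigma_\sigma^\circ$ to get genuine submanifolds before invoking transversality, and you spell out the induction showing that transversality to all interiors yields stratum transversality---and your flagged bookkeeping point about barycentric subdivisions remaining smooth triangulations is exactly the one detail the paper also leaves implicit.
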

\begin{proof}
Choose a smooth triangulation $\mathcal{T}=(K,\varphi)$ of $N$ and consider the preimage $\varphi^{-1}\mathcal{U}\defeq (\varphi^{-1}U_i)_{i\in I}$ which is an open cover of $\vert K\vert$. Since $N$ is compact this open cover has a Lebesgue number with respect to some standard metric on $\vert K\vert$. After barycentric subdivision we can assume that every simplex of $\vert K\vert$ is contained in some $f^{-1}U_i$, i.e. its image is contained in $U_i$.

For every smooth, embedded simplex $\sigma\colon\Delta^k\to N$ the subset \[\left\{f\in C^{\infty}(M,N)\middle\vert f\pitchfork\im\sigma\right\}\subseteq C^{\infty}(M,N)\] is a residual in the weak $C^{\infty}$-topology, i.e. it is the countable intersection of open and dense subsets \cite[Transversality Theorem 2.1]{Hirsch1976}. Moreover the Baire category theorem applies to the weak $C^{\infty}$-topology, i.e. every residual set is dense. The set
\begin{align*}
&\left\{g\in C^{\infty}(M,N)\middle\vert\text{every simplex intersects }g\text{ stratum transversally}\right\}\\
=&\bigintersection_{\sigma\text{ simplex of }\mathcal{T}}\left\{g\in C^{\infty}(M,N)\middle\vert\sigma\text{ intersects }g\text{ stratum transversally}\right\}						
\end{align*}
is the countable intersection of residual sets, hence itself residual and therefore dense. Since the compact-open topology is coarser than the weak $C^{\infty}$-topology the claim follows.
\end{proof}

For the rest of this paper $N^q$ always denotes a smooth $q$-manifold. At the beginning we allow $N$ to be disconnected, to have non-empty boundary or to be non-compact. Let us recall Theorem \ref{estimate} which will hold for this general class of target manifolds. We will quickly see that we can restrict ourselves to the case where $N$ is closed and connected.
\begingroup
\def\thesa{\ref{estimate}}
\begin{sa}
Every continuous map $f\colon T^n\to N^q$ admits a point $y\in N^q$ such that the rank of the restriction homomorphism satisfies \[\rk\left[H^1(T^n;\Z)\to H^1(f^{-1}(y);\Z)\right]\geq n-q\text{.}\]
\end{sa}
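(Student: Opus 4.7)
The plan is to execute the cohomological analogue of the metric waist argument sketched at the end of Section~1, with the cohomological Filling Lemma \ref{fill} replacing the metric isoperimetric inequality for chains in $S^n$.

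First I reduce to the case that $N$ is closed and connected: the image $f(T^n)$ lies in a single component, and a compact neighbourhood of it embeds into a closed connected $q$-manifold, which only strengthens the statement. By upper semi-continuity of $\width_1$ (Proposition \ref{upper}) it suffices to treat smooth $f$, since if $\width_1(f)\leq n-q-1$ then $\width_1(\widetilde f)\leq n-q-1$ for all smooth $\widetilde f$ sufficiently $C^0$-close to $f$. Arguing by contradiction, suppose $\width_1(f)\leq n-q-1$; by Remark \ref{widthideal}(ii) the ideal $\kappa_f(y)\cap H^1(T^n;\Z)$ has rank at least $q+1$ at every $y\in N$. Continuity of $\kappa_f$ (Proposition \ref{continuity}) applied to a decreasing sequence of closed neighbourhoods of $y$ then furnishes, for each $y\in N$, an open $U_y\ni y$ together with $q+1$ linearly independent classes $\alpha_1^y,\dots,\alpha_{q+1}^y\in H^1(T^n;\Z)$ already dying in $H^1\!\left(f^{-1}(\overline{U_y})\right)$.

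Proposition \ref{genericfine} now yields a smooth $\widetilde f$ uniformly close to $f$ and a smooth triangulation $\mathcal T$ of $N$, subordinate to $(U_y)_{y\in N}$, such that $\widetilde f$ intersects every simplex stratum transversally; upper semi-continuity lets me pass to $\widetilde f$ without loss. Proposition \ref{cyclesmot} equips each simplex $\sigma\in\mathcal T_k$ with a singular chain $c_\sigma\in C_{n-q+k}(F_\sigma;\Z)$ satisfying $\partial c_\sigma=\sum_i(-1)^i c_{\partial_i\sigma}$, and with $\sum_{\sigma\in\mathcal T_q}c_\sigma$ representing the fundamental class $[T^n]\in H_n(T^n;\Z)$. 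By the subordination, for every $\sigma\in\mathcal T$ the preimage $F_\sigma$ inherits $q+1$ linearly independent classes from $H^1(T^n;\Z)$ that restrict to zero on it.

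The core of the proof is an inductive filling over the skeletal dimension $k=0,1,\dots,q$. Using Filling Lemma \ref{fill}, I construct chains $b_\sigma\in C_{n-q+k+1}(T^n;\Z)$ with $\partial b_\sigma=c_\sigma-\sum_i(-1)^i b_{\partial_i\sigma}$: at $k=0$, the $q+1$ vanishing classes on $F_v$ drive the lemma to exhibit $c_v$ as a boundary in $T^n$; at higher $k$, the inductively constructed fills over the faces of $\sigma$ combine with $c_\sigma$ into a closed relative $(n-q+k)$-cycle, which the filling lemma, still fed by the $q+1$ vanishing classes on $F_\sigma$, fills by $b_\sigma$. Summing the top-dimensional fills $\sum_{\sigma\in\mathcal T_q} b_\sigma$ yields a chain whose boundary equals $\sum_{\sigma\in\mathcal T_q}c_\sigma=[T^n]$, contradicting the non-vanishing of $[T^n]$. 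The main obstacle is the cohomological Filling Lemma itself together with the coherence of the patched fills: in contrast to the metric setting, where volumes add and classical isoperimetry produces bounding chains, here ``smallness'' is measured by the size of the annihilating ideal in $H^*(T^n;\Z)$, and the purely algebraic datum ``$q+1$ independent $H^1$-classes vanish on $F_\sigma$'' must be converted into honest singular chains that glue consistently across~$\mathcal T$. This is presumably the role of the cycle space $cl^{n-q}(T^n)$ introduced in Section~3, namely the natural ambient in which such fillings can both be produced and patched across strata; constructing it together with the filling inequality it supports is where I expect the real technical work to live.
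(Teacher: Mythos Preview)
Your outline is essentially the paper's own strategy: reduce to closed connected $N$, approximate by a smooth map transverse to a fine triangulation, build the chains $c_\sigma$ of Proposition~\ref{cyclesmot}, and cone the resulting cycle using the Filling Lemma. Two points deserve comment.

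First, a small but genuine issue: you work with $\Z$ coefficients throughout, but Proposition~\ref{cyclesmot} requires $N$ to be $R$-oriented, and the theorem allows non-orientable $N$. The paper therefore runs the chain-level argument in $C_*(T^n;\Z_2)$ (so the canonical cycle lives in $cl^{n-q}C_*(T^n;\Z_2)$), while the rank hypothesis in the Filling Lemma must remain over $\Z$, since its proof uses covering spaces. This mixture is unavoidable and you should flag it.

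Second, and more substantively, the inductive step is where your sketch is vaguest, and it is also where the one non-obvious idea lives. You write that at stage $k$ the cycle $c_\sigma-\sum_i(-1)^i b_{\partial_i\sigma}$ is filled ``by the filling lemma, still fed by the $q+1$ vanishing classes on $F_\sigma$''. But this cycle does not live in $F_\sigma$: the pieces $b_{\partial_i\sigma}$ were produced inside the filled spaces $L_{\partial_i\sigma}=\Fill(k_{\partial_i\sigma})$, not inside $F_\sigma$. The paper's remedy is to form an intermediate space $K_\sigma$ by gluing to $F_\sigma$ all the cells used in the earlier fillings $L_\tau$ over the faces $\tau$ of $\sigma$, so that the cycle in question genuinely sits in $K_\sigma$; one then applies the Filling Lemma to $k_\sigma\colon K_\sigma\to T^n$. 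The whole argument hinges on showing that $\rk H^1(k_\sigma;\Z)$ is still below $n-q$, and this is exactly why property~(ii) of Lemma~\ref{fill} is there: each pair $(\Fill(k),K)$ is homologically $1$-connected, so attaching the old filling cells to $F_\sigma$ creates no new $H_1$ and hence $\rk H^1(k_\sigma;\Z)=\rk H^1(f_\sigma;\Z)<n-q$. Your proposal does not isolate this mechanism; without it there is no reason the inductive filling should go through.
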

\addtocounter{sa}{-1}
\endgroup
\begin{rem}\label{sharp}
\begin{enumerate}[(i)]
\item This inequality is non-vacuous only if $n>q$ which we will tacitly assume 
. Furthermore it shows $\width_1(T^n/N)\geq n-q$.
\item Let us assume for the moment that we have proven the theorem for closed connected $N$. We will explain how the theorem extends to manifolds which are possibly disconnected, non-compact or have non-empty boundary. Since $T^n$ is connected we can restrict the target of $f$ to the component which is hit. If $N$ had boundary consider the inclusion $N\hookrightarrow D$ into the double $D$ of $N$. Since $D$ has no boundary we can apply the theorem to the composition \[T^n\stackrel{f}{\longrightarrow}N\hookrightarrow D\] yielding the theorem for $N$.

If $N$ is non-compact we choose a sequence $N_1\subset N_2\subset\ldots\subset N$ such that each $N_i$ is a smooth compact codimension $0$ submanifold with boundary and $\bigunion_{i=1}^{\infty}\inte N_i=N$ (such an exhaustion exists by a strong form of the Whitney embedding theorem where every (even non-compact) manifold can be embedded into some $\R^N$ with closed image). Since $f(T^n)$ is compact it is contained in $N_i$ for some $i\gg 0$, i.e. we can view $f$ as a map $T^n\to N_i$ and we already deduced the theorem for compact manifolds with boundary. For the rest of this paper we will assume the target manifold $N$ to be closed and connected.
\end{enumerate}
\end{rem}
The theorem will essentially follow from the following
\begin{pr}\label{essential}
Let $f\colon T^n\to N^q$ be a smooth map where $N$ is a closed manifold together with a smooth triangulation $\mathcal{T}$ the simplices of which intersect $f$ stratum transversally. Then there exists a simplex $\sigma\in\mathcal{T}_k$ such that the preimage $F_\sigma\defeq f^{-1}\sigma(\Delta^k)$ satisfies \[\rk\left[H^1(T^n;\Z)\to H^1(F_\sigma;\Z)\right]\geq n-q\text{.}\]
\end{pr}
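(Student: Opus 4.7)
The plan is to argue by contradiction: assume $\rk[H^1(T^n;\Z) \to H^1(F_\sigma;\Z)] < n - q$ for every simplex $\sigma \in \mathcal{T}_k$ at every dimension $k \in \{0, \ldots, q\}$. Equivalently, at every simplex the restriction kernel $\kappa_f(\sigma(\Delta^k))$ contains a rank-$\geq q + 1$ subgroup of $H^1(T^n;\Z)$. We will derive a contradiction from $[T^n] \neq 0 \in H_n(T^n;\Z)$ by manufacturing, out of these large restriction kernels, an explicit bounding chain for a cycle representing $[T^n]$.

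First I would invoke Proposition \ref{cyclesmot} to obtain the compatible family $c_\sigma \in C_{n-q+k}(F_\sigma;\Z)$ (for $\sigma \in \mathcal{T}_k$, $k = 0, \ldots, q$) satisfying the boundary relations of Proposition \ref{cyclesmot} (ii) and with top-level sum $\sum_{\sigma \in \mathcal{T}_q} c_\sigma$ representing $[T^n]$. Following the framework sketched in the introduction, this collection is best viewed as a \emph{simplicial chain with values in the space $cl^{n-q}(T^n)$ of $(n-q)$-cycles of $T^n$}, with vertices landing on cycles $c_v = [F_v]$ and higher simplices giving compatible higher-dimensional fillings.

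Second, I would fill inductively over $k$. For each $\sigma \in \mathcal{T}_k$ I would produce a chain $d_\sigma \in C_{n-q+k+1}(T^n;\Z)$ with
\[
\partial d_\sigma \;=\; c_\sigma \;-\; \sum_{i=0}^{k} (-1)^i d_{\partial_i \sigma},
\]
where the right-hand side is interpreted inside $T^n$ via the inclusions $F_{\partial_i \sigma} \hookrightarrow T^n$. The right-hand side is automatically a cycle in $C_{n-q+k}(T^n;\Z)$ by the inductive construction. The existence of $d_\sigma$ is the content of the Filling Lemma~\ref{fill}: under the contradictory hypothesis the kernel $\kappa_f(\sigma)$ contains at least $q + 1$ independent classes of $H^1(T^n)$, and via the exterior-algebra structure $H^*(T^n) = \Lambda^* H^1(T^n;\Z)$ these classes provide enough Poincaré-dual room in $T^n$ to bound the relative cycle $c_\sigma - \sum_i (-1)^i d_{\partial_i \sigma}$ in the appropriate degree.

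At the top level $k = q$, summing over $\mathcal{T}_q$ gives
\[
\partial \Big( \sum_{\sigma \in \mathcal{T}_q} d_\sigma \Big) \;=\; \sum_{\sigma \in \mathcal{T}_q} c_\sigma \;-\; \sum_{\sigma \in \mathcal{T}_q} \sum_{i=0}^{q} (-1)^i d_{\partial_i \sigma},
\]
and the double sum on the right cancels pairwise because each $(q-1)$-simplex of the closed triangulated manifold $N$ is a face of exactly two $q$-simplices, inheriting opposite induced orientations. Hence $\sum_{\sigma \in \mathcal{T}_q} c_\sigma$ is a boundary in $C_*(T^n;\Z)$, which contradicts the fact that it represents the nonzero fundamental class $[T^n]$ by Proposition \ref{cyclesmot} (iii). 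The expected main obstacle is the Filling Lemma \ref{fill} itself: one needs a genuine chain-level cohomological filling inequality that converts the rank condition on the degree-$1$ restriction kernel into a bounding chain compatible with the simplicial combinatorics of $\mathcal{T}$. This is the homological analogue of the metric isoperimetric fillings in Gromov's waist-of-the-sphere argument, and it is exactly here that the full strength of $H^*(T^n)$ being an exterior algebra on $H^1$ is expected to enter.
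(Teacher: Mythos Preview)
Your overall architecture matches the paper's: argue by contradiction, assemble the family $c_\sigma$ from Proposition~\ref{cyclesmot}, fill inductively to cone off the resulting $q$-cycle in the space of $(n-q)$-cycles, and contradict $[T^n]\neq 0$. The gap is in the filling step, and it is not a minor one.

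You propose chains $d_\sigma\in C_{n-q+k+1}(T^n;\Z)$ with $\partial d_\sigma=c_\sigma-\sum_i(-1)^i d_{\partial_i\sigma}$, and say this ``is the content of the Filling Lemma~\ref{fill}''. But the right-hand side is an $(n-q+k)$-cycle in $T^n$, and for $k<q$ the group $H_{n-q+k}(T^n)$ is nonzero; there is no reason such a cycle should bound in $T^n$. The rank hypothesis on $H^1(T^n)\to H^1(F_\sigma)$ says nothing about a cycle that has already been pushed into $T^n$ and mixed with the earlier fillings $d_{\partial_i\sigma}$, which need not be supported anywhere near $F_\sigma$. Your appeal to ``Poincar\'e-dual room'' and the exterior-algebra structure does not supply a mechanism here.

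What the paper actually does is different and is precisely what you are missing. Filling Lemma~\ref{fill} does not produce a chain; it produces, for any map $k\colon K\to T^n$ with $\rk H^1(k;\Z)<n-q$, a relative CW pair $(\Fill(k),K)$ with a compatible map to $T^n$ such that $H_{\geq n-q}(\Fill(k))=0$. The proof then inductively builds auxiliary spaces $K_\sigma\subset L_\sigma$ over $T^n$: one glues onto $F_\sigma$ the cells $e_\tau$ coming from the fillings $L_\tau$ of all proper faces $\tau$ of $\sigma$, checks that the resulting $K_\sigma\to T^n$ still has $\rk H^1<n-q$ (this is where homological $1$-connectedness of the pairs $(L_\tau,F_\tau)$ is used), and applies the Filling Lemma again to get $L_\sigma$ with $H_{\geq n-q}(L_\sigma)=0$. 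The crucial point is that the cycle $c_\sigma-\sum_i(-1)^i\widehat{w_{\partial_i\sigma}}$ then lives in $K_\sigma\subset L_\sigma$, and it bounds \emph{there} because $L_\sigma$ has no homology in that degree; only afterwards is the bounding chain pushed forward to $T^n$. Without these auxiliary spaces the inductive filling simply does not go through.

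A secondary issue: the paper works with $\Z_2$ coefficients for the cycle space because $N$ is not assumed orientable, so Proposition~\ref{cyclesmot} is applied with $R=\Z_2$; your use of $\Z$ throughout would require $N$ orientable.
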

\begin{proof}[Proof of Theorem \ref{estimate} assuming Proposition \ref{essential}]
Assume there is a continuous map $f\colon T^n\to N^q$ such that $\width_1(f)<n-q$. Recall the cohomological restriction kernel $\kappa_f$ from Definition \ref{restrictionkernel} given by
\[\kappa_f(A)\defeq\ker\left[H^*T^n\to H^*f^{-1}A\right]\]
for a subset $A\subseteq N$. Since $T^n$ is compact $\kappa_f$ satisfies the continuity axiom. 

Remark \ref{widthideal} (ii) implies
\[\rk\left[H^1T^n\to H^1f^{-1}y\right]=\rk\left[\bigslant{H^1T^n}{\kappa_f(y)\intersection H^1T^n}\right]\]
for every $y\in N$ and therefore the condition $\width_1(f)<n-q$ translates into
\[\rk\left[\bigslant{H^1T^n}{\kappa_f(y)\intersection H^1T^n}\right]<n-q\text{.}\]
Choose an arbitrary metric on $N$. With respect to this metric we have \[\bigintersection_{m=1}^{\infty}\overline{B\left(y,\frac{1}{m}\right)}=\{y\}\text{.}\] Continuity of $\kappa_f$ yields
\[\bigunion_{m=1}^{\infty}\kappa_f\left(\overline{B\left(y,\frac{1}{m}\right)}\right)=\kappa_f(y)\text{.}\]
Since $H^*T^n$ is finitely generated there exists an $m(y)\gg 0$ depending on $y$ such that \[\kappa_f\left(\overline{B\left(y,\frac{1}{m(y)}\right)}\right)=\kappa_f(y)\text{.}\] For every subset $A\subset f^{-1}B\left(y,\frac{1}{m(y)}\right)$ we have 

\begin{align}
\rk[H^1T^n\to H^1A]\leq\rk\left[H^1T^n\to H^1\left(f^{-1}B\left(y,\frac{1}{m(y)}\right)\right)\right]\\
\leq\rk\left[\bigslant{H^1T^n}{\kappa_f\left(\overline{B\left(y,\frac{1}{m(y)}\right)}\right)\intersection H^1T^n}\right]=\rk\left[\bigslant{H^1T^n}{\kappa_f(y)\intersection H^1T^n}\right]<n-q\text{.}\label{prop2}
\end{align}

Every continuous map $f$ can be uniformly approximated by smooth maps $g_m$. Since $M$ and $N$ are compact and metrisable the upper semi-continuity of $\width_1$ (cf. Proposition \ref{upper}) implies $\width(g_m)\leq\width(f)<n-q$ for $m\gg 0$. So without loss of generality we can assume that $f$ itself is smooth.

Since $N$ is compact we can choose finitely many $y_i\in N$ such that $\left(B\left(y_i,\frac{1}{2m(y_i)}\right)\right)_i$ is an open cover of $N$. Applying Proposition \ref{genericfine} to the smooth map $f\colon T^n\to N^q$ and this finite open cover yields a smooth triangulation $\mathcal{T}$ of $N$ and a sequence of smooth maps $f_m\colon M\to N$ uniformly converging to $f$ such that the following two properties hold:
\begin{enumerate}[(i)]
\item Every map $f_m$ intersects every simplex $\sigma\in\mathcal{T}_k$ stratum transversally.
\item For every simplex $\sigma\in\mathcal{T}_k$ there exists an $i$ (which depends on $\sigma$) such that \begin{align}\sigma(\Delta^k)\subseteq B\left(y_i,\frac{1}{2m(y_i)}\right)\text{.}\label{inclusion}\end{align}
\end{enumerate}
Without loss of generality we can assume that the uniform convergence $f_m\to f$ is controlled by \begin{align}\Vert f_m-f\Vert_{\infty}<\frac{1}{2m}\label{controlled}\text{.}\end{align}

Let $M\defeq\max_i m(y_i)$. For any $\sigma\in\mathcal{T}_k$ we have
\begin{align*}
f_M^{-1}\sigma(\Delta^k)&\subseteq f_M^{-1}B\left(y_i,\frac{1}{2m(y_i)}\right)\text{ for some $i$ by (\ref{inclusion})}\\
&\subseteq f^{-1}B\left(y_i,\frac{1}{m(y_i)}\right)\text{ by (\ref{controlled}).}
\end{align*}
Estimate (\ref{prop2}) shows that $f_M$ contradicts Proposition \ref{essential}.
\end{proof}

\begin{rem}
In the future whenever we want to prove a lower bound for cohomological waist we will reduce it to the proof of a statement similar to Proposition \ref{essential}. We will not carry out this reduction in detail anymore.
\end{rem}
Recall that Theorem \ref{estimate} which we are trying to prove is about a map $f\colon T^n\to N^q$ and its fibres $F_y\defeq f^{-1}y$. Initially we will apply the following statements to the inclusions of the fibres $f_y\colon F_y\hookrightarrow T^n$.
\begin{lm}[Filling Lemma]\label{fill}
Let $K$ be a CW complex, $k\colon K\to T^n$ a continuous map and $\rk H^1(k;\Z)<n-q$. There exists a relative CW complex $(\Fill(k),K)$ and an extension $\fil(k)\colon\Fill(k)\to T^n$ such that the diagram
\[\begin{xy}
  \xymatrix{
  \Fill(k)\ar[rd]^{\fil(k)} & \\
  K\ar[r]_{k}\ar@{^{(}->}[u]^{\iota} & T^n
  }
\end{xy}\]
commutes and the following properties hold.
\begin{enumerate}[(i)]
\item Up to homotopy $\Fill(k)$ is the disjoint sum of a number of tori, one copy for each component of $K$, i.e. \[\Fill(k)\simeq T^{r_1}\amalg T^{r_2}\amalg\ldots\] and the dimensions satisfy $r_i<n-q$. In particular we have $H_{\geq n-q}(\Fill(k);G)=0$ and $H_{\geq n-q}(\iota;G)=0$ for any abelian coefficient group $G$.
\item $(\Fill(k),K)$ is homologically $1$-connected
\item $\rk H^1(\fil(k);\Z)=\rk H^1(k;\Z)$
\end{enumerate}
\end{lm}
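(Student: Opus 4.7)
The plan is to build $\Fill(k)$ component by component, exploiting that $T^n$ is a $K(\Z^n,1)$ and so maps into $T^n$ are classified up to homotopy by their effect on $H_1$. On each component of $K$ the restriction $k|_{K_j}$ will be factored through a torus of minimal possible dimension, and that factorization will be realized as an honest extension via a mapping cylinder.

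First I reformulate the hypothesis. Since $H_0$ is free, universal coefficients give $H^1(X;\Z)\cong\Hom(H_1(X),\Z)$, and for a homomorphism into a free abelian group the rank of the $\Z$-dual equals the rank of the image. Hence the hypothesis becomes $r\defeq\rk(\im k_*)<n-q$, where $k_*\colon H_1(K)\to\Z^n$. Decompose $K=\coprod_j K_j$ into connected components and let $L_j\defeq (k|_{K_j})_*(H_1(K_j))\subseteq\Z^n$ with $r_j\defeq\rk L_j$. Since $L_j\subseteq\sum_j L_j=\im k_*$, we obtain $r_j\leq r<n-q$ for every $j$.

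For each $j$, choose a CW model of $K(L_j,1)$ as a torus $T^{r_j}$ together with a cellular map $\iota_j\colon T^{r_j}\to T^n$ realizing the inclusion $L_j\hookrightarrow\Z^n$ on $\pi_1$. Because $T^n$ is aspherical and the composition $\pi_1(K_j)\to\Z^n$ factors as $\pi_1(K_j)\twoheadrightarrow L_j\hookrightarrow\Z^n$, classical obstruction theory for maps into $T^{r_j}=K(L_j,1)$ produces a cellular $g_j\colon K_j\to T^{r_j}$ with $\iota_j\circ g_j\simeq k|_{K_j}$; by definition of $L_j$ the induced map $g_{j*}\colon H_1(K_j)\twoheadrightarrow L_j$ is surjective. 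Let $Y_j$ be the mapping cylinder of $g_j$, viewed as a CW complex containing $K_j$ as a subcomplex and deformation retracting onto $T^{r_j}$. Set $\Fill(k)\defeq\coprod_j Y_j$ with the tautological inclusion $\iota\colon K\hookrightarrow\Fill(k)$. To extend $k$, fix a homotopy $H_j\colon K_j\times[0,1]\to T^n$ with $H_j(\cdot,0)=k|_{K_j}$ and $H_j(\cdot,1)=\iota_j\circ g_j$, and define $\fil(k)$ to equal $H_j$ along the cylinder direction of $Y_j$ and $\iota_j$ on the $T^{r_j}$-end; these agree on the gluing locus by construction of $H_j$.

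Verification of (i)--(iii) is then straightforward. For (i), $\Fill(k)\simeq\coprod_j T^{r_j}$ with $r_j<n-q$, so $H_d(\Fill(k);G)=0$ for $d\geq n-q$, which also forces the induced map $H_{\geq n-q}(\iota;G)$ to vanish. For (ii), the inclusion $K_j\hookrightarrow Y_j$ is identified under the deformation retraction with $g_j\colon K_j\to T^{r_j}$: this is an isomorphism on $H_0$ (both are connected) and is surjective on $H_1$, so the long exact sequence of the pair $(Y_j,K_j)$ gives $H_0(Y_j,K_j)=H_1(Y_j,K_j)=0$, and additivity over components yields $H_0(\Fill(k),K)=H_1(\Fill(k),K)=0$. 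For (iii), the image of $\fil(k)_*\colon H_1(\Fill(k))=\bigoplus_j L_j\to\Z^n$ equals $\sum_j L_j=\im k_*$, so by the universal-coefficients argument from the first paragraph $\rk H^1(\fil(k))=r=\rk H^1(k)$. The main technical hurdle is arranging the homotopies $H_j$ and the cellular approximation of $g_j$ so that the resulting $\fil(k)$ is genuinely continuous on the mapping-cylinder quotient; this is handled by the homotopy extension property of the CW pair $(Y_j,K_j)$ together with cellular approximation, and presents no conceptual difficulty once the $\pi_1$-factorization through $L_j$ is in hand.
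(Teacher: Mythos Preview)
Your argument is correct and follows essentially the same strategy as the paper: factor the map $k$ through a low-dimensional torus and take a mapping cylinder. The only noteworthy difference is that the paper obtains the factorisation via covering space theory (lifting $k$ strictly to the cover $T^r\times\R^{n-r}\to T^n$ corresponding to $\im\pi_1k$, so no homotopy fix-up is needed), whereas you obtain it via obstruction theory for maps into $K(L_j,1)$ and then absorb the resulting homotopy $\iota_j\circ g_j\simeq k|_{K_j}$ into the cylinder coordinate; both routes are standard and yield the same filling.
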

Before we prove the lemma we need an analysis of the discrepancy between cohomology and homology.
\begin{rem}\label{discrepancy}
\begin{enumerate}[(i)]
\item For every continuous map $f\colon X\to Y$ we have $\rk H_1(f;\Z)=\rk H^1(f;\Z)$.
\item If $f$ induces an isomorphism on $H_1$ then it induces an isomorphism on $H^1$ (both with coefficients in $\Z$).
\end{enumerate}
\end{rem}
\begin{proof}
This follows from the universal coefficient theorem.\qedhere
\end{proof}
We will frequently change our point of view between cohomology and homology and we will do so without further reference to the remark above.
\begin{nota}\label{grossklein}
From now on we will have to introduce a lot of spaces all of which come with reference maps to $T^n$. As with $f_y\colon F_y\to T^n$ these reference maps are denoted by the lower case letters corresponding to the upper case letters representing the spaces.
\end{nota}
\begin{proof}[Proof of Filling Lemma \ref{fill}]
Let us first discuss the case where $K$ is connected and let $r\defeq \rk H^1(k;\Z)$. By the naturality of the Hurewicz homomorphism the following diagram commutes.
\begin{align}\label{hurewicz}
\begin{xy}
  \xymatrix{\pi_1K\ar[r]\ar@{->>}[d]&\pi_1T^n=\Z^n\ar[d]^{\cong}\\
  H_1(K;\Z)\ar[r]&H_1(T^n;\Z)=\Z^n
  }
\end{xy}
\end{align}
This proves that $\im\pi_1k\subseteq\Z^n$ is also a rank $r$ subgroup. Consider a covering $T^r\times\R^{n-r}\to T^n$ corresponding to this subgroup. There exists a lift $\widetilde{k}\colon K\to T^r\times\R^{n-r}$ such that
\[\begin{xy}
  \xymatrix{
	& T^r\times\R^{n-r}\ar[d]\\
    K \ar[ru]^{\widetilde{k}}\ar[r]_{k} &  T^n}\end{xy}\]
commutes. On the level of fundamental groups this turns into the following diagram.
\[\begin{xy}
  \xymatrix{
	& \pi_1\left(T^r\times\R^{n-r}\right)\ar@{^{(}->}[d]\\
    \pi_1K \ar[ru]^{\pi_1\widetilde{k}}\ar[r]_{\pi_1k} & \pi_1T^n}\end{xy}\] where (by construction of the covering) the vertical arrow is the inclusion $\im\pi_1k\subset\Z^n$. Thus $\pi_1\widetilde{k}$ is obtained from $\pi_1k$ by restricting the target to $\im\pi_1k$, in particular $\pi_1\widetilde{k}$ is surjective. Using the naturality of the Hurewicz homomorphism similar to (\ref{hurewicz}) we conclude that $H_1(\widetilde{k};\Z)$ is surjective.

We want to turn $\widetilde{k}$ into the inclusion of relative CW complex. Substitute $T^r\times\R^{n-r}$ by the mapping cylinder $M_{\widetilde{k}}$ and choose a relative CW approximation $(\Fill(k),K)$ of $(M_{\widetilde{k}},K)$, i.e. there is a weak homotopy equivalence $\Fill(k)\stackrel{\simeq}{\longrightarrow}M_{\widetilde{k}}$ restricting to the identity on $K$. Define $\iota$ and $\fil(k)$ as in the following diagram.
\[\begin{xy}\xymatrix{
\Fill(k)\ar@/^5mm/[rr]^{\fil(k)}\ar[r]^{\simeq} & M_{\widetilde{k}}\ar[r] & T^n\\
 & K\ar@{^{(}->}[lu]^{\iota}\ar[u]\ar[ru]_k &
  }\end{xy}\]
The induced map $H_*(\iota;\Z)$ is an isomorphism for $*=0$ and surjective for $*=1$ since $H_*(\widetilde{k};\Z)$ has these properties. This implies that $(\Fill(k),K)$ is homologically $1$-connected. The surjectivity of $H_1(\iota;\Z)$ also implies $\im H_1(\fil(k);\Z)=\im H_1(k;\Z)$ and together with Remark \ref{discrepancy} (i) we get property (iii). If $K$ is not connected we can apply the construction above to all of its components.
\end{proof}
\begin{rem}\label{expfill}
\begin{enumerate}[(i)]
\item Observe that in the lemma above it is important to assume that the rank $\rk H^1(k;\Z)$ is measured with coefficients in $\Z$. This is due to the usage of the Hurewicz theorem and covering space theory. There is no simple analogue to the Filling Lemma with coefficients in $\Z_2$ since e.g. the double cover map $k\colon S^1\to S^1$ satisfies $\rk H^1(k;\Z_2)=0$ but cannot be filled.
\end{enumerate}
\end{rem}
Actually we could finish the proof of Theorem \ref{estimate} right now but we want to introduce the language of \emph{cycle spaces} which offer a more conceptual viewpoint.

In the following part two kinds of chain complexes will appear, namely singular and the simplicial chain complexes and it should always be clear from the context which one we mean depending on whether we apply it to topological spaces or simplicial sets. Nevertheless in order to avoid confusion we will consistently denote the singular chain complex by $C_*$ and the simplicial chain complex by $C_{\bullet}$.

Let $f\colon M^n\to N^q$ be a smooth map between closed $R$-oriented manifolds, $\sigma$ a smooth embedded $k$-simplex in $N$ which intersects $f$ stratum transversally. Recall Proposition \ref{cyclesmot} by which we can assign to every vertex $v$ of $\sigma$ an $(n-q)$-cycle $c_v$ in $M$ and to any $l$-dimensional face $\tau$ of $\sigma$ an $(n-q+l)$-chain $c_{\tau}$ such that we have \[\del c_{\tau}=\sum_{i=0}^lc_{\del_i\tau}\text{.}\] This motivates the following

\begin{de}[cf. {\cite[Section 2.2]{Gromov2010}}]
Let $(D_*,\del)$ be a chain complex of abelian groups with differential $\del_n\colon D_n\to D_{n-1}$. The \emph{space of $(n-q)$-cycles in $D_*$} is a simplicial set denoted by $cl^{n-q}(D_*,\del)$ the level sets of which are given by \[\left(cl^{n-q}(D_*,\del)\right)_k\defeq \left(cl^{n-q}D_*\right)_k\defeq \Hom\left(C_{\bullet}\Delta[k],D_{*+(n-q)}\right)\text{.}\] Some explanations are in order.
\begin{enumerate}[(i)]
\item $\Delta[k]$ denotes the $k$-dimensional standard simplex in the category $\s\Set$.
\item $C_{\bullet}\Delta[k]$ denotes its normalised chain complex, i.e. the chain groups are generated only by the non-degenerate simplices of $\Delta[k]$.
\item The $\Hom$ set is meant as the set of morphisms of chain complexes of abelian groups.
\end{enumerate}
The right hand side defines a contravariant functor $\Delta\to\Set$ where $\Delta$ is the ordinal number category. This turns $cl^{n-q}D_*$ into a simplicial set. 
\end{de}

The main example of a chain complex $D_*$ to which we want to apply the construction above is the singular chain complex of the source manifold, e.g. a torus.

\begin{rem}\label{bijection}
\begin{enumerate}[(i)]
\item Consider the unique non-degenerate $k$-simplex $c_k\in\Delta[k]_k\subseteq C_k\Delta[k]$. For every $\sigma\in\left(c^{n-q}D_*\right)_k=\Hom\left(C_\bullet\Delta[k],D_{*+(n-q)}\right)$ we will call the image of $c_k$ under $\sigma$ the \emph{top chain of $\sigma$} and denote it by $\ev_k\sigma\in D_{(n-q)+k}$. Sometimes it will be convenient to abbreviate it by $\widehat{\sigma}$.

The maps $\ev_\bullet$ extend and fit together such that \[\ev_\bullet\colon C_\bullet cl^{n-q}D_*\to D_{\bullet+(n-q)}\] is a morphism of chain complexes.

\item We are now able to give a more intuitive description of the simplices of $cl^{n-q}D_*$. The $0$-simplices $c\in\left(cl^{n-q}D_*\right)_0$ are precisely given by their top chains which are $(n-q)$-cycles $c\in\ker\del_{n-q}\subseteq D_{n-q}$. A $1$-simplex $c_{01}\in\left(cl^{n-q}D_*\right)_1$ is given by its two faces $\del_0c_{01}=\vcentcolon c_0\in(cl^{n-1}D_*)_0$ and $\del_1c_{01}=\vcentcolon c_1\in(cl^{n-q}D_*)_0$ and its top chain $\widehat{c_{01}}\in D_{(n-q)+1}$ satisfying

\begin{align}\label{filling}
\del\widehat{c_{01}}=\widehat{c_0}-\widehat{c_1}\text{.}
\end{align}

Thus a $1$-simplex $c_{01}\in\left(cl^{n-q}D_*\right)_1$ consists of two \emph{homologous} chains $\widehat{c_0},\widehat{c_1}\in\ker\del_{n-q}\subseteq D_{n-q}$ \emph{together with a choice of filling $\widehat{c_{01}}\in D_{(n-q)+1}$} satisfying (\ref{filling}). A $2$-simplex $c_{012}\in\left(cl^{n-q}D_*\right)_2$ consists of its three $1$-dimensional faces $c_{12},c_{02},c_{01}\in\left(cl^{n-q}D_*\right)_1$ whose respective $0$-dimensional faces agree and a filling $\widehat{c_{012}}\in D_{(n-q)+2}$ satisfying
\begin{align}\label{filling2}
\del\widehat{c_{012}}=\widehat{c_{12}}-\widehat{c_{02}}+\widehat{c_{01}}\text{.}
\end{align}
In particular for such an $2$-simplex $c_{012}$ to exist the right hand side of (\ref{filling2}) needs to be an $(n-q)+1$-boundary in $D_*$.
\end{enumerate}
\end{rem}

Generally the following lemma tells how $(k+1)$ simplices $\varphi_i\in\left(cl^{n-q}D_*\right)_k$ can be glued together to form the faces of a simplex $\sigma\in\left(cl^{n-q}D_*\right)_{k+1}$ if the obvious homological restriction in $D_*$ vanishes.
\begin{lm}[Gluing Lemma]\label{extension}
Let $\varphi_0,\dots,\varphi_{k+1}\in\left(cl^{n-q}D_*\right)_k$ such that \[\del_i\varphi_j=\del_{j-1}\varphi_i,\quad 0\leq i<j\leq k+1\text{.}\] If there exists an element $\overline{\sigma}\in D_{(n-q)+k+1}$ with \[\del\overline{\sigma}=\sum_{i=0}^{k+1}(-1)^i\widehat{\varphi_i}\] there is a unique $\sigma\in\left(cl^{n-q}D_*\right)_{k+1}$ satisfying $\widehat{\sigma}=\overline{\sigma}$ and $\del_i\sigma=\varphi_i$.
\end{lm}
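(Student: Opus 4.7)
The plan is to build $\sigma$ by specifying a chain map $\sigma\colon C_\bullet \Delta[k+1] \to D_{*+(n-q)}$ directly on generators. Since $C_\bullet \Delta[k+1]$ is the normalised chain complex, in each degree it is free on the non-degenerate simplices of $\Delta[k+1]$, and these are indexed by non-empty subsets $S \subseteq \{0,1,\ldots,k+1\}$. Thus once I fix $\sigma$ on each such generator I only need to check that it is compatible with the differentials.

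I would declare $\sigma$ on the unique top non-degenerate simplex $c_{k+1}$ (corresponding to the full set) to be $\overline{\sigma}$. For any proper non-degenerate simplex $\tau_S$ with $S \subsetneq \{0,\ldots,k+1\}$ I pick some $i \notin S$; then $\tau_S$ lies in the image of the $i$-th face inclusion $\Delta[k]\hookrightarrow \Delta[k+1]$, coming from a unique non-degenerate simplex $\tau'$ of $\Delta[k]$, and I set $\sigma(\tau_S) \defeq \varphi_i(\tau')$. The only genuinely delicate step is well-definedness: if $i,j \notin S$ with $i<j$, the cosimplicial identity expresses $\tau_S$ as the image of a single $(|S|-1)$-simplex of $\Delta[k-1]$ under the two different composites of face inclusions, and the hypothesis $\del_i\varphi_j = \del_{j-1}\varphi_i$ is exactly the relation needed to match the two candidate values $\varphi_j(\tau')$ and $\varphi_i(\tau'')$. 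This bookkeeping with indices is the main obstacle of the proof, though it is purely formal once set up.

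It then remains to verify that the resulting $\sigma$ commutes with $\del$ on each generator. On a proper non-degenerate generator $\tau_S$, contained in an $i$-th face, this reduces to the fact that $\varphi_i$ is itself a chain map. On the top generator $c_{k+1}$, whose boundary in $C_\bullet\Delta[k+1]$ is the alternating sum of its faces, the required identity is exactly the hypothesis $\del\overline{\sigma} = \sum_{i=0}^{k+1}(-1)^i\widehat{\varphi_i}$. The face identities $\del_i\sigma = \varphi_i$ and the top-chain identity $\widehat{\sigma} = \overline{\sigma}$ now hold by construction. Uniqueness is immediate: any candidate $\sigma$ is determined by its values on non-degenerate generators, and all of these are forced by the prescribed face and top-chain conditions.
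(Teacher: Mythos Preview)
Your argument is correct and complete: defining $\sigma$ on the free generators of the normalised chain complex $C_\bullet\Delta[k+1]$, verifying well-definedness via the cosimplicial identity $\delta^j\circ\delta^i=\delta^i\circ\delta^{j-1}$ together with the hypothesis $\del_i\varphi_j=\del_{j-1}\varphi_i$, and then checking the chain-map condition separately on proper generators (where it reduces to $\varphi_i$ being a chain map) and on the top generator (where it reduces to the hypothesis on $\overline{\sigma}$) is exactly the right approach. Note that the paper itself does not spell out a proof but defers to \cite[Lemma~4.4.4]{Alagalingam2016}; your write-up supplies precisely the standard bookkeeping that such a reference would contain, so there is nothing further to compare.
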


\begin{proof}A proof is given in \cite[Lemma 4.4.4.]{Alagalingam2016}
\end{proof}

\begin{con}\label{canonical}
Recall Proposition \ref{cyclesmot}. Let $f\colon M^n\to N^q$ be a smooth map between closed $R$-oriented manifolds, $\mathcal{T}$ an $R$-oriented triangulation of $N$ such that $f$ intersects all the simplices $\sigma\in\mathcal{T}_q$ stratum transversally. We can assign to any $\sigma\in\mathcal{T}_k$ a singular chain $c_{\sigma}\in C_{n-q+k}(F_{\sigma};R)$ such that the following properties hold:
\begin{enumerate}[(i)]
\item For $\sigma\in\mathcal{T}_0$ the chain $c_{\sigma}\in C_{n-q}(F_{\sigma};R)$ represents the (correctly oriented) fundamental class of $F_{\sigma}$.
\item For $1\leq k\leq q$ and $\sigma\in\mathcal{T}_k$ we have
\[\del c_{\sigma}=\sum_{i=0}^k(-1)^ic_{\del_i\sigma}\]
as an equation in $C_{n-q+k-1}(F_{\sigma};R)$ and $c_{\sigma}$ represents the (correctly oriented) relative fundamental class in $H_{n-q+k}\left(F_{\sigma},\del F_{\sigma};R\right)$.
\item The sum
\[\sum_{\sigma\in\mathcal{T}_q}c_{\sigma}\in C_{n}(M;R)\]
represents the (correctly oriented) fundamental class of $M$.
\end{enumerate}
For every $\sigma\in\mathcal{T}_0$ we can use Remark \ref{bijection} (ii) to turn the cycles $c_{\sigma}$ into $0$-simplices $z_{\sigma}\in\left(cl^{n-q}C_*(F_{\sigma};R)\right)_0$ satisfying $\widehat{z_{\sigma}}=c_{\sigma}$.

For higher-dimensional $\sigma\in\mathcal{T}_k$ one can inductively use Gluing Lemma \ref{extension} to construct elements $z_{\sigma}\in\left(cl^{n-q}C_*(F_{\sigma};R)\right)_k$ satisfying
\begin{align}
\del_iz_{\sigma}=z_{\del_i\sigma}\quad\text{and}\quad\widehat{z_{\sigma}}=c_{\sigma}\text{.}
\end{align}

The simplicial chain \[Z(f,\mathcal{T})\defeq \sum_{\sigma\in\mathcal{T}_q}z_{\sigma}\] can be viewed as an element in $C_qcl^{n-q}C_*(M;R)$ and it satisfies $\del Z(f,\mathcal{T})=0$. Since \[\ev_{\bullet}\colon C_{\bullet}cl^{n-q}C_*(M;R)\to D_{\bullet+(n-q)}\] is a morphism of chain complexes and maps $Z(f,\mathcal{T})$ to $\sum_{\sigma\in\mathcal{T}_q}c_{\sigma}$ we conclude that $[Z(f,\mathcal{T})]\neq 0$ in $H_qcl^{n-q}C_*(M;R)$.
\end{con}

\begin{com}
\begin{enumerate}[(i)]
\item There is an analytic analogue to the construction above. Let $M^n\subset\R^N$ be a smooth closed embedded manifold, $I_k(M)$ be the topological space of integral currents with the flat topology and $Z_k(M)\subset I_k(M)$ the subspace of cycles. In \cite{FrederickJustinAlmgren1962} Almgren proved that the homotopy groups of the latter are given by \[\pi_iZ_k(M)\cong H_{i+k}(M)\text{.}\] A priori the homotopy groups of a space do not determine its homotopy type since it could have non-zero $k$-invariants but in the case of $Z_k(M)$ the topological group completion theorem implies that the $k$-invariants of every topological abelian monoid vanish. In particular we get
\begin{align}\label{analogue}
Z_k(M)\simeq\prod_{i=0}^{n-k}K(H_{i+k}(M),i)\text{.}
\end{align}
One reasonable corollary from this is $\pi_0Z_k(M)=H_kM$. Another consequence is that
\begin{align}\label{correspondence}
\pi_qZ_{n-q}(M)\cong H_n(M)\cong\Z
\end{align}
with the generator given as follows. Let $f\colon M\subset\R^N\to\R^q$ be a generic projection. For any $y\in\R^q$ the preimage $f^{-1}(q)$ defines an $(n-q)$-dimensional integral cycle and the map
\begin{align*}
\Phi_f\colon\R^q&\to Z_{n-q}(M)\\
y&\mapsto f^{-1}(y)
\end{align*}
is continuous and maps everything outside of $\im f$ to the zero cycle. Hence it determines an element $[\Phi_f]\in\pi_qZ_{n-q}(M)$ which is independent of $f$ and corresponds exactly to the fundamental class under the correspondence (\ref{correspondence}).

\vspace{5mm}
\begin{center}
\def\svgwidth{0.7\textwidth}
\begingroup%
  \makeatletter%
  \providecommand\color[2][]{%
    \errmessage{(Inkscape) Color is used for the text in Inkscape, but the package 'color.sty' is not loaded}%
    \renewcommand\color[2][]{}%
  }%
  \providecommand\transparent[1]{%
    \errmessage{(Inkscape) Transparency is used (non-zero) for the text in Inkscape, but the package 'transparent.sty' is not loaded}%
    \renewcommand\transparent[1]{}%
  }%
  \providecommand\rotatebox[2]{#2}%
  \ifx\svgwidth\undefined%
    \setlength{\unitlength}{443.94840393bp}%
    \ifx\svgscale\undefined%
      \relax%
    \else%
      \setlength{\unitlength}{\unitlength * \real{\svgscale}}%
    \fi%
  \else%
    \setlength{\unitlength}{\svgwidth}%
  \fi%
  \global\let\svgwidth\undefined%
  \global\let\svgscale\undefined%
  \makeatother%
  \begin{picture}(1,0.44859879)%
    \put(0,0){\includegraphics[width=\unitlength,page=1]{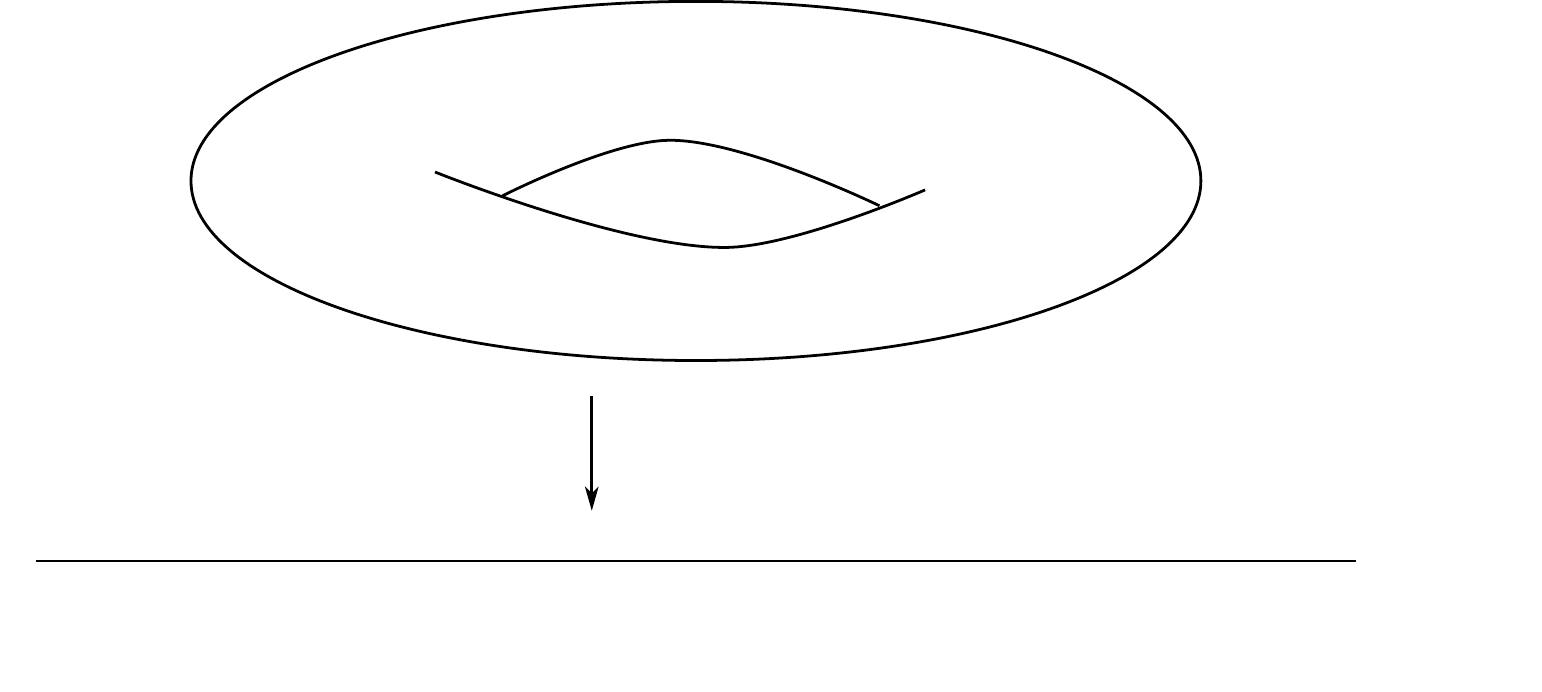}}%
    \put(0.42895184,0.16066306){\color[rgb]{0,0,0}\makebox(0,0)[lb]{\smash{$f$}}}%
    \put(-0.00224372,0.40650894){\color[rgb]{0,0,0}\makebox(0,0)[lb]{\smash{$T^2$}}}%
    \put(0.90004909,0.03580947){\color[rgb]{0,0,0}\makebox(0,0)[lb]{\smash{$\R^1$}}}%
    \put(0,0){\includegraphics[width=\unitlength,page=2]{torus.pdf}}%
    \put(0.42380323,0.0422452){\color[rgb]{0,0,0}\makebox(0,0)[lb]{\smash{$y$}}}%
    \put(0,0){\includegraphics[width=\unitlength,page=3]{torus.pdf}}%
    \put(0.58984568,0.15036587){\color[rgb]{0,0,0}\makebox(0,0)[lb]{\smash{$\Phi_f$}}}%
    \put(0,0){\includegraphics[width=\unitlength,page=4]{torus.pdf}}%
  \end{picture}%
\endgroup%

\end{center}

It is an important observation -- especially when proving waist inequalities -- that every map $f\colon M^n\to\R^q$ yields a homotopically nontrivial map $\Phi_f\colon\R^q\to Z_{n-q}(M)$. There are different ways to formalise the notion of spaces of cycles. For obvious reasons we chose a definition with the flavour of algebraic topology.
\item Of course one wonders what is the homotopy type of $cl^{n-q}D_*$ for a given chain complex $D_*$. Up to an index shift $cl^{n-q}$ is just the Dold-Kan correspondence between chain complexes and simplicial abelian groups and from that we get in analogy to (\ref{analogue}) \[cl^{n-q}D_*\simeq\prod_{i=0}^{\infty}K(H_{n-q+i}(D_*),i)\text{.}\]
\item In the construction above the cycle $Z(f,\mathcal{T})$ in $cl^{n-q}C_*(M;R)$ is called the \emph{canonical cycle associated to $f$ and $\mathcal{T}$} and $[Z(f;\mathcal{T})]\in H_qcl^{n-q}C_*(M;R)$ the \emph{canonical homology class}. The cycle $Z(f,\mathcal{T})$ depends heavily on the map $f$ and the triangulation $\mathcal{T}$ whereas one can show that $[Z(f;\mathcal{T})]$ is independent of these choices. We could define the canonical homology class far easier as being represented by the $q$-simplex given by the diagram
\[\begin{xy}
  \xymatrix{
	0\ar[r]\ar[d] & C_q\Delta[q]\ar[r]\ar[d]_{\sigma_q} & \ldots\ar[r] & C_0\Delta[q]\ar[r]\ar[d]^{\sigma_0} & 0\ar[d]\\
	C_{n+1}M\ar[r] & C_{n}M\ar[r] & \ldots\ar[r] & C_{n-q}M\ar[r] & D_{(n-q)-1}\text{.}
	}\end{xy}\]
where $\sigma_q$ maps $c_q$ to a fundamental cycle of $M$ and all other $c_i$ vanish. This cycle arises from the geometric construction above if there exists one large $q$-simplex containing the $\im f$.

However this cycle does not incorporate the map $f$ and the fine triangulation $\mathcal{T}$ in such a way which will enable us to execute the proof of Proposition \ref{essential} which we restate for convenience.
\end{enumerate}
\end{com}
\begingroup
\def\thesa{\ref{essential}}
\begin{pr}
Let $f\colon T^n\to N^q$ be a smooth map where $N$ is a closed manifold together with a smooth triangulation $\mathcal{T}$ the simplices of which intersect $f$ stratum transversally. Then there exists a simplex $\sigma\in\mathcal{T}_k$ such that the inclusion $f_\sigma\colon F_\sigma\defeq f^{-1}\sigma(\Delta^k)\hookrightarrow T^n$ satisfies
\begin{align}\label{interpret}
\rk H^1(f_{\sigma};\Z)\geq n-q\text{.}
\end{align}
\end{pr}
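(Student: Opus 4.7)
The plan is to prove Proposition \ref{essential} by contradiction, using the cycle space $cl^{n-q}C_*(T^n;\Z)$ together with the Filling Lemma to construct an explicit nullhomology of the canonical cycle $Z(f,\mathcal{T})$, thereby contradicting $[Z(f,\mathcal{T})]\neq 0$ established in Construction \ref{canonical}.

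So suppose, for a contradiction, that $\rk H^1(f_\sigma;\Z)<n-q$ for every simplex $\sigma\in\mathcal{T}$. By the Filling Lemma \ref{fill} applied to each inclusion $f_\sigma\colon F_\sigma\hookrightarrow T^n$, we obtain a relative CW complex $(\Fill(f_\sigma),F_\sigma)$ with an extension $\fil(f_\sigma)\colon\Fill(f_\sigma)\to T^n$, such that $\Fill(f_\sigma)$ is homotopy equivalent to a disjoint union of tori each of dimension strictly less than $n-q$. In particular $H_{\geq n-q}(\Fill(f_\sigma);\Z)=0$. Pushing forward chains via $\fil(f_\sigma)$ we will regard all chains from now on as living in $C_*(T^n;\Z)$.

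The main step is an inductive construction: for every simplex $\sigma\in\mathcal{T}_k$ we will produce an element $\widetilde{z}_\sigma\in\bigl(cl^{n-q}C_*(T^n;\Z)\bigr)_{k+1}$ whose faces satisfy $\partial_0\widetilde{z}_\sigma=z_\sigma$ and $\partial_i\widetilde{z}_\sigma=\widetilde{z}_{\partial_{i-1}\sigma}$ for $1\leq i\leq k+1$ (with the usual simplicial identity checks). For the base case $k=0$, the $0$-simplex $z_v$ has top chain $c_v$ representing the fundamental class of $F_v$, which vanishes in $H_{n-q}(\Fill(f_v);\Z)=0$; hence there exists a chain $b_v\in C_{n-q+1}(\Fill(f_v);\Z)$ with $\partial b_v=c_v$, and applying the Gluing Lemma \ref{extension} with $\overline\sigma=b_v$ yields the desired $1$-simplex $\widetilde{z}_v$. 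For the inductive step on $\sigma\in\mathcal{T}_k$, the previously constructed $z_\sigma$ and $\widetilde{z}_{\partial_i\sigma}$ assemble into a compatible boundary datum whose sum of top chains is a cycle in $C_{n-q+k}(\Fill(f_\sigma);\Z)$; since $H_{n-q+k}(\Fill(f_\sigma);\Z)=0$, this cycle bounds some $\overline{\sigma}\in C_{n-q+k+1}(\Fill(f_\sigma);\Z)$, and the Gluing Lemma produces $\widetilde{z}_\sigma$.

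Finally, consider $W\defeq\sum_{\sigma\in\mathcal{T}_q}(-1)^?\widetilde{z}_\sigma\in C_{q+1}\,cl^{n-q}C_*(T^n;\Z)$, with signs chosen so that the internal $\widetilde{z}_\tau$-contributions for $(q-1)$-simplices $\tau$ cancel (this happens because each such $\tau$ is a face of exactly two top simplices with opposite induced orientations, exactly as in item (iii) of Proposition \ref{cyclesmot}). Its boundary then equals $\sum_{\sigma\in\mathcal{T}_q}z_\sigma=Z(f,\mathcal{T})$, so $[Z(f,\mathcal{T})]=0$ in $H_q\,cl^{n-q}C_*(T^n;\Z)$, contradicting Construction \ref{canonical}. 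Hence some simplex $\sigma$ must violate the assumption, proving the proposition.

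The main obstacle I expect is bookkeeping: verifying that the face/degeneracy compatibilities required by the Gluing Lemma are really satisfied at each inductive step and that the signs in the final sum $W$ produce the cancellation of interior terms. This is essentially the same combinatorics used in Proposition \ref{cyclesmot}, so it should go through, but one has to write it out carefully. A secondary minor issue is that the Filling Lemma only produces $\Fill(f_\sigma)$ up to homotopy equivalence to a wedge of tori; since we only need the vanishing of $H_{\geq n-q}(\Fill(f_\sigma);\Z)$ (a homotopy invariant) to apply the inductive fillings, this causes no difficulty.
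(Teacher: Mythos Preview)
Your overall strategy is right, but the inductive step has a real gap. You apply the Filling Lemma once to each $f_\sigma$ and then claim that the boundary cycle $\sum_i(-1)^i\widehat{\widetilde z_{\partial_i\sigma}}+(-1)^{k+1}\widehat{z_\sigma}$ lives in $C_{n-q+k}(\Fill(f_\sigma);\Z)$. It does not: the chains $\widehat{\widetilde z_{\partial_i\sigma}}$ were produced inside $\Fill(f_{\partial_i\sigma})$, and there is no inclusion $\Fill(f_{\partial_i\sigma})\hookrightarrow\Fill(f_\sigma)$ --- the Filling Lemma only gives $(\Fill(f_\sigma),F_\sigma)$ as a relative CW complex, so nothing coming from the filling of a face sits inside it. Pushing everything to $T^n$ does not help either, since $H_{n-q+k}(T^n)$ is not zero and there is no reason for your cycle to bound there. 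So at the inductive step you have a cycle but no space with vanishing $(n-q+k)$-th homology that contains it.

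The paper fixes exactly this by an extra layer of bookkeeping: for each $\sigma\in\mathcal{T}_k$ it first builds an intermediate space $K_\sigma$ by attaching to $F_\sigma$ the cells of the already-constructed fillings $L_\tau$ of all proper faces $\tau$; then it checks that $(K_\sigma,F_\sigma)$ is homologically $1$-connected, so that $\rk H^1(k_\sigma;\Z)=\rk H^1(f_\sigma;\Z)<n-q$, and only then applies the Filling Lemma to $k_\sigma$ to get $L_\sigma$. By construction $L_{\partial_i\sigma}\subset K_\sigma\subset L_\sigma$, so the boundary cycle genuinely lives in $L_\sigma$, where $H_{\geq n-q}$ vanishes. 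This is the missing idea in your argument. A second, smaller issue: the proposition does not assume $N$ orientable, so Construction~\ref{canonical} (and hence $Z(f,\mathcal{T})$) must be run with $\Z_2$ coefficients rather than $\Z$; the paper keeps $\Z$ only where the Filling Lemma needs it.
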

\addtocounter{sa}{-1}
\endgroup
In the following proof there will be a certain unpleasant mixture of coefficients between $\Z$ and $\Z_2$. After all this could not have been totally avoided since we do not want to assume the target manifold $N$ to be orientable which introduces $\Z_2$ coefficients at some places. On the other hand, as explained in Remark \ref{expfill} (iii), the usage of Filling Lemma \ref{fill} forces us to interpret some expressions, e.g. (\ref{interpret}), with coefficients in $\Z$.
\begin{proof}
We proceed by contradiction and assume that such a map $f$ and triangulation $\mathcal{T}$ exist. Recall the simplices $z_{\sigma}\in\left(cl^{n-q}C_*(F_{\sigma};\Z_2)\right)_k$ and the canonical cycle \[Z(f;\mathcal{T})\defeq \sum_{\sigma\in\mathcal{T}_q}z_{\sigma}\in C_qcl^{n-q}C_*(T^n;\Z_2)\] from Construction \ref{canonical}.

We will build the cone of $Z$ inside $cl^{n-q}C_*(T^n;\Z_2)$. For every $\sigma\in\mathcal{T}_k$ we will construct simplices $w_{\sigma}\in\left(cl^{n-q}C_*(T^n;\Z_2)\right)_{k+1}$ satisfying
\begin{align}
\label{zwei}\del_iw_{\sigma}=\begin{cases}w_{\del_i\sigma}&\text{if }0\leq i\leq k\\z_{\sigma}&\text{if }i={k+1}\text{.}\end{cases}
\end{align}
For $\sigma\in\mathcal{T}_0$ and $i=0$ equation (\ref{zwei}) shall be interpreted as $\del_0w_{\sigma}=w_{\del_0\sigma}=0$.

\begin{center}
\def\svgwidth{0.8\textwidth}
\begingroup%
  \makeatletter%
  \providecommand\color[2][]{%
    \errmessage{(Inkscape) Color is used for the text in Inkscape, but the package 'color.sty' is not loaded}%
    \renewcommand\color[2][]{}%
  }%
  \providecommand\transparent[1]{%
    \errmessage{(Inkscape) Transparency is used (non-zero) for the text in Inkscape, but the package 'transparent.sty' is not loaded}%
    \renewcommand\transparent[1]{}%
  }%
  \providecommand\rotatebox[2]{#2}%
  \ifx\svgwidth\undefined%
    \setlength{\unitlength}{342.29726563bp}%
    \ifx\svgscale\undefined%
      \relax%
    \else%
      \setlength{\unitlength}{\unitlength * \real{\svgscale}}%
    \fi%
  \else%
    \setlength{\unitlength}{\svgwidth}%
  \fi%
  \global\let\svgwidth\undefined%
  \global\let\svgscale\undefined%
  \makeatother%
  \begin{picture}(1,0.58725319)%
    \put(0,0){\includegraphics[width=\unitlength,page=1]{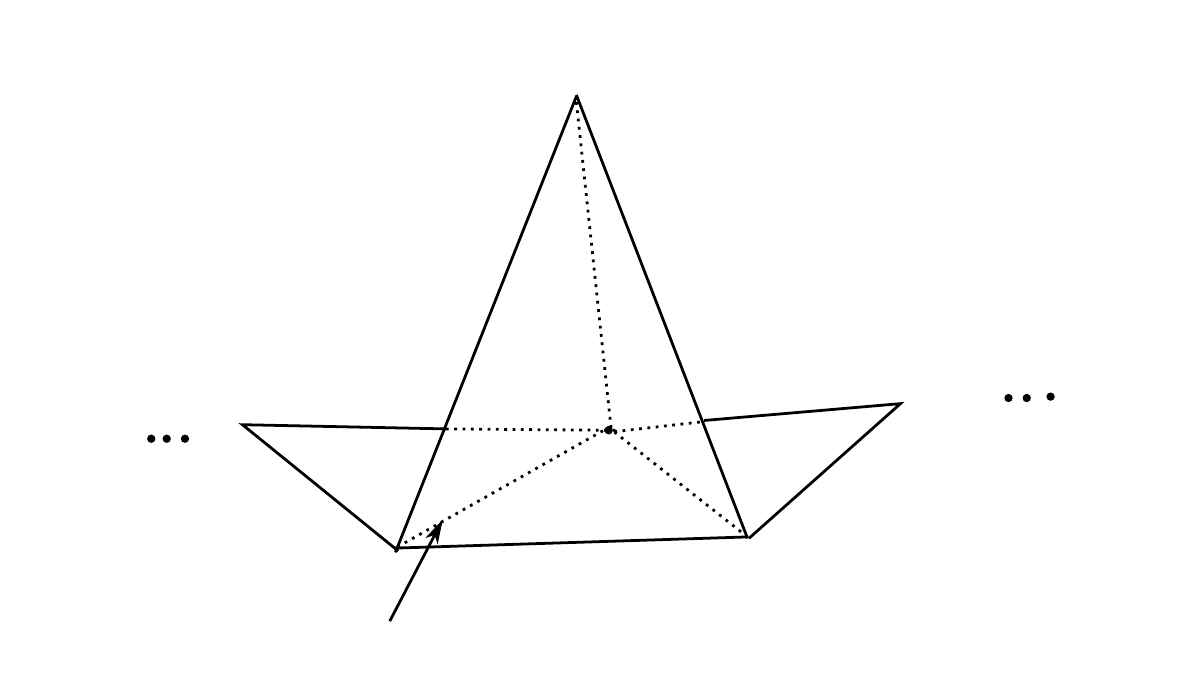}}%
    \put(0.2936649,0.0082679){\color[rgb]{0,0,0}\makebox(0,0)[lb]{\smash{$z_{\del_i\sigma}$}}}%
    \put(0,0){\includegraphics[width=\unitlength,page=2]{kegel.pdf}}%
    \put(0.60708608,0.04527643){\color[rgb]{0,0,0}\makebox(0,0)[lb]{\smash{$z_{\sigma}$}}}%
    \put(0.48321714,0.52906638){\color[rgb]{0,0,0}\makebox(0,0)[lb]{\smash{$0$}}}%
    \put(0,0){\includegraphics[width=\unitlength,page=3]{kegel.pdf}}%
    \put(0.26118791,0.28366564){\color[rgb]{0,0,0}\makebox(0,0)[lb]{\smash{$w_{\del_i\sigma}$}}}%
    \put(0,0){\includegraphics[width=\unitlength,page=4]{kegel.pdf}}%
    \put(0.74965222,0.36897161){\color[rgb]{0,0,0}\makebox(0,0)[lb]{\smash{$w_{\sigma}$}}}%
    \put(-0.00291003,0.5606179){\color[rgb]{0,0,0}\makebox(0,0)[lb]{\smash{$cl^{n-q}C_*(T^n;\Z_2)$}}}%
  \end{picture}%
\endgroup%

\end{center}

If we constructed such simplices $w_{\sigma}$ the standard cone calculation shows
\[\del\sum_{\sigma\in\mathcal{T}_q}w_{\sigma}=(-1)^{q+1}Z(f;\mathcal{T})\]
contradicting Construction \ref{canonical} where we have seen that $[Z(f;\mathcal{T})]\neq 0$ in $H_qcl^{n-q}C_*(T^n;\Z_2)$. So we are only left with constructing simplices $w_{\sigma}$ satisfying equation (\ref{zwei}).

Recall Notation \ref{grossklein} that every map from a topological space to $T^n$ is denoted by the lower case letter corresponding to the upper case letter representing the space. For every $0\leq k\leq q$ and $\sigma\in\mathcal{T}_k$ we will inductively construct triples $(L_{\sigma},K_{\sigma},F_{\sigma})$ of topological spaces and simplices $w_{\sigma}\in\left(cl^{n-q}C_*(L_{\sigma};\Z_2)\right)_{k+1}$ such that the following properties hold.
\begin{enumerate}[(i)]
\item $(L_{\sigma},F_{\sigma})$ is a homologically $1$-connected relative CW complex and we write
\begin{align}\label{cells}
L_{\sigma}=F_{\sigma}\union e_{\sigma}
\end{align}
where $e_{\sigma}$ is an abbreviation for all the cells which we need to attach to $F_{\sigma}$ in order to obtain $L_{\sigma}$.
\item There are canonical inclusions as in the following diagram.
\[\begin{xy}
  \xymatrix{
	L_{\del_i\sigma}\ar@{^{(}->}[r]\ar@{^{(}->}[rd] & L_{\sigma}\\
	K_{\del_i\sigma}\ar@{^{(}->}[r]\ar@{^{(}->}[u] & K_{\sigma}\ar@{^{(}->}[u]\\
	F_{\del_i\sigma}\ar@{^{(}->}[r]\ar@{^{(}->}[u] & F_{\sigma}\ar@{^{(}->}[u]
  }\end{xy}\]
\item There exist extensions such that the diagram
\[\begin{xy}\label{exte}
  \xymatrix{
  L_{\sigma}\ar[rd]^{l_{\sigma}} & \\
  K_{\sigma}\ar[r]^{k_{\sigma}}\ar@{^{(}->}[u] & T^n\\
	F_{\sigma}\ar[ru]_{f_{\sigma}}\ar@{^{(}->}[u] &
  }\end{xy}\] commutes.
\item $\rk H^1(l_{\sigma};\Z)=\rk H^1(k_{\sigma};\Z)=\rk H^1(f_{\sigma};\Z)<n-q$
\item We have $H_{\geq n-q}(L_\sigma;\Z_2)=0$ and in particular $H_*(K_{\sigma};\Z_2)\to H_*(L_{\sigma};\Z_2)$ vanishes for $*\geq n-q$.
\item The simplices $w_{\sigma}$ satisfy (\ref{zwei}) as a relation of simplices $cl^{n-q}C_*(L_{\sigma};\Z_2)$. Naturally it can also be seen as a relation in $cl^{n-q}C_*(T^n;\Z_2)$.
\end{enumerate}

In the base case $k=0$ we can set $K_{\sigma}\defeq F_{\sigma}$. By assumption we have $\rk H^1(f_{\sigma};\Z)<n-q$ and we can apply Filling Lemma \ref{fill} to it. We get a relative CW complex $(L_{\sigma},F_{\sigma})$ and an extension
\begin{align}
\begin{xy}
  \xymatrix{
  L_{\sigma}\ar[rd]^{l_{\sigma}} & \\
	K_{\sigma}\ar@{^{(}->}[u]\ar[r] &  T^n\\
  F_{\sigma}\ar@{=}[u]^{\id}\ar[ru]_{f_{\sigma}} &
  }\end{xy}
\end{align}
satisfying (iv). Consider the cycle $\widehat{z_{\sigma}}\in C_{n-q}(F_{\sigma};\Z)$ and its image under the inclusion $F_{\sigma}=K_{\sigma}\hookrightarrow L_{\sigma}$. Since $H_{n-q}(L_{\sigma};\Z_2)=0$ there exists a (suggestively denoted) chain $\widehat{w_{\sigma}}\in C_{n-q+1}(L_{\sigma};\Z_2)$ such that
\begin{align}
\del\widehat{w_{\sigma}}=\widehat{z_{\sigma}}\text{.}
\end{align}
Using the Gluing Lemma \ref{extension} we get a simplex $w_{\sigma}\in\left(cl^{n-q}C_*(L_{\sigma};\Z_2)\right)_1$ satisfying (\ref{zwei}) for $k=0$.

Assume $K_{\tau}$, $L_{\tau}$ and $w_{\tau}$ have already been constructed for all simplices $\tau$ of dimension strictly less than $k\geq 1$.

\begin{center}
\def\svgwidth{0.8\textwidth}
\begingroup%
  \makeatletter%
  \providecommand\color[2][]{%
    \errmessage{(Inkscape) Color is used for the text in Inkscape, but the package 'color.sty' is not loaded}%
    \renewcommand\color[2][]{}%
  }%
  \providecommand\transparent[1]{%
    \errmessage{(Inkscape) Transparency is used (non-zero) for the text in Inkscape, but the package 'transparent.sty' is not loaded}%
    \renewcommand\transparent[1]{}%
  }%
  \providecommand\rotatebox[2]{#2}%
  \ifx\svgwidth\undefined%
    \setlength{\unitlength}{384.15805664bp}%
    \ifx\svgscale\undefined%
      \relax%
    \else%
      \setlength{\unitlength}{\unitlength * \real{\svgscale}}%
    \fi%
  \else%
    \setlength{\unitlength}{\svgwidth}%
  \fi%
  \global\let\svgwidth\undefined%
  \global\let\svgscale\undefined%
  \makeatother%
  \begin{picture}(1,0.60263129)%
    \put(0,0){\includegraphics[width=\unitlength,page=1]{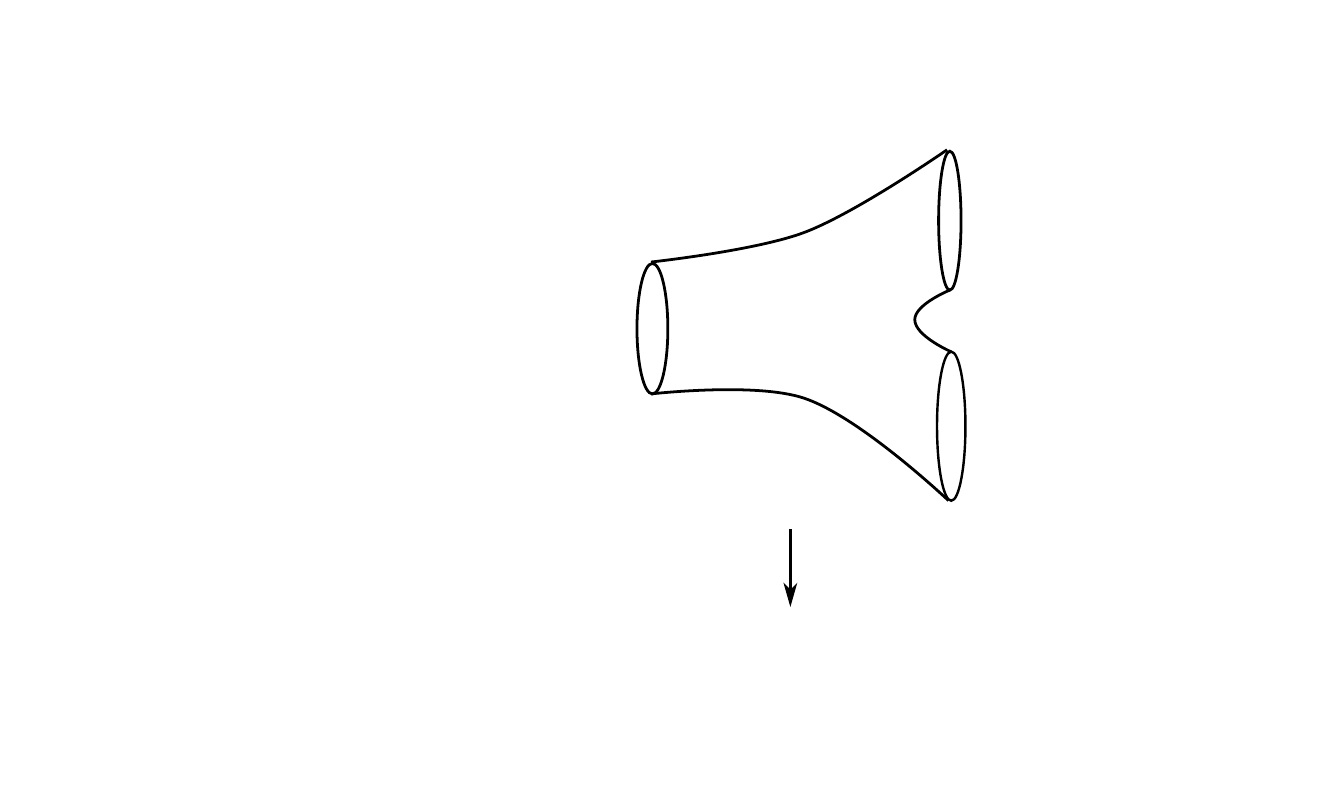}}%
    \put(0.61988932,0.16625391){\color[rgb]{0,0,0}\makebox(0,0)[lb]{\smash{$f$}}}%
    \put(0,0){\includegraphics[width=\unitlength,page=2]{glue.pdf}}%
    \put(0.53433188,0.00649757){\color[rgb]{0,0,0}\makebox(0,0)[lb]{\smash{$\sigma=[v,w]$}}}%
    \put(0.40676391,0.06152151){\color[rgb]{0,0,0}\makebox(0,0)[lb]{\smash{$v$}}}%
    \put(0.74153826,0.05803606){\color[rgb]{0,0,0}\makebox(0,0)[lb]{\smash{$w$}}}%
    \put(0,0){\includegraphics[width=\unitlength,page=3]{glue.pdf}}%
    \put(-0.00208252,0.42265738){\color[rgb]{0,0,0}\makebox(0,0)[lb]{\smash{$n-q=1$, $k=1$}}}%
    \put(0,0){\includegraphics[width=\unitlength,page=4]{glue.pdf}}%
    \put(0.29315879,0.25494115){\color[rgb]{0,0,0}\makebox(0,0)[lb]{\smash{$L_v$}}}%
    \put(0.90425999,0.35381123){\color[rgb]{0,0,0}\makebox(0,0)[lb]{\smash{$L_w$}}}%
    \put(0,0){\includegraphics[width=\unitlength,page=5]{glue.pdf}}%
    \put(0.58871718,0.57889838){\color[rgb]{0,0,0}\makebox(0,0)[lb]{\smash{$F_{\sigma}$}}}%
    \put(-0.00259293,0.49643975){\color[rgb]{0,0,0}\makebox(0,0)[lb]{\smash{$K_{\sigma}=K_{\sigma}^{(0)}$}}}%
  \end{picture}%
\endgroup%

\end{center}

For $\sigma\in\mathcal{T}_k$ and $0\leq i<k$ we inductively define spaces and maps $k_{\sigma}^{(i)}\colon K_{\sigma}^{(i)}\to T^n$ by setting $K_{\sigma}^{(-1)}\defeq F_{\sigma}$, $k_{\sigma}^{(-1)}\defeq f_{\sigma}$ and
\[\begin{xy}
  \xymatrix{
	**[l] K_{\sigma}^{(i-1)}\union\bigunion\limits_{i\text{-dim. faces }\tau\text{ of }\sigma}e_{\tau}=:K_{\sigma}^{(i)}\ar[rd]^{k_{\sigma}^{(i)}} & \\
	K_{\sigma}^{(i-1)}\ar[r]_{k_{\sigma}^{(i-1)}}\ar@{^{(}->}[u] & T^n
  }\end{xy}\]
where we used the notation introduced in equation (\ref{cells}). This is well-defined since the targets of the attaching maps of $e_{\tau}$ are $F_{\tau}$ which canonically are subspaces of $F_{\sigma}\subseteq K_{\sigma}^{(i-1)}$ for every $i$.

We have a homeomorphism
\[\bigvee_{i\text{-dim. faces }\tau\text{ of }\sigma}\left(\bigslant{L_{\del_i\sigma}}{F_{\del_i\sigma}}\right)\stackrel{\cong}{\longrightarrow}\bigslant{K_{\sigma}^{(i)}}{K_{\sigma}^{(i-1)}}\text{.}\]
Since the pairs $(L_{\del_i\sigma},F_{\del_i\sigma})$ are homologically $1$-connected we conclude
\[H_*\left(K_{\sigma}^{(i)},K_{\sigma}^{(i-1)}\right)\cong\bigoplus_{i\text{-dim. faces }\tau\text{ of }\sigma}H_*\left(L_{\del_i\sigma},F_{\del_i\sigma}\right)=0\] for $*=0,1$ proving that the $\left(K_{\sigma}^{(i)},K_{\sigma}^{(i-1)}\right)$ are homologically $1$-connected.

Let $K_{\sigma}\defeq K_{\sigma}^{(k-1)}$, $k_{\sigma}\defeq k_{\sigma}^{(k-1)}$. Since all the $\left(K_{\sigma}^{(i)},K_{\sigma}^{(i-1)}\right)$ are homologically $1$-connected the same holds for $(K_{\sigma},F_{\sigma})$. In particular the inclusion $F_{\sigma}\hookrightarrow K_{\sigma}$ induces a surjective homomorphism $H_1(F_{\sigma};\Z)\to H_1(K_{\sigma};\Z)$. This surjectivity, Remark \ref{discrepancy} (i) and the diagram
\[\begin{xy}
  \xymatrix{		
	K_{\sigma}\ar[r]^{k_{\sigma}} & T^n\\
	F_{\sigma}\ar[ru]_{f_{\sigma}}\ar@{^{(}->}[u] & 
  }\end{xy}\]
show that $\rk H^1(k_{\sigma};\Z)=\rk H_1(k_{\sigma};\Z)=\rk H_1(f_{\sigma};\Z)=\rk H^1(f_{\sigma};\Z)$.

In particular we have $\rk H^1(k_{\sigma};\Z)<n-q$ and we can apply Filling Lemma \ref{fill} to it in order to obtain the space $L_{\sigma}\defeq \Fill(k_{\sigma})$ and the map $l_{\sigma}\defeq \fil(\sigma)$ satisfying (\ref{exte}). The pair $(L_{\sigma},K_{\sigma})$ is homologically $1$-connected and with the same calculation as above we get $\rk H^1(l_{\sigma};\Z)=\rk H^1(k_{\sigma};\Z)$.

Using the inclusions $L_{\del_i\sigma}\subseteq K_{\sigma}$ and $F_{\sigma}\subseteq L_{\sigma}$ we can consider the chain
\begin{align}\label{summe}
y_{\sigma}\defeq \sum_{i=0}^k(-1)^i\widehat{w_{\del_i\sigma}}+(-1)^{k+1}\widehat{z_{\sigma}}\in C_{n-q+k}(K_{\sigma};\Z_2)\text{.}
\end{align}
Since $\del y_{\sigma}=0$ and $H_{n-q+k}(L_{\sigma};\Z_2)=0$ there exists a (suggestively denoted) chain $\widehat{w_{\sigma}}\in C_{n-q+k+1}(L_{\sigma};\Z_2)$ satisfying $\del\widehat{w_{\sigma}}=y_{\sigma}$. Using the Gluing Lemma \ref{extension} we get a simplex $w_{\sigma}\in\left(cl^{n-q}C_*(L_{\sigma};\Z_2)\right)_{k+1}$ satisfying (\ref{zwei}).
\end{proof}
The proof above exhibits a close relationship between $1$-dimensional quantities and fundamental classes and calls to mind the statement and proof of the systolic inequality.

There is a natural generalisation of Theorem \ref{estimate} to essential source manifolds $M$. We will recall this notion.

\begin{de}[Essentialness, cf. \cite{Gromov1983}]Let $G$ be an abelian coefficient group and $M^n$ be a closed connected $G$-oriented manifold with fundamental group $\pi_1(M)=:\pi$ and fundamental class $[M]_G\in H_n(M;G)$. Let $\Phi\colon M\to B\pi$ denote the classifying map of the universal cover $\widetilde{M}\to M$. The manifold $M$ is said to be \emph{$G$-essential} if the image
\begin{align*}
\Phi_*\colon H_n(M;G)&\to H_n(B\pi;G)=H_n(\pi;G)\\
[M]_G&\mapsto \Phi_*[M]_G\neq 0
\end{align*}
does not vanish.
\end{de}
\begin{sa}\label{essential2}
Let $M^m$ be a manifold with fundamental group $\Z^n$ and assume that at least one of the following properties holds:
\begin{enumerate}[(i)]
\item $M$ is $\Z_2$-essential
\item $M$ and $N$ are orientable and $M$ is $\Z$-essential
\end{enumerate}
Then every continuous map $f\colon M\to N$ admits a point $y\in N$ such that the rank of the restriction homomorphism satisfies \[\rk\left[H^1(M;\Z)\to H^1(f^{-1}y;\Z)\right]\geq m-q\text{.}\]
\end{sa}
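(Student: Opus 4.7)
The plan is to imitate the proof of Theorem \ref{estimate} almost verbatim, with the classifying map $\Phi\colon M\to T^n=B\Z^n$ of the universal cover playing the role formerly played by the identity of $T^n$, and the fundamental class $[M]_R\in H_m(M;R)$ playing the role of $[T^n]_R$. I take $R=\Z_2$ in case (i) and $R=\Z$ in case (ii); in either case $M$ and $N$ are $R$-oriented and $\Phi_*[M]_R\neq 0\in H_m(T^n;R)$ by essentialness. As in Remark \ref{sharp} (ii) one can reduce to closed connected $N$, and the reduction used for Theorem \ref{estimate} (via Proposition \ref{upper} and Proposition \ref{genericfine}) further reduces the claim to the analogue of Proposition \ref{essential}: for every smooth $f\colon M\to N$ intersecting a fine smooth triangulation $\mathcal{T}$ of $N$ stratum transversally, some $\sigma\in\mathcal{T}_k$ satisfies $\rk H^1(\iota_\sigma;\Z)\geq m-q$, where $\iota_\sigma\colon F_\sigma\hookrightarrow M$ denotes the inclusion.

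The first ingredient is the non-vanishing of the canonical homology class after push-forward. Construction \ref{canonical}, applied to $f\colon M^m\to N^q$ with coefficients in $R$, produces a canonical cycle $Z(f,\mathcal{T})\in C_q cl^{m-q}C_*(M;R)$ whose image under $\ev_\bullet$ represents $[M]_R$. By functoriality of $cl^{m-q}$ and commutation of $\ev_\bullet$ with push-forwards, $\Phi_*Z(f,\mathcal{T})\in C_q cl^{m-q}C_*(T^n;R)$ has $\ev_\bullet$-image representing $\Phi_*[M]_R\neq 0$, so its homology class in $H_q cl^{m-q}C_*(T^n;R)$ is nonzero.

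The second ingredient is to replay the inductive cone construction from the proof of Proposition \ref{essential} inside $cl^{m-q}C_*(T^n;R)$, feeding the Filling Lemma with the compositions $\Phi\circ\iota_\sigma\colon F_\sigma\to T^n$ instead of with the inclusions $F_\sigma\hookrightarrow T^n$. Under the contradiction hypothesis $\rk H^1(\iota_\sigma;\Z)<m-q$ for every $\sigma$, one automatically has $\rk H^1(\Phi\iota_\sigma;\Z)\leq\rk H^1(\iota_\sigma;\Z)<m-q$. Reinspection of the proof of Filling Lemma \ref{fill} shows that the threshold $n-q$ there is immaterial: the actual content is that any $k\colon K\to T^n$ with $\rk H^1(k;\Z)<s$ admits a filling $\fil(k)\colon\Fill(k)\to T^n$ whose components are tori of dimension strictly less than $s$, and hence $H_{\geq s}(\Fill(k);G)=0$ for every abelian group $G$. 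Applying this with $s=m-q$ yields, at each inductive step, filling spaces $L_\sigma$ with $H_{\geq m-q}(L_\sigma;R)=0$, exactly what is needed to bound the sums analogous to (\ref{summe}). The resulting simplices $w_\sigma\in(cl^{m-q}C_*(L_\sigma;R))_{k+1}$, pushed along $l_\sigma\colon L_\sigma\to T^n$, satisfy the cone relations (\ref{zwei}) in $cl^{m-q}C_*(T^n;R)$, and their sum over $\sigma\in\mathcal{T}_q$ null-homologs $\Phi_*Z(f,\mathcal{T})$, contradicting the nontriviality established above. The only point requiring care is the coefficient interplay in case (i), where the canonical cycle and the cone live over $\Z_2$ yet the Filling Lemma still requires $\rk H^1(-;\Z)<m-q$ because of its reliance on Hurewicz and covering space theory (cf.~Remark \ref{expfill}); this is harmless since the contradiction hypothesis is already phrased with $\Z$ coefficients and the Filling Lemma's homological vanishing holds with arbitrary coefficients.
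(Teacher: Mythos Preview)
Your proposal is correct and follows essentially the same route as the paper's proof: push the canonical cycle forward along the classifying map $\Phi\colon M\to T^n$, use essentialness to ensure the image is nonzero in $H_qcl^{m-q}C_*(T^n;R)$, and then rerun the cone construction by feeding the Filling Lemma with the compositions $\Phi\circ\iota_\sigma$. Your explicit observation that the threshold in Lemma~\ref{fill} may be taken to be any $s$ (here $s=m-q$) is exactly the point one needs, and the paper uses the same mechanism (noting that $\Phi$ induces an isomorphism on $H^1(-;\Z)$, so in fact $\rk H^1(\Phi\circ\iota_\sigma;\Z)=\rk H^1(\iota_\sigma;\Z)$, slightly sharper than your inequality but not needed).
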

\begin{rem}
\begin{enumerate}[(i)]
\item With the assumptions of the theorem above we automatically have $m\leq n$ since $H_{>n}(B\Z^n;G)=H_{>n}(T^n;G)=0$. Examples of $G$-essential $n$-manifolds with fundamental group $\Z^n$ ($m=n$) that are not necessarily tori are connected sums of $T^n$ with any simply connected manifold in dimensions $n\geq 3$. If $4\leq m<n$ we can start with a map $\varphi\colon T^m\to T^n$ such that $H_m(\varphi;G)[T^m]\neq 0$ and use surgery to turn this into an essential $m$-manifold with fundamental group $\Z^n$. More generally, manifolds satisfying largeness conditions, such as enlargeability, are $\Z$-essential, compare \cite{Brunnbauer2010}, Theorem 3.6. in connection with Corollary 3.5.
\item For orientable manifolds $M^m$ with fundamental group $\Z^n$ and classifying map $\Phi\colon M\to T^n$ we have the following commutative diagram.
\[\begin{xy}
  \xymatrix{
	H_m(M;\Z)\ar[r]^{H_m(\Phi;\Z)}\ar[d] & H_m(T^n;\Z)\ar[d]\ar[r]^{\cong} & \Z^{n\choose m}\ar[d]\\
	H_m(M;\Z_2)\ar[r]_{H_m(\Phi;\Z_2)} & H_m(T^n;\Z_2)\ar[r]_{\cong} & \Z_2^{n\choose m}
  }\end{xy}\]
	The vertical arrows are change-of-coefficient homomorphisms and the leftmost vertical arrow maps $[M]_{\Z}$ to $[M]_{\Z_2}$. This diagram shows that for manifolds with free abelian fundamental group $\Z_2$-essentialness implies $\Z$-essentialness. This explains the somehow inorganic essentialness assumption in the theorem above.
\end{enumerate}
\end{rem}
\begin{proof}[Proof of Theorem \ref{essential2}]
We only discuss case (i) and indicate the necessary adaptions to the existing proof. Like we reduced Theorem \ref{estimate} to Proposition \ref{essential} we proceed by contradiction and assume that $N$ is connected and closed, there exists a smooth $f\colon M\to N$ and a triangulation $\mathcal{T}$ of $N$ such that the following two properties hold:
\begin{enumerate}[(i)]
\item The smooth simplices of $\mathcal{T}$ intersect $f$ stratum transversally.
\item For every $\sigma\in\mathcal{T}_k$ the inclusion $f_{\sigma}\colon F_{\sigma}\defeq f^{-1}\sigma(\Delta^k)\hookrightarrow M$ satisfies \[\rk H^1(f_{\sigma};\Z)<m-q\text{.}\]
\end{enumerate}
Again for every $\sigma\in\mathcal{T}_k$ we consider the simplices $z_{\sigma}\in\left(cl^{m-q}C_*(F_\sigma;\Z_2)\right)_k$ and the canonical cycle $Z(f;\mathcal{T})\in C_qcl^{m-q}C_*(M;\Z_2)$ from Construction \ref{canonical}. Every $F_\sigma$ comes with a reference map to $M$ and naïvely we would think that we are in need of a replacement for Filling Lemma \ref{fill} where all the maps have target $M$ instead of $T^n$. Instead consider the classifying map $\Phi\colon M\to T^n$. The diagram
\[\begin{xy}
  \xymatrix{
	Z(f;\mathcal{T})\ar@{|->}[rrr]^{\Phi_*}\ar@{|->}[ddd]_{\ev_q} & & & \Phi_*Z(f;\mathcal{T})\ar@{|->}[ddd]_{\ev_q}\\
	 & C_\bullet cl^{m-q}C_*(M;\Z_2)\ar[r]\ar[d] & C_\bullet cl^{m-q}C_*(T^n;\Z_2)\ar[d] & \\
	 & C_{(m-q)+*}(M;\Z_2)\ar[r] & C_{(m-q)+*}(T^n;\Z_2) & \\
	\widehat{Z(f;\mathcal{T})}\ar@{|->}[rrr]_{\Phi_*} & & & \Phi_*\widehat{Z(f;\mathcal{T})}
  }\end{xy}\]
commutes, the bottom left cycle represents the fundamental class $[M]_{\Z_2}\in H_m(M;\Z_2)$ and since $M$ is $\Z_2$-essential the bottom right cycle defines a non-zero element in $H_m(T^n;\Z_2)$. Therefore the top right cycle defines a non-zero element in $H_qcl^{m-q}C_*(T^n;\Z_2)$.

The map $\Phi$ induces an isomorphism on $\pi_1$ as well as on $H_1$ by the Hurewicz theorem and $H^1$ by Remark \ref{discrepancy} (ii) (both with coefficients in $\Z$). This proves that every map $k\colon K\to T^n$ satisfying $\rk H^1(k;\Z)<n-q$ also satisfies \[\rk H^1(\Phi\circ k;\Z)=\rk H^1(k;\Z)<n-q\text{.}\] Hence we can proceed as earlier and deduce a contradiction by constructing a cone of $\Phi_*Z(f;\mathcal{T})$ in $cl^{m-q}C_*(T^n;\Z_2)$ via simplices $w_{\sigma}\in\left(cl^{n-q}C_*(T^n;\Z_2)\right)_{q+1}$ satisfying \[\del_iw_{\sigma}=\begin{cases}w_{\del_i\sigma}&\text{if }0\leq i\leq k\\z_{\sigma}&\text{if }i={k+1}\text{.}\end{cases}\] For $\sigma\in\mathcal{T}_0$ and $i=0$ the equation above shall be interpreted as $\del_0w_{\sigma}=w_{\del_0\sigma}=0$.
\end{proof}
\begin{qu}
\begin{enumerate}[(i)]
\item Theorem \ref{estimate}, the more general Theorem \ref{essential2} and the core input of both, Filling Lemma \ref{fill}, give the impression that we have not proven something about tori but about the geometry of the group $\Z^n$. Are there analogues for other groups $G$? Even in the case where $G$ is abelian with torsion, this is harder because $B\Z_p$ has cohomology classes in arbitrary high degrees and not every cohomology class in $H^*G$ is a product of degree $1$ classes.
\item Michał Marcinkowski has asked whether Theorem \ref{essential2} fails if $M$ has fundamental group $\Z^n$ but is inessential.
\end{enumerate}
\end{qu}

There is another natural generalisation of Theorem \ref{estimate} from tori to cartesian powers of higher-dimensional spheres. Our previous proof of Filling Lemma \ref{fill} used covering space theory and cannot be generalised to simply connected manifolds. Instead we will use rational homotopy theory.
\begingroup
\def\thesa{\ref{higherdeg}}
\begin{sa}
Let $p\geq 3$ be odd and $n\leq p-2$. Consider $M=(S^p)^n$ or any simply connected, closed manifold of dimension $pn$ with the rational homotopy type $(S^p)_{\Q}^n$ and $N^q$ an arbitrary orientable $q$-manifold. Every continuous map $f\colon M\to N$ admits a point $y\in N$ such that the rank of the restriction homomorphism satisfies \[\rk\left[H^p(M;\Q)\to H^p(f^{-1}y;\Q)\right]\geq n-q\text{.}\]
\end{sa}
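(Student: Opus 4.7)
My plan is to mirror the strategy of Theorem \ref{estimate}, replacing its integral Filling Lemma by a rational analog built from rational homotopy theory.

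First, Proposition \ref{upper} (upper semi-continuity of $\width_p$, which applies since $H^p(M;\Q)$ is finite-dimensional) together with Proposition \ref{genericfine} reduces the theorem---exactly as Theorem \ref{estimate} was reduced to Proposition \ref{essential}---to the following generic statement: for every smooth $f\colon M\to N$ stratum-transverse to a smooth $\Q$-oriented triangulation $\mathcal{T}$ of $N$, some simplex $\sigma\in\mathcal{T}_k$ has preimage inclusion $f_\sigma\colon F_\sigma\hookrightarrow M$ with $\rk H^p(f_\sigma;\Q)\geq n-q$. Arguing by contradiction, I apply Construction \ref{canonical} to obtain the canonical cycle $Z(f,\mathcal{T})\in C_qcl^{pn-q}C_*(M;\Q)$ with non-vanishing class $[Z(f,\mathcal{T})]\in H_qcl^{pn-q}C_*(M;\Q)$, and aim to produce a cone for it in $cl^{pn-q}C_*(M;\Q)$.

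The key new ingredient is a \emph{rational filling lemma}: for every CW complex $K$ and continuous map $k\colon K\to M$ with $\rk H^p(k;\Q)=r<n-q$, one produces a relative CW pair $(\Fill(k),K)$ and an extension $\fil(k)\colon\Fill(k)\to M$ such that $\Fill(k)$ is rationally equivalent to a disjoint union of products $(S^p)^{r_j}$ with each $r_j<n-q$ (hence $H_{\geq pn-q}(\Fill(k);\Q)=0$, since $r_jp\leq p(n-q-1)<pn-q$ for $p\geq 1$), the pair $(\Fill(k),K)$ is rationally $p$-connected, and $\rk H^p(\fil(k);\Q)=\rk H^p(k;\Q)$. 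Where the torus proof used the subgroup $\im\pi_1k\subseteq\Z^n$ and the corresponding covering $T^r\times\R^{n-r}\to T^n$, rational homotopy theory takes over here. Since $p$ is odd, the rationalization $M_\Q$ is the Eilenberg-MacLane space $K(\Q^n,p)$, and the composite $K\to M\to M_\Q$ is classified by $n$ rational classes $\alpha_1,\ldots,\alpha_n\in H^p(K;\Q)$ whose span has dimension $r$. A change of basis of $H^p(M;\Q)$ makes $\alpha_{r+1},\ldots,\alpha_n=0$, so the composite factors rationally through the sub-Eilenberg-MacLane space $K(\Q^r,p)\subset K(\Q^n,p)$, which is rationally $(S^p)^r$. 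I then realize this factorization integrally by attaching to $K$ cells of dimension $\geq p+1$ that kill $\alpha_{r+1},\ldots,\alpha_n$ (thereby preserving rational $p$-connectivity of the pair) and shaping the resulting space via Sullivan's minimal-model machinery so that its rational homotopy type becomes $(S^p)^r$. The hypothesis $n\leq p-2$ is used here to control the Postnikov-type obstructions that could prevent lifting the rational factorization back to $M$ itself rather than only to $M_\Q$.

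Equipped with this filling lemma, the remainder of the proof follows the blueprint of Proposition \ref{essential}: inductively over $k$ and $\sigma\in\mathcal{T}_k$ I build triples $F_\sigma\subset K_\sigma\subset L_\sigma$ with reference maps $f_\sigma,k_\sigma,l_\sigma$ to $M$, where $K_\sigma$ is obtained from $F_\sigma$ by gluing in the previously constructed fillings $L_{\partial_i\sigma}$ of codimension-one faces and $L_\sigma\defeq\Fill(k_\sigma)$. Rational $p$-connectivity of the face pairs transfers to $(K_\sigma,F_\sigma)$, giving $\rk H^p(k_\sigma;\Q)=\rk H^p(f_\sigma;\Q)<n-q$ so that the filling lemma applies. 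The vanishing $H_{\geq pn-q}(L_\sigma;\Q)=0$ solves $\partial\hat w_\sigma=y_\sigma$ in $C_{pn-q+k+1}(L_\sigma;\Q)$ for the cycle $y_\sigma\defeq\sum_{i=0}^k(-1)^i\hat w_{\partial_i\sigma}+(-1)^{k+1}\hat z_\sigma\in C_{pn-q+k}(K_\sigma;\Q)$, and Gluing Lemma \ref{extension} upgrades $\hat w_\sigma$ to $w_\sigma\in(cl^{pn-q}C_*(L_\sigma;\Q))_{k+1}$ with $\partial_iw_\sigma=w_{\partial_i\sigma}$ for $i\leq k$ and $\partial_{k+1}w_\sigma=z_\sigma$. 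Summing, $\partial\sum_{\sigma\in\mathcal{T}_q}w_\sigma=(-1)^{q+1}Z(f,\mathcal{T})$, contradicting $[Z(f,\mathcal{T})]\neq 0$. I expect the main obstacle to be the rational filling lemma itself: the torus case enjoys honest covering spaces, but here the factorization lives only in the rationalized category and must be realized by an actual topological space with a reference map to $M$---presumably exactly where the dimension bound $n\leq p-2$ is required.
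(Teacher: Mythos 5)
The overall shape of your argument—reduce to the generic stratum-transversal case, build the canonical cycle $Z(f,\mathcal{T})$, cone it off using a rational filling lemma—is the right one. But there is a crucial structural mismatch with the paper that creates a genuine gap.

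\textbf{Where the cone lives.} You build the cone directly in $cl^{pn-q}C_*(M;\Q)$, which forces your filling lemma to produce extensions $\fil(k)\colon\Fill(k)\to M$. The paper instead first pushes the canonical cycle forward along the rationalisation map $r_M\colon M\to (S^p)^n_\Q$, observes $[(r_M)_*Z(f,\mathcal{T})]\neq 0$ in $H_qcl^{np-q}C_*((S^p)^n_\Q;\Q)$ because $r_M$ is a rational homology equivalence, and then builds the cone in $cl^{np-q}C_*((S^p)^n_\Q;\Q)$. Accordingly, the Rational Filling Lemma \ref{ratfill} has target $(S^p)^n_\Q$, not $M$. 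This is not a cosmetic choice: a filling lemma with target $M$ appears to be unprovable, and possibly false. If $\Fill(k)$ is to be produced by Sullivan's spatial realisation (or even just has the rational homotopy type $(S^p)^r_\Q$ of a genuine rational space), then $\pi_p\Fill(k)$ is a $\Q$-vector space, so every map $\Fill(k)\to M$ is zero on $\pi_p\otimes\Q$ and hence zero on $H^p(-;\Q)$; this directly contradicts your property $\rk H^p(\fil(k);\Q)=\rk H^p(k;\Q)>0$. Your proposed fix—``realize this factorization integrally by attaching cells'' and ``lifting the rational factorization back to $M$''—is exactly the hard step the paper sidesteps, and you don't address the obstructions to extending $k$ over the attached cells when the target is $M$.

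\textbf{The shape of $\Fill(k)$.} You ask $\Fill(k)$ to be rationally $(S^p)^{r}$ and simultaneously the pair $(\Fill(k),K)$ to be rationally $p$-connected. These two demands conflict whenever $H^{p-1}(K;\Q)\neq 0$: $(S^p)^r$ has no rational cohomology in degree $p-1$, so $H_{p-1}(\iota;\Q)$ cannot be an isomorphism. The paper's construction uses the cdga $\left(\bigwedge\left[H^{p-1}K\oplus\im H^p(k^\sharp)\right],0\right)$, deliberately including degree $p-1$ generators, and proves only the weaker $H^p(\Fill(k),K;\Q)=0$ (enough for the induction); the filling space is \emph{not} rationally a product of $p$-spheres.

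\textbf{Role of $n\leq p-2$.} You assign it to ``control Postnikov-type obstructions'' to lifting. In the paper the bound enters via Lemma \ref{diophantine}: it guarantees that any monomial of total degree $\geq pn-q$ in the generators (of degrees $p-1$ and $p$) must involve at least $n-q$ degree-$p$ generators, so that Lemma \ref{connectedp} forces $H^{\geq pn-q}\iota^\sharp=0$. It is a purely algebraic degree constraint, not an obstruction-theoretic one. This misattribution is a symptom of the larger issue above—the paper never needs to lift back to $M$, so there are no such obstructions to control.

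In short: the student's induction and cone computation mirror the paper correctly, but the filling lemma as you state it (target $M$, $\Fill(k)$ rationally $(S^p)^r$, pair rationally $p$-connected) is not the one the paper proves, and in that form it runs into concrete contradictions. Replacing $M$ by $(S^p)^n_\Q$ throughout, adding degree-$(p-1)$ generators to the model of $\Fill(k)$, and weakening ``$p$-connected'' to $H^p(\Fill(k),K;\Q)=0$ would repair the plan.
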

\addtocounter{sa}{-1}
\endgroup
\begin{rem}
Examples of manifolds $M$ as above that are not $(S^p)^n$ are products of rational homology spheres of dimension $p$ or connected sums of $(S^p)^n$ with rational homology spheres of dimension $pn$.
\end{rem}
In this section the coefficient ring is always $R=\Q$. We assume that the reader already got a rough idea of rational homotopy theory but before we prove the theorem above we will shortly recap the notions and concepts we are going to need (cf. \cite{Felix2005} and \cite{Felix2008}).
\begin{de}[Rationalisations]\label{rational}
For a map $f\colon X\to Y$ between simply connected spaces the following three conditions are equivalent:
\begin{enumerate}[(i)]
\item $\pi_*f\otimes\Q\colon\pi_*X\otimes\Q\to\pi_*Y\otimes\Q$ are isomorphisms
\item $H_*(f;\Q)$ are isomorphisms
\item $H^*(f;\Q)$ are isomorphisms
\end{enumerate}
In this case $f$ is called a \emph{rational homotopy equivalence} which is denoted by
\[\begin{xy}
  \xymatrix{X\ar[r]_{\cong_\Q}^f & Y\text{.}
  }\end{xy}\]
A space $X$ is called \emph{rational} if it is simply connected and all $\pi_*X$ are rational $\Q$-vector spaces. A rational homotopy equivalence between rational spaces is a homotopy equivalence.

For any simply connected $X$ there exists a rational space $X_{\Q}$ and a continuous map $r_X\colon X\to X_{\Q}$ which is a rational homotopy equivalence. The space $X_\Q$ is called the \emph{rationalisation of $X$} and $r_X$ the \emph{rationalisation map of $X$}. With these properties the homotopy type of $X_\Q$ is uniquely determined and is called the \emph{rational homotopy type of $X$}.
\end{de}
\begin{de}[Piecewise polynomial differential forms]
To any topological space $X$ we can associate a \emph{commutative} differential graded algebra (henceforth abbreviated by \emph{cgda}) $A_{PL}(X)\defeq A_{PL}(X;\Q)$. This cgda is called the algebra of \emph{piecewise polynomial differential forms on $X$} and by definition an element $\omega\in A_{PL}^k(X)$ assigns to every singular $n$-simplex in $X$ a \emph{polynomial} degree $k$ differential form on the standard $n$-simplex, consistent with face and degeneracy maps. This yields a contravariant functor $A_{PL}\colon\s\Set\to\cgda$ and there is a natural isomorphism
\begin{align}\label{natural}
H^*A_{PL}(X)\cong H^*(X;\Q)\text{.}
\end{align}
\end{de}
\begin{de}[Sullivan and minimal algebras, minimal models]
A \emph{Sullivan algebra} is a cdga $\left(\bigwedge V,d\right)$ whose underlying algebra is free commutative for some graded $\Q$-vector space $V=\bigoplus_{n\geq 1}V^n$ and such that $V$ admits a basis $(x_{\alpha})$ indexed by a well-ordered set such that $dx_{\alpha}\in\bigwedge(x_{\beta})_{\beta<\alpha}$. It is called a \emph{minimal algebra} if it satisfies the additional property $d(V)\subseteq\bigwedge^{\geq 2}V$.

A morphism of cgdas is called a \emph{quasi-isomorphism} if it induces isomorphisms on all cohomology groups. A quasi-isomorphism \[\left(\bigwedge V,d\right)\to (A,d)\] from a minimal algebra to an arbitrary cgda $(A,d)$ is called a \emph{minimal model} of $(A,d)$. If $X$ is a topological space any minimal model \[\left(\bigwedge V,d\right)\to A_{PL}(X)\] is called a \emph{minimal model of $X$}.
\end{de}
Every simply connected space admits such a minimal model. For any simply connected $X$ the maps $H^*(r_X;\Q)$ are isomorphisms and using (\ref{natural}) we conclude that $A_{PL}(r_X)$ is a quasi-isomorphism. If $m\colon\left(\bigwedge V,d\right)\to A_{PL}X_\Q$ is a minimal model of $X_\Q$ the composition \[\left(\bigwedge V,d\right)\stackrel{m}{\longrightarrow}A_{PL}X_\Q\stackrel{A_{PL}(r_X)}{\longrightarrow}X\] yields a minimal model for $X$.
\begin{ex}[Minimal models of spheres, products]\label{spheres}
\begin{enumerate}[(i)]
\item For the spheres $S_\Q^p$ we can give explicit models depending on the parity of $p$. If $p$ is odd one particular model is given by \[\left(\bigwedge[x],0\right)\to A_{PL}S_\Q^p\] with $\deg x=p$ and $d=0$. If $p$ is even there is a model \[\left(\bigwedge[x,y],d\right)\to A_{PL}S_\Q^p\] with $\deg x=p$, $\deg y=2p-1$, $dx=0$ and $dy=x^2$.
\item If $\left(\bigwedge V,d\right)\to A_{PL}X$ is a minimal model for $X$ and $\left(\bigwedge W,d\right)\to A_{PL}Y$ one for $Y$ then \[\left(\bigwedge[V\oplus W],d\right)\cong\left(\bigwedge V,d\right)\otimes\left(\bigwedge W,d\right)\] is a minimal model for the product $X\times Y$.
\end{enumerate}
\end{ex}
\begin{de}[Spatial realisation]\label{spatial}
There is another contravariant functor $\vert\cdot\vert\colon\cgda\to\Top$, called \emph{spatial realisation}, and for every space $X$ a continuous map \[h_X\colon X\to\vert A_{PL}(X)\vert\text{.}\] These map are called \emph{unit maps} and they are natural in $X$, i.e. for any continuous map $f\colon X\to Y$ the square
\[\begin{xy}
  \xymatrix{
	X\ar[r]^f\ar[d]_{h_X} & Y\ar[d]^{h_Y}\\
	\vert A_{PL}X\vert\ar[r]_{\vert A_{PL}f\vert} & A_{PL}Y
  }\end{xy}\]
commutes.
\end{de}
\begin{sa}\label{unitcomp}
The unit maps $h_X$ are always \emph{rational homology equivalences}, i.e. $H_*(h_X;\Q)$ (or equivalently $H^*(h_X;\Q)$) are isomorphisms. For any rational space $X_\Q$ and any minimal model $m\colon\left(\bigwedge V,d\right)\to A_{PL}X_\Q$ the maps \[h_{X_\Q}\colon X_\Q\stackrel{\simeq}{\longrightarrow}\vert A_{PL}X_\Q\vert\] and \[\vert m\vert\colon\vert A_{PL}X_\Q\vert\stackrel{\simeq}{\longrightarrow}\left\vert\bigwedge V,d\right\vert\] are homotopy equivalences.
\end{sa}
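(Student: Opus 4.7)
The plan is to recognise the statement as a consolidation of several classical results from Sullivan's PL de Rham theory, and to assemble the three assertions from the standard references rather than redevelop the machinery. In particular I will lean heavily on \cite{Felix2005}.

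For the first claim, that $h_X$ is always a rational cohomology equivalence, my approach is to exploit that $(A_{PL},|\cdot|)$ forms an adjoint pair between $\sSet$ (applied to spaces via the singular complex) and $\cgda^{\mathrm{op}}$, with $h_X$ as the corresponding unit. A triangle identity then writes the identity of $A_{PL}X$ as the composite
\[A_{PL}X\xrightarrow{\eta_{A_{PL}X}}A_{PL}|A_{PL}X|\xrightarrow{A_{PL}(h_X)}A_{PL}X\text{,}\]
so that $A_{PL}(h_X)$ retracts $\eta_{A_{PL}X}$. Granted the classical fact that the canonical comparison $A\to A_{PL}|A|$ is a quasi-isomorphism on any Sullivan (in particular cofibrant) cdga, and combining with the natural isomorphism $H^*A_{PL}(\,\cdot\,)\cong H^*(\,\cdot\,;\Q)$ from (\ref{natural}), I would conclude that $A_{PL}(h_X)$, and hence $h_X$, induces an isomorphism on $H^*(\,\cdot\,;\Q)$. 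The homology version follows by universal coefficients over $\Q$.

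For the remaining two assertions I would invoke the rational Whitehead theorem (compare the equivalent conditions in Definition \ref{rational}): any rational cohomology equivalence between simply connected rational spaces is a genuine homotopy equivalence. Both $|A_{PL}X_{\Q}|$ and $\left|\bigwedge V,d\right|$ are simply connected rational spaces, because the spatial realisation functor produces CW complexes whose rational homotopy groups are computed from the indecomposables of a Sullivan model and which, being realisations of cdgas of finite $\Q$-type, have their $\pi_*$ automatically $\Q$-vector spaces. Thus the first claim applied to $X_{\Q}$ exhibits $h_{X_{\Q}}$ as a rational cohomology equivalence between simply connected rational spaces, hence a homotopy equivalence; and $|m|$ is a rational homotopy equivalence because $|\cdot|$ sends quasi-isomorphisms between such cdgas to rational equivalences, which the rational Whitehead theorem again promotes to an honest homotopy equivalence.

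The main obstacle is bibliographic rather than mathematical: the argument merely marshals four well-known inputs --- naturality of (\ref{natural}); the quasi-isomorphism $A\to A_{PL}|A|$ for Sullivan $A$; the minimal-model description of $\pi_*$ of spatial realisations; and the rational Whitehead theorem --- all stated with full proofs in \cite{Felix2005}. Since the statement is invoked here only as background for the rational homotopy proof of Theorem \ref{higherdeg}, reducing it to the existing literature is both sufficient and appropriate; no independently new argument is required.
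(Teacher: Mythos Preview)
The paper itself gives no proof of this statement: it is stated as a background fact from rational homotopy theory (with \cite{Felix2005} and \cite{Felix2008} as blanket references) and is immediately followed by ``Now we can start proving Theorem \ref{higherdeg}.'' So your instinct that the content is bibliographic rather than original is exactly right, and in that sense your proposal matches the paper's treatment.

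That said, your sketched argument for the first assertion contains a genuine slip. You invoke the triangle identity to factor $\id_{A_{PL}X}$ as $A_{PL}(h_X)\circ\eta_{A_{PL}X}$ and then appeal to the fact that $A\to A_{PL}|A|$ is a quasi-isomorphism for Sullivan $A$. But $A_{PL}X$ is almost never a Sullivan algebra, so that input does not apply to $\eta_{A_{PL}X}$ directly. The standard route in \cite{Felix2005} (Chapter 17) instead passes through a Sullivan model $m\colon(\bigwedge V,d)\to A_{PL}X$: one knows $(\bigwedge V,d)\to A_{PL}\left|\bigwedge V,d\right|$ is a quasi-isomorphism, and then compares $X$ with $|A_{PL}X|$ and $\left|\bigwedge V,d\right|$ simultaneously, under finite-type and simple-connectivity hypotheses which the paper is tacitly assuming. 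Your conclusion is correct, but the path you wrote down needs this detour; as stated, the step ``$\eta_{A_{PL}X}$ is a quasi-isomorphism because $A_{PL}X$ is Sullivan'' does not go through.
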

Now we can start proving Theorem \ref{higherdeg}.
\begin{lm}\label{connectedp}
Let $\left(\bigwedge[x_1,\ldots,x_n],0\right)$ be the minimal cgda with all generators concentrated in degree $p$ and $(A,d)$ an arbitrary cgda. Let \[f\colon\left(\bigwedge[x_1,\ldots,x_n],0\right)\to (A,d)\] be a morphism of cdgas such that the induced map on degree $p$ cohomology \[f_*\colon H^p\left(\bigwedge[x_1,\dots,x_n],0\right)\to H^p(A,d)\] has rank less than $n-q$. Then the induced map $f_*$ vanishes in all degrees greater or equal than $(n-q)p$.
\end{lm}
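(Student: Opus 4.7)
The plan is to exploit that $p$ is odd (as throughout the treatment of Theorem~\ref{higherdeg}), so that the underlying algebra $\bigwedge[x_1,\ldots,x_n]$ is genuinely the exterior algebra on $n$ odd-degree generators and, because $d=0$, its cohomology coincides with the algebra itself. In particular $H^p\bigl(\bigwedge[x_1,\ldots,x_n],0\bigr)$ is the $n$-dimensional $\Q$-vector space spanned by $x_1,\ldots,x_n$, and the hypothesis becomes that the vectors $v_i\defeq f_*(x_i)\in H^p(A,d)$ span a subspace $V$ of dimension $r\leq n-q-1$.

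First I would record that a basis of the degree $kp$ part of $\bigwedge[x_1,\ldots,x_n]$ is given by the monomials $x_{i_1}\cdots x_{i_k}$ with $1\leq i_1<\cdots<i_k\leq n$, so that every class in degree $m\geq (n-q)p$ is a linear combination of monomials with $k\geq n-q$. Since $f$ is a morphism of cgdas and multiplication on cohomology is induced from that of $A$, one has $f_*(x_{i_1}\cdots x_{i_k})=v_{i_1}\cdots v_{i_k}$ in $H^{kp}(A,d)$, so it suffices to show that all such products vanish whenever $k\geq n-q$.

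The core observation is that any product $v_{j_1}\cdots v_{j_k}$ in $H^*(A,d)$ with $k>r$ vanishes. Indeed, since the $v_{j_s}$ all lie in odd degree $p$, graded commutativity of $H^*(A,d)$ forces them to pairwise anticommute and gives $v^2=0$ for every odd-degree class $v$ (using $\mathrm{char}\,\Q=0$). Any $k>r$ vectors in the $r$-dimensional subspace $V$ are $\Q$-linearly dependent; writing $v_{j_k}$ as a $\Q$-combination of the previous $v_{j_s}$ and distributing then produces a sum of terms each of which contains a repeated odd-degree factor, hence is zero. Applying this with $k\geq n-q>r$ shows that $f_*$ sends every monomial basis element of degree $\geq (n-q)p$ to zero and therefore vanishes in those degrees.

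I do not foresee any real obstacle: the argument is a textbook use of the anticommutativity of odd-degree classes in a graded commutative $\Q$-algebra, and the numerical input $r\leq n-q-1$ together with $k\geq n-q$ is precisely what makes the linear dependence bite. The only mildly delicate point is to notice that the lemma tacitly requires $p$ to be odd, since for even $p$ the generators no longer square to zero and the conclusion fails (e.g.\ for $n=2$, $q=0$ with $f_*(x_1)=f_*(x_2)$ a nonzero class whose square survives in $A$).
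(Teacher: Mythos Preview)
Your argument is correct and essentially identical to the paper's proof: both reduce to showing that a length-$(n-q)$ monomial $x_{i_1}\cdots x_{i_{n-q}}$ maps to zero by using linear dependence of the images $[fx_{i_j}]$ to replace one factor by a combination of the others, whereupon each resulting term contains a repeated odd-degree factor and vanishes. Your write-up is in fact slightly more explicit than the paper's, which suppresses the ``$p$ odd $\Rightarrow v^2=0$'' step; your closing remark that the lemma tacitly requires $p$ odd is well observed.
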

\begin{proof}
The statement is non-vacuous only in degrees divisible by $p$, i.e. $lp$ with $l\geq n-q$. It suffices to prove the case $l=n-q$. Consider an arbitrary monomial of length $n-q$, without loss of generality $x_1\cdot\dots\cdot x_{n-q}$. Due to the rank assumption there exists one factor, without loss of generality $x_{n-q}$, such that $\left[f x_{n-q}\right]$ can be expressed as \[\left[f x_{n-q}\right]=\sum_{i<n-q}\lambda_i\left[f x_i\right]\] and hence

\begin{align*}
\left[f(x_1\cdot\ldots\cdot x_{n-q})\right]=&\left[f x_1\right]\cdot\ldots\cdot \left[f x_{n-q}\right]\\
=&\left[f x_1\right]\cdot\ldots\cdot \left[f x_{n-q-1}\right]\cdot\sum_{i<n-q}\lambda_i\left[f x_i\right]=0\text{.\qedhere}\\
\end{align*}
\end{proof}
\begin{lm}\label{diophantine}
Let $n\leq p-2$. For any $0\leq a<n$ the linear diophantine equation
\begin{align}\label{diop}
\lambda(p-1)+\mu p=np-a
\end{align}
has exactly one solution $(\lambda,\mu)\in\Z_{\geq 0}^2$ given by $(\lambda,\mu)=(a,n-a)$.
\end{lm}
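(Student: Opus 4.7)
The plan is to establish existence by direct substitution and then nail down uniqueness by a mod $p$ argument that is then refined using the hypothesis $n \leq p-2$. Existence is immediate: substituting $(\lambda,\mu) = (a, n-a)$ into the left-hand side of \eqref{diop} gives $a(p-1) + (n-a)p = ap - a + np - ap = np - a$, and $n - a > 0$ since $a < n$, so $(a,n-a) \in \Z_{\geq 0}^2$.

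For uniqueness, I would reduce \eqref{diop} modulo $p$. Since $\mu p \equiv 0$ and $np \equiv 0 \pmod p$, the equation forces $-\lambda \equiv -a \pmod p$, i.e.\ $\lambda \equiv a \pmod p$. Combined with $\lambda \geq 0$ and $0 \leq a < p$ (which follows from $a < n \leq p-2 < p$), this means $\lambda = a + kp$ for some integer $k \geq 0$. Substituting back and solving for $\mu$ yields
\[
\mu = n - a - k(p-1).
\]

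The final step, which is the only spot where the hypothesis $n \leq p-2$ is used, is to rule out every $k \geq 1$. For such $k$ we have $k(p-1) \geq p - 1 \geq n + 1 > n - a$, so $\mu < 0$ and $(\lambda, \mu)$ is not a solution in $\Z_{\geq 0}^2$. Hence only $k = 0$ survives, giving the claimed unique solution $(\lambda, \mu) = (a, n-a)$.

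I do not anticipate any real obstacle here; the argument is essentially a one-line congruence followed by a size estimate. The only point requiring any attention is the bookkeeping of the inequality chain $k(p-1) \geq p-1 \geq n+1 > n - a$, which is precisely where the assumption $n \leq p - 2$ (equivalently $p - 1 \geq n + 1$) is invoked, and which explains why the lemma fails without that hypothesis.
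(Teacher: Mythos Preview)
Your proof is correct and essentially matches the paper's approach: both parametrise the integer solutions as $(\lambda,\mu)=(a+kp,\,(n-a)-k(p-1))$ and then use the hypothesis $n\leq p-2$ to show that only $k=0$ gives a nonnegative pair. The paper phrases the last step as an interval-length estimate $\frac{n-a}{p-1}+\frac{a}{p}\leq\frac{n}{p-1}<1$, whereas you reach the parametrisation via reduction mod $p$ and then rule out $k\geq 1$ by the chain $k(p-1)\geq p-1\geq n+1>n-a$; these are cosmetic variations of the same argument.
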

\begin{proof}
The integer solutions of (\ref{diop}) are parametrised by \[\left\{(\lambda,\mu)=(a+kp,(n-a)-k(p-1))\middle\vert k\in\Z\right\}\text{.}\] Then the additional requirement $\lambda,\mu\geq 0$ translates into
\begin{align}\label{diop2}
-\frac{a}{p}\leq k\leq\frac{n-a}{p-1}\text{.}
\end{align}
Since \[\frac{n-a}{p-1}-\left(-\frac{a}{p}\right)\leq\frac{n-a}{p-1}+\frac{a}{p-1}=\frac{n}{p-1}<1\] inequality (\ref{diop2}) has at most one solution. It is easy to check that $(a,n-a)$ satisfies all desired properties.
\end{proof}
The lemma above will enable us to prove the following rational version of Filling Lemma~\ref{fill}.
\begin{lm}[Rational Filling Lemma]\label{ratfill}
Let $p\geq 3$ be odd, $n\leq p-2$, $q<n$ and $k\colon K\to (S^p)_{\Q}^n$ a continuous map with $H^p(k;\Q)<n-q$. There exists a relative CW complex $(\Fill(k),K)$ and an extension $\fil(k)\colon\Fill(k)\to(S^p)_{\Q}^n$ such that the diagram
\[\begin{xy}\xymatrix{
	\Fill(k)\ar[rd]^{\fil(k)} & \\
	K\ar@{^{(}->}[u]^{\iota}\ar[r]_k & (S^p)_{\Q}^n
  }\end{xy}\]
commutes and the following properties hold.
\begin{enumerate}[(i)]
\item $H_{\geq np-q}(\iota;\Q)=0$
\item $H^p(\Fill(k),K;\Q)=0$
\item $\rk H^p(\fil(k);\Q)=\rk H^p(k;\Q)<n-q$
\end{enumerate}
\end{lm}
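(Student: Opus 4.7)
The plan is to mirror the structure of Filling Lemma \ref{fill} but to replace its covering-space ingredient with the Eilenberg--MacLane description of the target available for $p$ odd. Example \ref{spheres}(i) identifies $S^p_\Q\simeq K(\Q,p)$, and hence $(S^p)_\Q^n\simeq K(\Q^n,p)$; the key consequence is that for every $i\neq p$ one has $[S^i,(S^p)_\Q^n]=H^p(S^i;\Q)^n=0$, so every continuous map $S^i\to(S^p)_\Q^n$ with $i\neq p$ is null-homotopic.

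With this at hand, I would define $\Fill(k)$ by attaching cells to $K$ to kill rational homology above a prescribed dimension: for each $i\geq np-q$, pick representatives of a $\Q$-spanning set of $H_i(K;\Q)$ and attach an $(i+1)$-cell to $K$ along each representative. Under the hypotheses $q<n\leq p-2$ and $p\geq 3$, the dimension $np-q$ satisfies $np-q>p$ as soon as $n\geq 2$ (because then $(n-1)p\geq p>n-1\geq q$); in the degenerate case $n=1$ the rank assumption forces $r=0$ and hence $k$ to be rationally null-homotopic, so one may take $\Fill(k)$ to be the cone on $K$. Consequently every attaching sphere has dimension $\geq np-q>p$, so by the preceding observation each composite $S^i\to K\xrightarrow{k}(S^p)_\Q^n$ is null-homotopic. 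This allows one to extend $k$ cell-by-cell, yielding $\fil(k)\colon\Fill(k)\to(S^p)_\Q^n$.

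The three properties then follow essentially by construction. For (i), every class in $H_i(K;\Q)$ with $i\geq np-q$ becomes a boundary in $\Fill(k)$ by the attached $(i+1)$-cell, so $H_i(\iota;\Q)=0$. For (ii), all attached cells have dimension $\geq np-q+1>p$, so the relative cellular cochain complex vanishes in degree $p$ (in fact in all degrees $\leq p$) and $H^p(\Fill(k),K;\Q)=0$. For (iii), the long exact sequence of the pair together with (ii) shows that $H^p(\iota;\Q)$ is injective, and the factorization $H^p(k;\Q)=H^p(\iota;\Q)\circ H^p(\fil(k);\Q)$ then yields the equality of ranks.

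The principal obstacle, relative to the torus version, is the absence of covering-space theory for a simply connected target; the Eilenberg--MacLane identification of $(S^p)_\Q^n$ is the crucial replacement, and it is precisely this identification which demands $p$ odd. For even $p$ the minimal model of $S^p_\Q$ acquires an extra generator in degree $2p-1$, and an analogous construction would instead have to contend with the arithmetic encoded in Lemma \ref{diophantine}.
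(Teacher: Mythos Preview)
Your identification of $(S^p)_\Q^n\simeq K(\Q^n,p)$ for odd $p$ is exactly the right replacement for covering-space theory, and your obstruction-theoretic extension argument (any $S^i\to K(\Q^n,p)$ with $i\neq p$ is null) is sound. The gap is in the construction of $\Fill(k)$ itself, specifically in securing property~(i).

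You propose to attach $(i+1)$-cells along ``representatives'' of a $\Q$-spanning set of $H_i(K;\Q)$ for $i\geq np-q$. But a rational homology class need not be \emph{spherical}: the rational Hurewicz map $\pi_i(K)\otimes\Q\to H_i(K;\Q)$ is in general not surjective (e.g.\ $H_4(\mathbb{CP}^2;\Q)=\Q$ while $\pi_4(\mathbb{CP}^2)\otimes\Q=0$, as one reads off from the minimal model $\bigwedge(x_2,y_5)$). Attaching a cell along $S^i\to K$ only kills the Hurewicz image of that sphere, so non-spherical classes survive and $H_i(\iota;\Q)$ need not vanish. Replacing spheres by general $i$-dimensional complexes $P$ does not help either: extending $k$ over the cone on $P$ requires $(k\circ\phi)^*\iota_p=0\in H^p(P;\Q^n)$, and there is no reason the chosen $\phi\colon P\to K$ should pull $k^*\iota_p$ back to zero.

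The paper avoids this obstacle by working on the algebraic side. It factors the model $k^\sharp\colon\bigwedge[x_1,\dots,x_n]\to A_{PL}K$ through the free cdga $\bigwedge\bigl[H^{p-1}K\oplus\im H^p(k^\sharp)\bigr]$; the factor $H^{p-1}K$ is inserted precisely to force $H^{p-1}(\iota^\sharp)$ surjective, which is what delivers~(ii). The vanishing~(i) is then a purely algebraic computation using the Diophantine Lemma~\ref{diophantine} and Lemma~\ref{connectedp}, after which one applies spatial realization and a mapping-cylinder/CW-approximation to obtain the pair $(\Fill(k),K)$. In space-level language this amounts to factoring $k$ through $K\bigl(H^{p-1}(K;\Q),p-1\bigr)\times K\bigl(\im H^p(k;\Q),p\bigr)$ and taking $\Fill(k)$ to be the mapping cylinder of that map---so your Eilenberg--MacLane viewpoint is morally correct, but the right move is a factorization through a carefully chosen target, not cell attachment to~$K$.
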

\begin{proof}
The proof strategy is to solve the problem on the algebraic level of cgdas and then use spatial realisation to obtain the desired spaces and maps. Since $p$ is odd we have a minimal model \[\left(\bigwedge[x_1,\ldots,x_n],0\right)\to A_{PL}(S^p)_{\Q}^n\] with generators $x_i$ concentrated in degree $p$. Consider $k^{\sharp}$ given by the following diagram.
\begin{align}\label{diag2}
\begin{xy}
  \xymatrix{
	A_{PL}K & A_{PL}(S^p)_{\Q}^n\ar[l]_{A_{PL}k}\\
	 & \left(\bigwedge[x_1,\ldots,x_n],0\right)\ar[lu]^{k^{\sharp}}\ar[u]
  }\end{xy}
\end{align}
Morphisms between cdgas are denoted with a lower case letter endowed with the superindex~$\sharp$. This notation shall hint at which continuous map we will get after spatial realisation. The map $k^\sharp$ can be factored as follows.
\begin{align}\label{diag}
\begin{xy}
  \xymatrix{
      A_{PL}K & \\
      \left(\bigwedge\left[H^{p-1}K\oplus\im H^p(k^{\sharp})\right],0\right)\ar[u]^{\iota^{\sharp}} & (\bigwedge[x_1,\ldots,x_n],0)\ar[lu]_{k^{\sharp}}\ar[l]^/-5mm/{g^{\sharp}}
			}\end{xy}
\end{align}
The morphism $g^\sharp$ is the obvious one. The map $\iota^\sharp$ can be defined by \emph{choosing representing cocycles}, i.e. choose $y_i\in A_{PL}^{p-1}K$ such that $[y_i]$ constitutes a basis of $H^{p-1}(A_{PL}K)$ and define
\begin{align*}
\iota^\sharp\colon\left(\bigwedge\left[H^{p-1}K\oplus\im H^p(k^{\sharp})\right],0\right)&\to A_{PL}K\\
[y_i]&\mapsto y_i\\
H^p(k^\sharp)[x_i]&\mapsto k^\sharp x_i\text{.}
\end{align*}
With this definition $H^{p-1}\iota^\sharp$ is surjective and $H^p\iota^\sharp$ is injective both of which will in due course imply (ii) and (iii).

We are left to prove (i) which is equivalent to $H^{\geq pn-q}\iota^\sharp=0$. For $0\leq a\leq q<n$ consider a degree $pn-a$ element $x\in\bigwedge\left[H^{p-1}K\oplus\im H^p(k^{\sharp})\right]$. We will show that $H^{pn-a}\iota^\sharp[x]=0\in H^pA_{PL}K$. Without loss of generality $x$ is a product of $\lambda$ generators of degree $(p-1)$ and $\mu$ generators of degree $p$. Since $n\leq p-2$ Lemma \ref{diophantine} yields $(\lambda,\mu)=(a,n-a)$. Thus $x$ contains at least $n-q$ generators of degree $p$, i.e. \[x=yz_1\cdot\ldots\cdot z_{n-q}\] and the $z_i$ can be written as $z_i=g^\sharp w_i$ for some $w_i\in[x_1,\ldots,x_n]$. We conclude
\begin{align*}
H^{pn-a}\iota^\sharp[x]=[\iota^\sharp x]=[\iota^\sharp(yz_1\cdot\ldots\cdot z_{n-q})]=[(\iota^\sharp y)(\iota^\sharp z_1)\cdot\ldots\cdot(\iota^\sharp z_{n-q})]\\
=[(\iota^\sharp y)(\iota^\sharp g^\sharp w_1)\cdot\ldots\cdot(\iota^\sharp g^\sharp w_{n-q})]=[\iota^\sharp y]H^{(n-q)p}k^\sharp[w_1\cdot\ldots w_{n-q}]\text{.}
\end{align*}
Using the natural isomorphism $H^*A_{PL}(X)\cong H^*(X;\Q)$ we get that $\rk H^p(k^\sharp)<n-q$. Hence we can apply the Lemma \ref{connectedp} to conclude $H^{\geq (n-q)p}(k^\sharp)=0$ proving $H^{pn-a}\iota^\sharp[x]=0$.

Let \[\left(\bigwedge W,0\right)\defeq \left(\bigwedge\left[H^{p-1}K\oplus\im H^p(k^{\sharp})\right],0\right)\text{.}\] After spatial realisation of diagrams (\ref{diag2}) and (\ref{diag}) and introducing the unit maps from Repetition \ref{unitcomp} we get the following diagram.
\[\begin{xy}
  \xymatrix{K\ar[r]^k\ar[d]_{h_K} & (S^p)_{\Q}^n\ar[d]_{h_{(S^p)_{\Q}^n}}^{\simeq}\\
	\vert A_{PL}K\vert\ar[r]\ar[d]_{\vert\iota^\sharp\vert} & \left\vert A_{PL}(S^p)_{\Q}^n\right\vert\ar[d]^{\simeq}\\
	\vert\bigwedge W,0\vert\ar[r]_/-3mm/{\vert g^\sharp\vert} & \vert\bigwedge[x_1,\ldots,x_n],0\vert
	}\end{xy}\]
In this diagram the lower square commutes strictly but the upper one only up to homotopy. The map $h_K$ is a rational cohomology equivalence, in particular we still have that $H^{p-1}(\vert\iota^\sharp\vert\circ h_K)$ is surjective and $H^p(\vert\iota^\sharp\vert\circ h_K)$ is injective. The same theorem states that the right hand side vertical arrows are homotopy equivalences. After choosing homotopy inverses we get the triangle
\[\begin{xy}
  \xymatrix{K\ar[r]^k\ar[d]_{\vert\iota^\sharp\vert\circ h_K} & (S^p)_{\Q}^n\\
	\left\vert\bigwedge W,0\right\vert\ar[ru]_{\widehat{g}}
		}\end{xy}\]
which commutes up to homotopy. Choose such a homotopy $H\colon\widehat{g}\circ(\vert\iota^\sharp\vert\circ h_K)\simeq k$ and consider the mapping cylinder of $\vert\iota^\sharp\circ h_K\vert$.

\begin{center}
\def\svgwidth{0.6\textwidth}
\begingroup%
  \makeatletter%
  \providecommand\color[2][]{%
    \errmessage{(Inkscape) Color is used for the text in Inkscape, but the package 'color.sty' is not loaded}%
    \renewcommand\color[2][]{}%
  }%
  \providecommand\transparent[1]{%
    \errmessage{(Inkscape) Transparency is used (non-zero) for the text in Inkscape, but the package 'transparent.sty' is not loaded}%
    \renewcommand\transparent[1]{}%
  }%
  \providecommand\rotatebox[2]{#2}%
  \ifx\svgwidth\undefined%
    \setlength{\unitlength}{723.25752563bp}%
    \ifx\svgscale\undefined%
      \relax%
    \else%
      \setlength{\unitlength}{\unitlength * \real{\svgscale}}%
    \fi%
  \else%
    \setlength{\unitlength}{\svgwidth}%
  \fi%
  \global\let\svgwidth\undefined%
  \global\let\svgscale\undefined%
  \makeatother%
  \begin{picture}(1,0.48653324)%
    \put(0,0){\includegraphics[width=\unitlength,page=1]{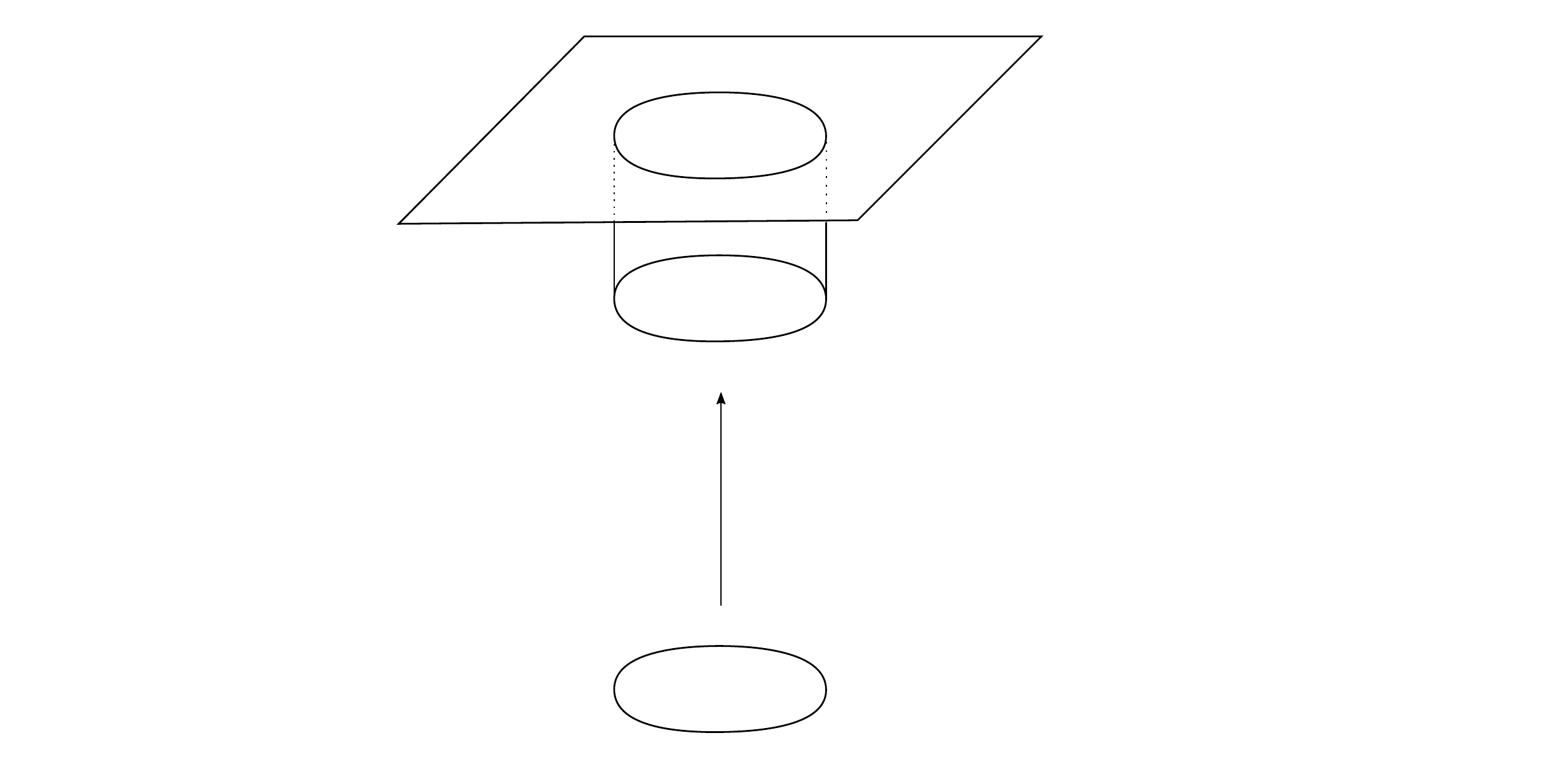}}%
    \put(-0.00137723,0.33111021){\color[rgb]{0,0,0}\makebox(0,0)[lb]{\smash{$M(\vert\iota^\sharp\vert\circ h_K)$}}}%
    \put(0.68598904,0.43540027){\color[rgb]{0,0,0}\makebox(0,0)[lb]{\smash{$\left\vert\bigwedge W,0\right\vert$}}}%
    \put(0.33203494,0.03325152){\color[rgb]{0,0,0}\makebox(0,0)[lb]{\smash{$K$}}}%
    \put(0,0){\includegraphics[width=\unitlength,page=2]{cylinder.pdf}}%
    \put(0.85980584,0.04352244){\color[rgb]{0,0,0}\makebox(0,0)[lb]{\smash{$(S^p)_{\Q}^n$}}}%
    \put(0,0){\includegraphics[width=\unitlength,page=3]{cylinder.pdf}}%
    \put(0.71127152,0.25210261){\color[rgb]{0,0,0}\makebox(0,0)[lb]{\smash{$\overline{g}$}}}%
    \put(0.65122573,0.00243851){\color[rgb]{0,0,0}\makebox(0,0)[lb]{\smash{$k$}}}%
  \end{picture}%
\endgroup%

\end{center}

Using the homotopy $H$ we get a map $\overline{g}$ such that the diagram
\[\begin{xy}
  \xymatrix{
	M(\vert\iota^\sharp\vert\circ h_K)\ar[rd]^{\overline{g}} & \\
	K\ar@{^{(}->}[u]\ar[r]_k & (S^p)_{\Q}^n
}\end{xy}\]
commutes strictly. Choose a relative CW approximation $(\Fill(k),K)\to(M(\vert\iota^\sharp\vert\circ h_K),K)$, i.e. a relative CW complex $(\Fill(k),K)$ together with a map $\Fill(k)\to M(\vert\iota^\sharp\vert\circ h_K)$ which is a homotopy equivalence and restricts to the identity on $K$. Define $\iota$ and $\fil(k)$ as in the following diagram.
\[\begin{xy}\xymatrix{
\Fill(k)\ar@/^10mm/[rr]^{\fil(k)}\ar[r]^/-2mm/{\simeq} & M(\vert\iota^\sharp\vert\circ h_K)\ar[r] & (S^p)_{\Q}^n\\
 & K\ar@{^{(}->}[lu]^{\iota}\ar[u]\ar[ru]_k &
  }\end{xy}\]
The induced map $H^{p-1}(\iota;\Q)$ is surjective and $H^p(\iota;\Q)$ is injective since $\vert\iota^\sharp\vert\circ h_K$ has these properties. From this we get $H^p(\Fill(k),K;\Q)=0$ hence $H_p(\Fill(k),K;\Q)=0$. As usual we successively conclude that $H_p(\iota;\Q)$ is surjective and $\rk H^p(\fil(k);\Q)=\rk H^p(k;\Q)$.
\end{proof}
\begin{rem}
\begin{enumerate}[(i)]
\item In the case $p=1$ the factorisation (\ref{diag}) reminds us of our original Filling Lemma \ref{fill}.
\item The condition $n\leq p-2$ seems a little inorganic. But in the case $n=p-1$ the element $x$ could be of degree $np$ and therefore the product of $p$ generators of degree $(p-1)$ and we would not have any control over the image $H^{np}\iota^\sharp[x]$. We do not know how to weaken this condition. This may be possible by altering the construction of the Rational Filling Lemma.
\item If $p$ is even a minimal model of $(S^p)^n$ is given by $\left(\bigwedge[x_1,\ldots,x_n,y_1,\ldots,y_n],d\right)$ with $dy_i=x_i^2$. However it is not clear what the image of $y_i$ under the map $g^\sharp$ should be such that diagram (\ref{diag}) commutes or how to alter the construction.
\item It is remarkable that the Rational Filling Lemma can be proven while almost exclusively manipulating algebraic objects.
\end{enumerate}
\end{rem}
\begin{proof}[Proof of Theorem \ref{higherdeg}]
We will only indicate how to change the existing proof scheme. Again we proceed by contradiction and assume that $N$ connected and closed, there exists a smooth $f\colon M^{np}\to N^q$ and a triangulation $\mathcal{T}$ of $N$ such that the following two properties hold:
\begin{enumerate}[(i)]
\item The smooth simplices of $\mathcal{T}$ intersect $f$ stratum transversally.
\item For every $\sigma\in\mathcal{T}_k$ the inclusion $f_{\sigma}\colon F_{\sigma}\defeq f^{-1}\sigma(\Delta^k)\hookrightarrow M$ satisfies \[\rk H^p(f_{\sigma};\Q)<n-q\text{.}\]
\end{enumerate}
Again for every $\sigma\in\mathcal{T}_k$ we consider the simplices $z_{\sigma}\in\left(cl^{np-q}C_*(F_\sigma;\Q)\right)_k$ and the canonical cycle $Z(f;\mathcal{T})\in C_qcl^{np-q}C_*(M;\Q)$ from Construction \ref{canonical}. Let $r_M\colon M\to(S^p)_{\Q}^n$ be the rationalisation map of $M$. The diagram
\[\begin{xy}
  \xymatrix{
	Z(f;\mathcal{T})\ar@{|->}[rrr]^{\left(r_M\right)_*}\ar@{|->}[ddd]_{\ev_q} & & & \left(r_M\right)_*Z(f;\mathcal{T})\ar@{|->}[ddd]^{\ev_q}\\
	 & C_\bullet cl^{np-q}C_*(M;\Q)\ar[r]\ar[d] & C_\bullet cl^{np-q}C_*((S^p)_{\Q}^n;\Q)\ar[d] & \\
	 & C_{(np-q)+*}(M;\Q)\ar[r] & C_{(np-q)+*}((S^p)_{\Q}^n;\Q) & \\
	\widehat{Z(f;\mathcal{T})}\ar@{|->}[rrr]_{\left(r_M\right)_*} & & & \left(r_M\right)_*\widehat{Z(f;\mathcal{T})}
  }\end{xy}\]
commutes. The bottom left cycle represents the fundamental class $[M]_\Q\in H^{np}(M;\Q)$ and by definition $r_M$ is a rational homology equivalence, in particular the bottom right cycle defines a non-zero element in $H_{np}((S^p)_{\Q}^n;\Q)$. Therefore the top right cycle defines a non-zero element in $H_qcl^{np-q}C_*((S^p)_{\Q}^n;\Q)$.

Now we can use the Rational Filling Lemma \ref{ratfill}, proceed as earlier and deduce a contradiction by constructing a cone of $\left(r_M\right)_*Z(f;\mathcal{T})$ in $cl^{np-q}C_*((S^p)_{\Q}^n;\Q)$ via simplices $w_{\sigma}\in\left(cl^{np-q}C_*((S^p)_{\Q}^n;\Q)\right)_{q+1}$ satisfying \[\del_iw_{\sigma}=\begin{cases}w_{\del_i\sigma}&\text{if }0\leq i\leq k\\z_{\sigma}&\text{if }i={k+1}\text{.}\end{cases}\] For $\sigma\in\mathcal{T}_0$ and $i=0$ the equation above shall be interpreted as $\del_0w_{\sigma}=w_{\del_0\sigma}=0$.
\end{proof}
\bibliographystyle{abbrv}
\bibliography{algfill}
\end{document}